\newtheorem{theorem}{Theorem}%[section]
\newtheorem{remark}[theorem]{Remark}%[section]
\newtheorem{lemma}[theorem]{Lemma}%[section]
\newtheorem{proposition}[theorem]{Proposition}%[section]
\newtheorem{corollary}[theorem]{Corollary}%[section]
\newtheorem{problem}[theorem]{Problem}%[section]
\numberwithin{equation}{section}
\renewcommand{\phi}{\varphi}
\renewcommand{\leq}{\leqslant}
\renewcommand{\geq}{\geqslant}
\begin{document}
\title[Combinatorial ideals for 
affine Lie algebras]{Combinatorial 
analogues of ad-nilpotent ideals for 
untwisted affine Lie algebras}
\author{Karin Baur and Volodymyr Mazorchuk}
\date{\today}

\begin{abstract}
We study certain types of ideals in the standard
Borel subalgebra of an untwisted affine Lie algebra.
We classify these ideals in terms of the root
combinatorics and give an explicit formula
for the number of such ideals in type $A$.
The formula involves various aspects of
combinatorics of Dyck paths and leads to a 
new interesting integral sequence.
\end{abstract}
\maketitle

\section{Introduction, motivation and description of the results}\label{s1}

Let $\mathfrak{g}$ be a simple finite dimensional 
complex Lie algebra with a fixed triangular decomposition
$\mathfrak{g}=\mathfrak{n}_-\oplus\mathfrak{h}
\oplus\mathfrak{n}_+$ and $\mathfrak{b}=
\mathfrak{h}\oplus\mathfrak{n}_+$ the corresponding Borel
subalgebra. An ideal $\mathfrak{i}$ of $\mathfrak{b}$ 
is called
{\em ad-nilpotent} if the adjoint action on $\mathfrak{b}$
of every element from $\mathfrak{i}$ is nilpotent.

It is easy to see that ad-nilpotent ideals of
$\mathfrak{b}$ are all contained in $\mathfrak{n}_+$.
Furthermore, as $\mathfrak{h}$ acts semi-simply on
$\mathfrak{n}_+$ with one-dimensional roots spaces,
any ad-nilpotent ideal $\mathfrak{i}$ decomposes into
a direct sum of (some of these) root spaces. It follows
that the number of such ideals is finite. Various
classification and enumeration problems related
to ad-nilpotent ideals in the above situation were
studied in \cite{CP1,CP2,CDR,KOP,AKOP,Pa}, see also references
therein.

The aim of the present paper is to generalize the
problem described above to the situation of 
affine Kac-Moody Lie algebras in a sensible and interesting way. 
There are several
natural obstructions which we 
have to deal with. So, let now
$\hat{\mathfrak{g}}=\hat{\mathfrak{n}}_-\oplus
\hat{\mathfrak{h}}
\oplus\hat{\mathfrak{n}}_+$ be an affine Kac-Moody Lie 
algebra with a fixed {\em standard} triangular
decomposition and $\hat{\mathfrak{b}}=
\hat{\mathfrak{h}}\oplus\hat{\mathfrak{n}}_+$ the 
corresponding Borel subalgebra.
First of all, it is easy to observe
that the only ad-nilpotent ideal of $\hat{\mathfrak{b}}$
is the zero ideal. Hence we propose instead to
consider {\em strong ideals} of $\hat{\mathfrak{b}}$,
that is ideals in $\hat{\mathfrak{b}}$ of finite codimension, 
which are contained in $\hat{\mathfrak{n}}_+$. For 
finite-dimensional  ${\mathfrak{g}}$ this notion coincides 
with that of an ad-nilpotent ideal of ${\mathfrak{b}}$. It is easy to see
that the number of strong ideals in $\hat{\mathfrak{b}}$
is infinite.

Let $\mathfrak{i}$ be a strong ideal in $\hat{\mathfrak{b}}$.
Then, similarly to the finite dimensional case, the 
ideal $\mathfrak{i}$ can be written as a direct sum of
its intersections with the root subspaces of $\hat{\mathfrak{g}}$.
However, unlike the finite dimensional case, in the case
of affine Lie algebras some root spaces of $\hat{\mathfrak{g}}$
might have dimension bigger than one. 
An ideal $\mathfrak{i}$ in $\hat{\mathfrak{b}}$ 
is called {\em thick} provided that the 
following condition is satisfied: for any root $\alpha$
of $\hat{\mathfrak{g}}$ the fact that $\mathfrak{i}\cap 
\hat{\mathfrak{g}}_{\alpha}\neq 0$ implies 
$\hat{\mathfrak{g}}_{\alpha}\subset \mathfrak{i}$. 
An ideal $\mathfrak{i}$ in $\hat{\mathfrak{b}}$ 
is called {\em combinatorial} provided that
it is strong and thick. Unfortunately,  the number of
combinatorial  ideals is still infinite.

Let $\mathfrak{i}$ be a combinatorial ideal. Then the set of
all roots $\alpha$ for which 
$\hat{\mathfrak{g}}_{\alpha}\subset \mathfrak{i}$ is called the
{\em support} of $\mathfrak{i}$ and is denoted by 
$\mathrm{supp}(\mathfrak{i})$ (the corresponding root space of 
$\mathfrak{i}$ is denoted $\mathfrak{i}_{\alpha}$). Let $\delta$ denote the
indivisible positive imaginary root of $\hat{\mathfrak{g}}$.
Two combinatorial ideals
$\mathfrak{i}$ and $\mathfrak{j}$ will be called
{\em equivalent} provided that there exists 
$k\in\mathbb{Z}$ such that 
$\mathrm{supp}(\mathfrak{i})=k\delta+
\mathrm{supp}(\mathfrak{j})$. We will show that 
for untwisted affine Lie algebras 
the number of equivalence classes of combinatorial ideals
in $\hat{\mathfrak{b}}$ is finite, which naturally leads
to the problems of their classification and enumeration.

We give a general answer to the classification problem
in the case of untwisted affine Lie algebras in terms
of certain root combinatorics. For the affine 
$\mathfrak{sl}_n$ we also answer the enumeration
problem in some nice combinatorial terms involving 
the combinatorics of Catalan numbers 
(via Dyck paths) and their
generalizations. For $n\in\mathbb{N}$ we will
define an $n\times n$-matrix $\mathbf{C}_n$ with
nonnegative integer entries (related to the
Dyck path combinatorics), and a linear
transformation $\omega$ on the linear space of
all $n\times n$-matrices (over some commutative ring)
such that for the standard scalar product 
$A\cdot B=\sum_{i,j}a_{i,j}b_{i,j}$ we have the
following:

\begin{theorem}\label{tmain}
The number of equivalence classes of combinatorial ideals
in the case $\hat{\mathfrak{g}}=\hat{\mathfrak{sl}}_n$
equals $\mathbf{b}_n:=\mathbf{C}_n\cdot \omega\,\mathbf{C}_n$. 
\end{theorem}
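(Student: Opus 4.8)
The plan is to deduce the theorem from the classification of combinatorial ideals together with an explicit enumeration of Dyck-type data in type $A_{n-1}$. First I would translate the problem into pure combinatorics. By the classification, a combinatorial ideal $\i\subset\hat{\mathfrak b}$ is uniquely determined by its support, and for $\hat{\mathfrak g}=\hat{\mathfrak{sl}}_n$ that support is encoded by one integer $f_{\i}(\alpha)$ for each root $\alpha$ of $\mathfrak{sl}_n$ — the lowest level $k$ with $\alpha+k\delta\in\supp(\i)$ — together with the lowest level $g_{\i}$ from which on every imaginary root space is contained in $\i$ (here thickness is what forces $g_{\i}$ to be a single integer rather than a vector, and strongness is what forces all these integers to be finite). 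The ideal axioms translate into a finite system of linear inequalities: sub-additivity of $f_{\i}$ along root strings, plus inequalities sandwiching $g_{\i}$ between values of $f_{\i}$. A $\delta$-shift translates the datum $(f_{\i},g_{\i})$ by a global constant, so equivalence classes of combinatorial ideals correspond bijectively to the integral points of the resulting rational polyhedron modulo the diagonal $\Z$-line; taking in each class the ideal that is minimal with respect to the $\delta$-shift selects exactly one representative, and, by the finiteness already proved, yields a finite set $X_n$ whose cardinality is the number we must compute.

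Second — and this is the combinatorial core — I would analyse $X_n$ in type $A_{n-1}$. Writing the roots of $\mathfrak{sl}_n$ as $\varepsilon_i-\varepsilon_j$, one checks that the normalisation confines each $f_{\i}(\varepsilon_i-\varepsilon_j)$ to a short window, and then sub-additivity says precisely that the restriction of $f_{\i}$ to the positive roots is an order filter in the $A_{n-1}$ root poset, i.e. a Dyck path of semilength $n$, and likewise for the negative roots. Thus an element of $X_n$ is an ordered pair of Dyck-type half-profiles — but \emph{not} an arbitrary one: the two halves are coupled by an explicit compatibility condition coming from the root strings that cross between positive and negative roots together with the inequalities involving $g_{\i}$. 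Sorting the admissible half-profiles on each side according to the relevant boundary statistic (where the associated path meets, or leaves, the diagonal) is a ballot-type count, and its output is precisely the matrix $\mathbf C_n$: the entry $(\mathbf C_n)_{i,j}$ is the number of admissible half-profiles with prescribed boundary data $(i,j)$.

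Finally I would assemble the count. The number of compatible pairs is a double sum, $\sum_{i,j}(\text{\# left half-profiles of type }(i,j))\cdot(\text{\# right half-profiles compatible with type }(i,j))$, and by the symmetry between the two sides the second factor is again read off $\mathbf C_n$ after the reindexing dictated by the compatibility relation — that reindexing being exactly the linear transformation $\omega$. Hence $|X_n|=\sum_{i,j}(\mathbf C_n)_{i,j}\,(\omega\mathbf C_n)_{i,j}=\mathbf C_n\cdot\omega\,\mathbf C_n=\mathbf b_n$. The main obstacle I anticipate is the second step: proving cleanly that, after normalisation, the datum really does split into two Dyck-type halves coupled only through one explicit interface condition, and then bookkeeping that condition and the boundary statistics precisely enough to recognise them as the already-defined $\mathbf C_n$ and $\omega$. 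The first step is the classification together with elementary polyhedral combinatorics, and the last step is a formal manipulation of a double sum; it is the extraction of the two Dyck structures and the analysis of their interface — which is also what makes the sequence $(\mathbf b_n)$ new and nontrivial — that will require the real work.
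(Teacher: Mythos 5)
Your outline follows the paper's route exactly: reduce modulo $\delta$-shift to a canonical representative (the paper's \emph{basic} ideals, those whose support meets $D=D_0$; your level-function encoding $(f_{\mathfrak{i}},g_{\mathfrak{i}})$ is a legitimate repackaging of that reduction, Theorem~\ref{thm1}), then translate such a representative into an ordered pair $(p,q)$ of Dyck paths built from $\mathbf{s}_+(\mathfrak{i})$ and $\mathbf{s}_-(\mathfrak{i})$, then count pairs after stratifying by a boundary statistic. The double-sum assembly $\sum_{i,j}(\mathbf{C}_n)_{i,j}(\omega\mathbf{C}_n)_{i,j}$ is likewise exactly how the paper concludes from Lemma~\ref{lem9} and Theorem~\ref{thm7}.

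What your proposal does not supply — and what you yourself flag as "the real work" — is precisely where the paper's substance lies, so as a proof this is still an outline with three open blocks. First, to show that the pair $(p,q)$ determines the ideal and to read off the compatibility condition, the paper needs Proposition~\ref{prop4}, that the order $\preceq$ (generated by root-string moves inside $D$) coincides with the dominance order $\leq$ on $D$; this is nontrivial and is proved via a case-by-case check of all finite root systems (Lemma~\ref{lem6}). Your sketch speaks of "sub-additivity along root strings" without confronting this coincidence of orders, and for the imaginary root $\delta$ and the roots $\alpha+\delta$, $\alpha\in\Delta_-$, it is exactly this coincidence that makes the interface condition tractable. Second, the interface condition itself must be pinned down to the precise peak-height form of Lemma~\ref{lem9} (first/last peak of $q$ at height $\geq n-m$ resp. $\geq n-k$) before $\omega$ can be recognized in the double sum; calling it "a ballot-type count" is not yet enough to see that the correct reindexing is the specific operator $\omega$ from~\S\ref{s3.1}. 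Third, identifying $(\mathbf{C}_n)_{i,j}$ as $|\mathcal{D}_n(i,j)|$, the number of Dyck paths with first peak at height $i$ and last peak at height $j$, requires the recursion/peak-deletion bijection of Theorem~\ref{thm7}; your step 2 asserts this identification without the argument. None of these three points is an error of direction, but each is a genuine gap: a reader cannot certify the formula $\mathbf{b}_n=\mathbf{C}_n\cdot\omega\,\mathbf{C}_n$ from what is written.
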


The integral sequence  
$\{\mathbf{b}_n:n\geq 1\}$
seems to be new. We note that appearance of Catalan
and related numbers in this problem is expected,
as Catalan numbers enumerate ad-nilpotent ideals
in the Borel subalgebra of $\mathfrak{sl}_n$
(a very natural bijection of such ideals with Dyck
paths can be found in \cite{Pa}, a less natural
bijection appears in \cite{AKOP}, however, the latter
one has the advantage that it controls the nilpotency
degree of the ideal in terms of some combinatorial
parameters of the corresponding Dyck path).

We show that the Dyck path combinatorics controls
various algebraic properties of combinatorial ideals
in the affine case as well. In particular, we
describe the number of generators for combinatorial ideals
in terms of  valleys and peaks of the corresponding Dyck
paths. We also propose a combinatorial generalization of the
notion of nilpotency degree, in particular, of that of an
abelian ideal (which we call quasi-abelian). We describe
quasi-abelian ideals combinatorially using intervals in
the poset of Dyck paths. Enumeration of quasi-abelian
combinatorial ideals for $\hat{\mathfrak{sl}}_n$ leads to
yet another new integral sequence. However, for this
one we do not have any explicit formula.

Finally, we also address the problem of studying arbitrary
$\hat{\mathfrak{b}}$-ideals in $\hat{\mathfrak{n}}_+$
and describe possible supports for such ideals. The above notion
of equivalence for ideals lifts to arbitrary ideals resulting
in what we call {\em support equivalent} ideals. This
leads to yet another finite classification problem for 
$\hat{\mathfrak{g}}$. In the case of $\hat{\mathfrak{sl}}_n$
we describe equivalence classes for the latter problem
combinatorially using quadruples of Dyck paths.

The paper is organized as follows: in Section~\ref{s2}
we consider the Lie theoretic part of the problem,
studying various types of ideals in the Borel subalgebra
of an untwisted affine Lie algebra. In the first
subsection we collect necessary preliminaries,
in the second subsection we introduce our main objects,
which we call combinatorial ideals, and in the last
subsection we classify equivalence classes of 
combinatorial ideals in terms of root combinatorics.
Section~\ref{s3} is mostly combinatorial. We start
by defining some constructions with matrices which
we use to develop some aspects of combinatorics of Dyck
paths. We then use these combinatorics to prove Theorem~\ref{tmain}
answering the enumeration problem for combinatorial ideals in type $A$.
In the last part of the section we relate various algebraic
properties of combinatorial ideals to combinatorics of Dyck paths.
In Section~\ref{s4} we consider all $\hat{\mathfrak{b}}$-ideals 
in $\hat{\mathfrak{n}}_+$ and describe their supports in terms
of root combinatorics, showing that this leads to a new finite problem.
In type $A$ we relate these combinatorics to that of Dyck paths.
\vspace{2mm}

\noindent
{\bf Acknowledgements.} The research was done during 
the visit of the first author to Uppsala University and 
the second author to FIM at ETH Zurich. The financial 
support and accommodation by the host universities are
gratefully acknowledged. The first author is supported 
by the Swiss National Science Foundation. The second 
author is partially supported by the Swedish Research 
Council and the Royal Swe\-dish Academy of  Sciences.
We thank Vyjayanthi Chari for bringing \cite{CDR}
to our attention.

\section{Combinatorial ideals for untwisted affine Lie algebras}\label{s2}

As usual, we denote by $\mathbb{N}$ and $\mathbb{N}_0$ 
the sets of positive and non-negative integers, 
respectively.

\subsection{Untwisted affine Lie algebras}\label{s2.1}

Let $\mathfrak{g}$ be a simple finite dimensional
complex Lie algebra and $\hat{\mathfrak{g}}$ the
corresponding untwisted affine Lie algebra. Then 
$\hat{\mathfrak{g}}$ is the universal central extension
of the extension of the loop algebra $\mathfrak{g}\otimes 
\mathbb{C}[t,t^{-1}]$, the Lie bracket in which is 
given by $[x\otimes t^k,y\otimes t^l]=[x,y]\otimes t^{k+l}$ for $x,y\in \mathfrak{g}$ and $k,l\in\mathbb{Z}$, 
by the derivation $d$ such that $[d,x\otimes t^k]=
k\, x\otimes t^k$ for $x\in \mathfrak{g}$ and 
$k\in\mathbb{Z}$, see \cite[Chapter~17]{Ca} for details.

Let $\mathfrak{g}=\mathfrak{n}_-\oplus
\mathfrak{h}\oplus\mathfrak{n}_+$ be a triangular
decomposition of $\mathfrak{g}$. Then the standard
triangular decomposition 
$\hat{\mathfrak{g}}=\hat{\mathfrak{n}}_-\oplus
\hat{\mathfrak{h}}\oplus\hat{\mathfrak{n}}_+$ is
defined as follows: $\hat{\mathfrak{h}}$ is spanned
by $\mathfrak{h}$, $d$ and the center; and
$\hat{\mathfrak{n}}_{\pm}$ is spanned by 
$\mathfrak{n}_{\pm}$
and all elements of the form $x\otimes t^{\pm k}$, 
where $x\in \mathfrak{g}$ and $k>0$. Set
$\hat{\mathfrak{b}}:=\hat{\mathfrak{h}}\oplus
\hat{\mathfrak{n}}_+$.

Let $\Delta\subset \mathfrak{h}^*$ and
$\hat{\Delta}\subset \hat{\mathfrak{h}}^*$ denote
the root system  of $\mathfrak{g}$ and $\hat{\mathfrak{g}}$,
respectively. Then the triangular decompositions above 
induce partitions $\Delta=\Delta_+\cup \Delta_-$
and $\hat{\Delta}=\hat{\Delta}_+\cup \hat{\Delta}_-$
of $\Delta$ and $\hat{\Delta}$, respectively, into
positive and negative roots. Abusing notation
we identify $\Delta$ with a subset of $\hat{\Delta}$
by viewing $\mathfrak{g}$ as a subalgebra of
$\hat{\mathfrak{g}}$ via the map $x\mapsto x\otimes 1$.

We denote by $\hat{\Sigma}_+$ (resp. ${\Sigma}_+$) 
the submonoid
of the additive monoid $\hat{\mathfrak{h}}^*$ 
(resp. ${\mathfrak{h}}^*$) generated
by $\hat{\Delta}_+$ (resp. ${\Delta}_+$).
Recall the usual partial order $\leq$ on
$\hat{\mathfrak{h}}^*$ defined as follows:
for $\lambda,\mu\in\hat{\mathfrak{h}}^*$ we have
$\lambda\leq \mu$ if and only if $\mu-\lambda\in
\hat{\Sigma}_+$. 
Then the simple roots in
$\hat{\Delta}_+$ are the simple roots of $\Delta$
together with the root $\alpha_0:=-\alpha_{\mathrm{max}}+\delta$,
where $\alpha_{\mathrm{max}}$ is the unique maximal
(with respect to $\leq$)
root in $\Delta_+$. 

We denote by $D$ the set of roots of all elements of the form $x\otimes 1$,
$x\in \mathfrak{n}_+$, and $x\otimes t$, $x\in \mathfrak{h}\oplus \mathfrak{n}_-$.
Then $\hat{\Delta}_+$ is the disjoint union of $D=D_0$ and
$D_k:=D+k\delta$ for $k> 0$. 
Further, for $\alpha,\beta\in D$ such that $\alpha\neq \beta$,
the difference $\alpha-\beta$ is not of the form
$k\delta$ for $k\in\mathbb{Z}$. Note that for
every ideal $\mathfrak{i}$ in $\hat{\mathfrak{n}}_+$
of finite codimension, the support of $\mathfrak{i}$
contains the union of all $D_k$ for $k\gg 0$.

\subsection{Basic ideals}\label{s2.2}

A combinatorial ideal $\mathfrak{i}$ of $\mathfrak{b}$ is
called {\em basic} provided that $\mathrm{supp}
(\mathfrak{i})\cap D\neq \varnothing$.

\begin{theorem}\label{thm1}
\begin{enumerate}[$($a$)$]
\item\label{thm1.0} We have $\displaystyle
\mathrm{supp}(\mathfrak{i})\supset D':=\bigcup_{k>0}D_k$ 
for any basic $\mathfrak{i}$.
\item\label{thm1.1} 
Every equivalence class of combinatorial ideals contains
a unique basic ideal.
\item\label{thm1.2} The number of equivalence classes of combinatorial ideals is finite.
\end{enumerate}
\end{theorem}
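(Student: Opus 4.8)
The plan is to establish the three parts in order, as (b) and (c) follow quickly once (a) is in hand. For part (a), let $\mathfrak{i}$ be a basic combinatorial ideal, so there is some $\alpha\in\mathrm{supp}(\mathfrak{i})\cap D$. I would argue that $\mathrm{ad}(\hat{\mathfrak{n}}_+)$ raises the ``$\delta$-level'' by at least one in a controlled way: more precisely, any positive imaginary root $\delta$ is a sum of positive real roots lying in $D\cup D'$ (indeed $\delta=\alpha_0+\alpha_{\mathrm{max}}$ and $\alpha_{\mathrm{max}}\in D$, $\alpha_0\in D_1\subset D'$), and one can choose root vectors so that the relevant bracket is nonzero. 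Thus from a root vector in $\mathfrak{i}_\alpha$ with $\alpha\in D$ one produces, by bracketing with suitable elements of $\hat{\mathfrak{n}}_+$, nonzero elements in $\mathfrak{i}_{\alpha+\delta}$, hence $\mathfrak{i}_{\alpha+\delta}\neq 0$; since $\mathfrak{i}$ is thick, $\hat{\mathfrak{g}}_{\alpha+\delta}\subset\mathfrak{i}$. Iterating, $\alpha+k\delta\in\mathrm{supp}(\mathfrak{i})$ for all $k\geq 0$. To sweep out all of $D_k=D+k\delta$, I would use the structure of $D$: every root in $D$ is obtainable from $\alpha$ by repeatedly adding simple roots of $\hat{\Delta}_+$ while staying inside $D\cup D'$, together with the ideal property (bracketing with the corresponding simple root vectors in $\hat{\mathfrak{n}}_+$ stays in $\mathfrak{i}$), with nonvanishing of brackets guaranteed because all these root spaces are one-dimensional and the roots are real. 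The careful bookkeeping here — showing one can always ``climb up'' from a fixed $\alpha\in D$ to every element of $D'$ within $\hat{\Sigma}_+$ without accidental bracket cancellation — is the main obstacle, and I expect it to rest on the $\mathfrak{sl}_2$-subalgebra generated by each real simple root pair, where brackets of root vectors in the appropriate direction never vanish.

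For part (b), given any combinatorial ideal $\mathfrak{j}$, note its support meets $D_k$ for all $k\gg 0$ by the last remark of Section~\ref{s2.1}. Pick the smallest $k_0$ with $\mathrm{supp}(\mathfrak{j})\cap D_{k_0}\neq\varnothing$; then $\mathfrak{i}:=(-k_0\delta)+\mathfrak{j}$ (meaning the ideal with support $\mathrm{supp}(\mathfrak{j})-k_0\delta$, which is again a combinatorial ideal by the $\delta$-translation symmetry of $\hat{\mathfrak{g}}$) is basic. For uniqueness, suppose $\mathfrak{i}$ and $\mathfrak{i}'=m\delta+\mathfrak{i}$ are both basic with $m\neq 0$; without loss of generality $m>0$. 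By part (a), $\mathrm{supp}(\mathfrak{i})\supset D'$, and also $\mathrm{supp}(\mathfrak{i})\cap D\neq\varnothing$, so in fact $\mathrm{supp}(\mathfrak{i})\supset D\cup D'=\hat{\Delta}_+$ would force $\mathfrak{i}=\hat{\mathfrak{n}}_+$, contradicting finite codimension — so I must be more careful: what (a) gives is that if $\alpha\in\mathrm{supp}(\mathfrak{i})\cap D$ then $\alpha+D'\subset$ (roots at level $\geq 1$), hence $\mathrm{supp}(\mathfrak{i})$ already contains everything at level $\geq 1$ lying above $\alpha$. The point for uniqueness is that $\mathrm{supp}(\mathfrak{i})$ contains no root at level $<0$ (it lies in $\hat{\Delta}_+$) and, being basic, does meet level $0$; if it also equalled $m\delta+(\text{something meeting level }0)$ with $m>0$, that something would meet level $-m<0$, impossible. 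Hence $m=0$.

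Part (c) is then immediate: by (b) each equivalence class is represented by a basic ideal, and by (a) every basic ideal $\mathfrak{i}$ satisfies $D'\subset\mathrm{supp}(\mathfrak{i})\subset D\cup D'$ (the upper bound because $\mathfrak{i}\subset\hat{\mathfrak{n}}_+$ and, by (b)-type reasoning, it cannot meet any $D_k$ with $k<0$ — wait, basic only asserts it meets $D_0$; but an ideal contained in $\hat{\mathfrak{n}}_+$ has support in $\hat{\Delta}_+=\bigcup_{k\geq 0}D_k$ automatically). So a basic ideal is determined by the finite set $\mathrm{supp}(\mathfrak{i})\cap D\subset D$, and there are at most $2^{|D|}$ such subsets, hence finitely many basic ideals and finitely many equivalence classes. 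I would present (c) as a corollary of (a) and (b) with this counting remark, and devote the bulk of the write-up to the climbing argument in (a).
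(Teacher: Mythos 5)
Your proposal leaves two genuine gaps, and you yourself flag the first. In part (a), your plan is to move from $\alpha\in D\cap\mathrm{supp}(\mathfrak{i})$ directly to $\alpha+\delta$ by bracketing with root vectors for $\alpha_0$ and $\alpha_{\mathrm{max}}$ (using $\delta=\alpha_0+\alpha_{\mathrm{max}}$), and then ``sweep out'' each $D_k$. This breaks down because the intermediate sums need not be roots, in which case the brackets vanish: for instance with $\alpha=\alpha_{\mathrm{max}}$ the element $\alpha+\alpha_{\mathrm{max}}=2\alpha_{\mathrm{max}}$ is not a root, so you cannot bracket in that order, and in general neither $\alpha+\alpha_0$ nor $\alpha+\alpha_{\mathrm{max}}$ is guaranteed to lie in $\hat{\Delta}$. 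You concede that ``the careful bookkeeping here \dots is the main obstacle,'' and that obstacle is exactly what a proof must resolve. The paper's argument is structured differently and avoids it: one first shows $\delta\in\mathrm{supp}(\mathfrak{i})$ (if $\mathrm{supp}(\mathfrak{i})$ meets $\Delta_+$, climb within $\Delta_+$ to $\alpha_{\mathrm{max}}$ and then bracket with $\alpha_0$; if $\mathrm{supp}(\mathfrak{i})$ meets $\Delta_-+\delta$ at $\alpha+\delta$, bracket with $-\alpha$). Thickness then gives $\mathfrak{g}_\delta=\mathfrak{h}\otimes t\subset\mathfrak{i}$, and since $[\mathfrak{h}\otimes t,\,x_\gamma\otimes t^k]\neq 0$ for every real root $\gamma$ (choose $h$ with $\gamma(h)\neq 0$), bracketing with $\hat{\mathfrak{n}}_+$ sweeps out all real roots in $D'$; the imaginary roots $k\delta$, $k>1$, are then obtained by a similar commutation. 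You also have a small factual error: $\alpha_0=-\alpha_{\mathrm{max}}+\delta$ lies in $D=D_0$ (since $-\alpha_{\mathrm{max}}\in\Delta_-$), not in $D_1\subset D'$ as you claim.

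The second gap is in the existence half of (b). You assert that the $\delta$-shifted subspace is again a combinatorial ideal ``by the $\delta$-translation symmetry of $\hat{\mathfrak{g}}$,'' but there is no such symmetry: the map $x\otimes t^k\mapsto x\otimes t^{k+1}$ is not a Lie algebra homomorphism of the loop algebra (it fails to preserve the bracket, shifting degree by one extra unit). The paper instead checks directly that $\mathfrak{j}:=\bigoplus_{\alpha\in\mathrm{supp}(\mathfrak{i})}\hat{\mathfrak{g}}_{\alpha-\delta}$ is an ideal, and this check uses thickness essentially: given $x\otimes t^l\in\mathfrak{j}_\alpha$ and $y\otimes t^m\in\hat{\mathfrak{n}}_+$ of root $\beta$ with $[x\otimes t^l,y\otimes t^m]\neq 0$, one has $[x\otimes t^{l+1},y\otimes t^m]\neq 0$ and $x\otimes t^{l+1}\in\mathfrak{i}$ because $\alpha+\delta\in\mathrm{supp}(\mathfrak{i})$ and $\mathfrak{i}$ is thick, giving $\alpha+\beta+\delta\in\mathrm{supp}(\mathfrak{i})$ and hence $\alpha+\beta\in\mathrm{supp}(\mathfrak{j})$. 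Without that argument, the existence part of (b) is unproven. Your uniqueness argument (the support of a basic ideal meets $D_0$ and lies in $\hat{\Delta}_+=\bigcup_{k\geq 0}D_k$, so a positive $\delta$-shift would push part of it to negative level) is correct, as is your part (c), though the write-up there is muddled.
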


\begin{proof}
Let $\mathfrak{i}$ be a basic ideal. First we claim that
$\delta\in \mathrm{supp}(\mathfrak{i})$. Indeed, 
if $\mathrm{supp}(\mathfrak{i})$ contains some
$\alpha\in \Delta_+$, then, commuting with
positive root elements from $\mathfrak{n}_+$,
we get that $\mathrm{supp}(\mathfrak{i})$ contains
$\alpha_{\mathrm{max}}$. Commuting a nonzero root element
for the latter root with a nonzero root element for
the root $\alpha_0$ we get a nonzero element for
the root $\delta$, as required. If 
$\mathrm{supp}(\mathfrak{i})$ does not contain any
$\alpha\in \Delta_+$, then it either contains
$\delta$ (in which case we have nothing to prove)
or a root of the form $\alpha+\delta$ for some
$\alpha\in\Delta_-$. In the latter case, commuting
any nonzero root element for the root $\alpha+\delta$
with any nonzero root element for the positive
root $-\alpha$ we get a nonzero element for
the root $\delta$, as required.

As $\mathfrak{i}$ is thick, we then have 
$\mathfrak{g}_{\delta}\subset \mathfrak{i}$.
Commuting $\mathfrak{g}_{\delta}$ with 
$\hat{\mathfrak{n}}_{+}$ we obtain that 
$\mathrm{supp}(\mathfrak{i})$ contains all real
roots from $D'$. That  $\mathrm{supp}(\mathfrak{i})$
contains all $k\delta$ for $k>1$ follows from this
similarly to the previous paragraph. This proves
claim \eqref{thm1.0}.

Let $\mathfrak{i}$ be a combinatorial ideal. We claim that 
the equivalence class of $\mathfrak{i}$ contains a
basic ideal. We prove this by induction on the
minimal possible $k$ such that 
$\mathrm{supp}(\mathfrak{i})\cap D_k\neq \varnothing$.
If $k=0$, then $\mathfrak{i}$ is basic and we have 
nothing to prove. If $k>0$, define
\begin{displaymath}
\mathfrak{j}:=\bigoplus_{\alpha\in 
\mathrm{supp}(\mathfrak{i})} \hat{\mathfrak{g}}_{\alpha-\delta}.
\end{displaymath}
As $k>0$, the support of $\mathfrak{j}$ is contained
in $\cup_{i\geq k-1}D_i$ (and intersects $D_{k-1}$
non-trivially) and hence $\mathfrak{j}\subset 
\hat{\mathfrak{n}}_+$. As 
$\mathrm{supp}(\mathfrak{i})\supset \cup_{i\geq m}D_i$
for some $m$ big enough, the same is true for
$\mathrm{supp}(\mathfrak{j})$, which means that 
$\mathfrak{j}$ has finite codimension. 

Let $\alpha\in \mathrm{supp}(\mathfrak{j})$,
$x\otimes t^l\in \mathfrak{j}_{\alpha}$ 
for some $x\in\mathfrak{g}$ and $y\otimes t^m\in\hat{\mathfrak{n}}_+$
be a root element for a  root $\beta$. Assume that 
$[x\otimes t^l,y\otimes t^m]\neq 0$. Then 
$\alpha+\beta\in \hat{\Delta}_+$.
Moreover, $[x\otimes t^{l+1},y\otimes t^m]\neq 0$.
However, the root of $x\otimes t^{l+1}$ equals
$\alpha+\delta\in \mathrm{supp}(\mathfrak{i})$
and thus  $x\otimes t^{l+1}\in \mathfrak{i}$
as $\mathfrak{i}$ is thick. Therefore
$[x\otimes t^{l+1},y\otimes t^m]\neq 0$ also belongs
to $\mathfrak{i}$, which implies that 
$\alpha+\delta+\beta\in \mathrm{supp}(\mathfrak{i})$.
This yields that 
$\alpha+\beta\in \mathrm{supp}(\mathfrak{j})$.
Hence $[x\otimes t^l,y\otimes t^m]\in \mathfrak{j}$
by the definition of $\mathfrak{j}$. This shows 
that  $\mathfrak{j}$ is an ideal.

By construction, the ideals $\mathfrak{i}$ and
$\mathfrak{j}$ are equivalent. Moreover, as already 
mentioned above, the support of $\mathfrak{j}$
intersects $D_{k-1}$ non-trivially. Hence, by the 
inductive assumption, we get that the equivalence
class of $\mathfrak{i}$ contains a basic ideal.

Let now $\mathfrak{i}$ and $\mathfrak{j}$ be two equivalent
basic ideals. Then both $\mathrm{supp}(\mathfrak{i})$
and $\mathrm{supp}(\mathfrak{j})$ contain $D'$
by claim \eqref{thm1.0}. Further, 
$\mathrm{supp}(\mathfrak{i})\cap D=
\mathrm{supp}(\mathfrak{j})\cap D$ as for any
different $\alpha,\beta\in D$ the difference
$\alpha-\beta$ is not of the form $k\delta$, 
$k\in\mathbb{Z}$. This implies
$\mathrm{supp}(\mathfrak{i})=\mathrm{supp}(\mathfrak{j})$
and hence $\mathfrak{i}=\mathfrak{j}$ as both ideals
are thick, proving claim \eqref{thm1.1}.
 
Finally, from the above it follows that
a basic ideal is uniquely determined by the intersection
of its support with $D$. As $D$ is a finite set,
it follows that the number of basic ideals is finite.
Hence claim \eqref{thm1.2} follows from claim \eqref{thm1.1}.
\end{proof}

\begin{remark}
{\rm 
Using arguments similar to those used in the proof of
Theorem~\ref{thm1}\eqref{thm1.0} one can show that 
every non-zero ideal of $\hat{\mathfrak{n}}_+$ has
finite codimension, see Corollary~\ref{cor42}.
}
\end{remark}

\subsection{Classification of basic ideals}\label{s2.3}

For $\alpha\in D$ construct a subset $\overline{\alpha}$ 
of $D$ recursively as follows: set $\overline{\alpha}_0=
\{\alpha\}$, and for $i>0$ put
\begin{displaymath}
\overline{\alpha}_i=\{\gamma\in D:
\text{ there is }\alpha\in\hat{\Delta}_+
\text{ and }\beta\in \overline{\alpha}_{i-1}
\text{ such that }\gamma=\alpha+\beta\}.
\end{displaymath}
Then $\overline{\alpha}_0\subset \overline{\alpha}_1
\subset \overline{\alpha}_2\subset \dots$ by construction
and, as $D$ is finite, there is $i_0$ such that
$\overline{\alpha}_i=\overline{\alpha}_{i+1}$ for all
$i\geq i_0$. Set $\overline{\alpha}:=\overline{\alpha}_{i_0}$
(cf. \cite[Section~2]{CP1}).

Define the partial order $\preceq$ on $D$ as follows:
for $\alpha,\beta\in D$ set $\alpha\preceq\beta$ if and 
only if $\beta\in\overline{\alpha}$ (which is equivalent 
to $\overline{\beta}\subset \overline{\alpha}$). 
It is easy to see that $\delta$ is the unique maximal
element of $D$ with respect to $\preceq$.
For $\alpha\in D$ define 
\begin{displaymath}
\mathfrak{i}(\alpha):=
\bigoplus_{\beta\in\overline{\alpha}
\cup D'}\hat{\mathfrak{g}}_{\beta}.
\end{displaymath}

\begin{theorem}\label{thm2}
\begin{enumerate}[$($a$)$]
\item\label{thm2.1}  For any $\alpha\in D$ the space
$\mathfrak{i}(\alpha)$ is a basic ideal in 
$\hat{\mathfrak{b}}$.
\item\label{thm2.2} If $\mathfrak{j}$ is a basic ideal in 
$\hat{\mathfrak{b}}$ such that $\mathfrak{j}_{\alpha}\neq 0$
for some $\alpha\in D$,
then $\mathfrak{j}\supset \mathfrak{i}(\alpha)$.
\item\label{thm2.3} If $\mathfrak{j}$ is a basic ideal in 
$\hat{\mathfrak{b}}$, then 
\begin{displaymath}
\mathfrak{j}=\sum_{\alpha\in D\cap 
\mathrm{supp}(\mathfrak{j})} \mathfrak{i}(\alpha).
\end{displaymath}
\item\label{thm2.4} There is a bijection between the set
of basic ideals in $\hat{\mathfrak{b}}$ and nonempty
anti-chains of the finite poset $(D,\preceq)$.
\item\label{thm2.5} There is a bijection between the set
of basic ideals in $\hat{\mathfrak{b}}$ and 
submodules of the $\hat{\mathfrak{b}}$-module
$\hat{\mathfrak{n}}_+/\mathfrak{i}(\delta)$.
\end{enumerate}
\end{theorem}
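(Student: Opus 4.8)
The plan is to establish the five claims in a logically dependent order, since several of them feed into each other. First I would prove \eqref{thm2.1}: to see that $\mathfrak{i}(\alpha)$ is an ideal, observe that by construction the set $\overline{\alpha}$ is closed under adding positive roots (staying inside $D$), and $D'$ is closed under adding any positive root (this is essentially Theorem~\ref{thm1}\eqref{thm1.0}), so the bracket of any root element in $\mathfrak{i}(\alpha)$ with any element of $\hat{\mathfrak{n}}_+$ lands back in $\mathfrak{i}(\alpha)$ — here one must be slightly careful that a root $\gamma \in \overline{\alpha}$ together with a positive root $\beta$ may satisfy $\gamma + \beta \in D_k$ for $k>0$, in which case it lies in $D'$, which is still fine. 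Thickness is automatic since $\mathfrak{i}(\alpha)$ is by definition a sum of full root spaces, and finite codimension follows because $\mathrm{supp}(\mathfrak{i}(\alpha)) \supset D'$, whose complement in $\hat{\Delta}_+$ is the finite set $D \setminus \overline{\alpha}$. Finally $\mathfrak{i}(\alpha)$ is basic because $\alpha \in \overline{\alpha}_0 \subset \overline{\alpha} \subset \mathrm{supp}(\mathfrak{i}(\alpha))$, so its support meets $D$.

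Next I would prove \eqref{thm2.2}. If $\mathfrak{j}$ is basic with $\mathfrak{j}_\alpha \neq 0$, then thickness gives $\hat{\mathfrak{g}}_\alpha \subset \mathfrak{j}$; bracketing repeatedly with root elements of $\hat{\mathfrak{n}}_+$ and using thickness each time shows $\hat{\mathfrak{g}}_\beta \subset \mathfrak{j}$ for every $\beta$ reachable from $\alpha$ by adding positive roots, i.e.\ for every $\beta \in \overline{\alpha}$ (a routine induction on the index $i$ in the definition of $\overline{\alpha}_i$, where the only subtlety is checking the bracket is genuinely nonzero, which holds because in a Kac--Moody algebra if $\beta, \beta' $ are roots with $\beta + \beta'$ also a root and $\beta'$ real then the relevant bracket is nonzero — one can also invoke that $\mathfrak{j}$ has finite codimension to sidestep vanishing issues). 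Combined with Theorem~\ref{thm1}\eqref{thm1.0}, which gives $D' \subset \mathrm{supp}(\mathfrak{j})$ since $\mathfrak{j}$ is basic, this yields $\mathfrak{j} \supset \mathfrak{i}(\alpha)$. Claim \eqref{thm2.3} is then a short deduction: the inclusion $\supseteq$ is immediate from \eqref{thm2.2} applied to each $\alpha \in D \cap \mathrm{supp}(\mathfrak{j})$; the inclusion $\subseteq$ holds because $\mathfrak{j}$ is thick and strong, so $\mathfrak{j} = \bigoplus_{\beta \in \mathrm{supp}(\mathfrak{j})} \hat{\mathfrak{g}}_\beta$, and every $\beta$ lies either in $D'$ (covered by any $\mathfrak{i}(\alpha)$) or in $D \cap \mathrm{supp}(\mathfrak{j})$ (covered by $\mathfrak{i}(\beta)$ itself, since $\beta \in \overline{\beta}$).

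For \eqref{thm2.4}, I would send a basic ideal $\mathfrak{j}$ to the set of $\preceq$-minimal elements of $D \cap \mathrm{supp}(\mathfrak{j})$. By \eqref{thm2.3} together with the observation that $\alpha \preceq \beta$ implies $\mathfrak{i}(\beta) \subset \mathfrak{i}(\alpha)$ (because $\overline{\beta} \subset \overline{\alpha}$), this minimal set is a nonempty anti-chain that determines $\mathfrak{j}$; nonemptiness uses that $\mathfrak{j}$ is basic. Conversely, given a nonempty anti-chain $S \subset D$, the ideal $\sum_{\alpha \in S} \mathfrak{i}(\alpha)$ is basic by \eqref{thm2.1}, and one checks its minimal-element set is exactly $S$ using the anti-chain property and the fact that $D \cap \mathrm{supp}\big(\sum_{\alpha\in S}\mathfrak{i}(\alpha)\big) = \bigcup_{\alpha \in S}\overline{\alpha}$, whose minimal elements are precisely the $\preceq$-minimal elements among the $\alpha \in S$, i.e.\ all of $S$. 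These two maps are mutually inverse. Finally, \eqref{thm2.5}: since $\mathfrak{i}(\delta)$ is the smallest basic ideal (as $\delta$ is the unique maximal element of $(D,\preceq)$, so $\overline{\delta} = \{\delta\}$ and $\overline{\alpha} \ni \delta$ for all $\alpha$, hence $\mathfrak{i}(\delta) \subset \mathfrak{i}(\alpha)$ for every $\alpha$), every basic ideal contains $\mathfrak{i}(\delta)$; a basic ideal $\mathfrak{j}$, being a $\hat{\mathfrak{b}}$-submodule of $\hat{\mathfrak{n}}_+$ containing $\mathfrak{i}(\delta)$, gives the submodule $\mathfrak{j}/\mathfrak{i}(\delta)$ of $\hat{\mathfrak{n}}_+/\mathfrak{i}(\delta)$, and conversely. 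The only thing to verify is that an arbitrary $\hat{\mathfrak{b}}$-submodule $M$ of $\hat{\mathfrak{n}}_+/\mathfrak{i}(\delta)$ pulls back to a \emph{combinatorial} (thick and strong) ideal: thickness follows because $\hat{\mathfrak{h}} \subset \hat{\mathfrak{b}}$ acts semisimply, so $M$ is a sum of weight spaces and hence automatically thick on the quotient, and the preimage is thick since $\mathfrak{i}(\delta)$ is; finite codimension is inherited because $\hat{\mathfrak{n}}_+/\mathfrak{i}(\delta)$ is finite-dimensional. I expect the main obstacle to be the bookkeeping in \eqref{thm2.4}, specifically the bijectivity check — confirming that distinct anti-chains give ideals with distinct minimal sets and that the round trip recovers the original anti-chain, which hinges on the monotonicity $\alpha \preceq \beta \Rightarrow \mathfrak{i}(\beta) \subseteq \mathfrak{i}(\alpha)$ and on correctly identifying $D \cap \mathrm{supp}(\mathfrak{i}(\alpha))$ with $\overline{\alpha}$ rather than something larger.
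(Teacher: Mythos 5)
Your proposal is correct and follows essentially the same route as the paper: (a) and (c) by direct inspection of definitions, (b) by climbing up $\overline{\alpha}$ and using that for a \emph{real} root $\beta'$ and a root $\beta$ with $\beta+\beta'$ a root the bracket $[\hat{\mathfrak{g}}_{\beta'},\hat{\mathfrak{g}}_{\beta}]$ is nonzero (the paper phrases this via $\mathfrak{sl}_2$-theory applied to the subalgebra attached to the real step $\xi_i$, exploiting that all intermediate roots in $D\setminus\{\delta\}$ are real, hence one-dimensional), (d) via minimal elements versus the coideal/sum of principal ideals, and (e) via the one-dimensionality of the weight spaces of $\hat{\mathfrak{n}}_+/\mathfrak{i}(\delta)$. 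The only place where your write-up is looser than the paper is the nonvanishing claim in (b): as stated it is a general Kac--Moody assertion that needs the real-root hypothesis and the one-dimensionality of the root spaces being hit, which is exactly what the paper's explicit chain $\gamma_0,\dots,\gamma_k$ of \emph{real} roots in $D$ supplies; the parenthetical suggestion to ``invoke finite codimension to sidestep vanishing'' is not a valid substitute.
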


\begin{proof}
Claim \eqref{thm2.1} follows directly from the definitions. 
To prove claim \eqref{thm2.2}, let  $\mathfrak{j}$ be a 
basic ideal in $\hat{\mathfrak{b}}$ such that $\mathfrak{j}_{\alpha}\neq 0$. Then $D'\subset
\mathrm{supp}(\mathfrak{j})$ by 
Theorem~\ref{thm1}\eqref{thm1.0}. Let
$\beta\in D$ be such that $\alpha\preceq\beta$.
To prove claim \eqref{thm2.2} it is enough to show that
$\beta\in \mathrm{supp}(\mathfrak{j})$. If $\beta=\delta$,
then $\beta\in \mathrm{supp}(\mathfrak{j})$ as was shown 
in the proof of Theorem~\ref{thm1}\eqref{thm1.0}, hence
we may assume $\beta\neq \delta$ (i.e. $\beta$ is a real 
root). As $\alpha\preceq\beta$, there is a sequence of
real roots $\gamma_0=\alpha,\gamma_1,\dots,\gamma_k=\beta$
in $D$ such that $\xi_i:=
\gamma_i-\gamma_{i-1}\in\hat{\Delta}_+$
for all $i=1,2,\dots,k$. Note that each $\xi_i$ is
real and hence gives rise to an $\mathfrak{sl}_2$-subalgebra
$\mathfrak{s}_i$ of $\hat{\mathfrak{g}}$. Then $\dim \hat{\mathfrak{g}}_{\gamma_i}=1$
for all $i$ as all $\gamma_i$ are real, and the classical
$\mathfrak{sl}_2$-theory applied to the adjoint action
of $\mathfrak{s}_i$ on $\hat{\mathfrak{g}}$ implies 
$[\hat{\mathfrak{g}}_{\xi_i},\hat{\mathfrak{g}}_{\gamma_{i-1}}]\neq 0$. The latter
yields $\beta\in \mathrm{supp}(\mathfrak{j})$ and
claim \eqref{thm2.2} follows.

Claim \eqref{thm2.3} follows directly from claim
\eqref{thm2.2} and Theorem~\ref{thm1}\eqref{thm1.0}. 
To prove claim \eqref{thm2.4}, let $\mathfrak{j}$
be a basic ideal. Denote by $B_{\mathfrak{j}}$ the
set of all minimal (with respect to $\preceq$)
elements in $D\cap \mathrm{supp}(\mathfrak{j})$.
Then $B_{\mathfrak{j}}$ is a nonempty anti-chain in $D$.
Conversely, given a nonempty anti-chain $B$ in $D$,
define $\overline{B}$ to be the coideal of $D$
generated by $B$ and set
\begin{displaymath}
\mathfrak{i}_{B}:=
\bigoplus_{\beta\in\overline{B}
\cup D'}\mathfrak{g}_{\beta}.
\end{displaymath}
Then it is easy to check that $\mathfrak{i}_{B}$
is a basic ideal and that the maps $\mathfrak{i}\mapsto
B_{\mathfrak{i}}$ and $B\mapsto \mathfrak{i}_{B}$
are mutually inverse bijections. Claim \eqref{thm2.4} follows.

If $\mathfrak{i}$ is a basic ideal, then 
$\mathfrak{i}\supset \mathfrak{i}(\delta)$ and hence
the image of $\mathfrak{i}$ in the 
$\hat{\mathfrak{b}}$-module
$\hat{\mathfrak{n}}_+/\mathfrak{i}(\delta)$
is a submodule and this map from the set of basic ideals
to the set of submodules in 
$\hat{\mathfrak{n}}_+/\mathfrak{i}(\delta)$
is injective. It is also easily seen to be surjective
as the full preimage of a submodule in
$\hat{\mathfrak{n}}_+/\mathfrak{i}(\delta)$ is a
basic ideal (here it is important that all 
$\hat{\mathfrak{h}}$-weight spaces of 
$\hat{\mathfrak{n}}_+/\mathfrak{i}(\delta)$ are
one-dimensional as they correspond to real roots).
This completes the proof.
\end{proof}

The ideal $\mathfrak{i}(\alpha)$ can be considered
as a kind of ``principal'' basic ideal generated by
$\hat{\mathfrak{g}}_{\alpha}$. The next proposition relates
the natural order $\leq$ to the order $\preceq$ on $D$ defined above.

\begin{proposition}\label{prop4}
The orders $\leq$ and $\preceq$ coincide on $D$. 
\end{proposition}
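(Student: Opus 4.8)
The plan is to show the two inclusions $(\alpha\preceq\beta \Rightarrow \alpha\leq\beta)$ and $(\alpha\leq\beta \Rightarrow \alpha\preceq\beta)$ separately, working entirely inside the finite set $D$. The first direction is essentially immediate: if $\alpha\preceq\beta$ then by definition there is a chain $\gamma_0=\alpha,\gamma_1,\dots,\gamma_k=\beta$ in $D$ with each $\gamma_i-\gamma_{i-1}\in\hat\Delta_+$, so $\beta-\alpha$ is a sum of positive roots, i.e.\ $\beta-\alpha\in\hat\Sigma_+$, which is exactly $\alpha\leq\beta$. (One subtlety: the recursive definition of $\overline\alpha$ allows $\gamma=\alpha'+\beta$ with $\alpha'\in\hat\Delta_+$ arbitrary, not necessarily simple, but a sum of positive roots still lies in $\hat\Sigma_+$, so nothing changes.)

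For the converse, suppose $\alpha\leq\beta$ with $\alpha,\beta\in D$, so $\beta-\alpha=\sum_{i}c_i\alpha_i$ with $c_i\in\mathbb N_0$ and the $\alpha_i$ ranging over the simple roots of $\hat\Delta_+$. I would argue by induction on the height $\mathrm{ht}(\beta-\alpha)=\sum_i c_i$. If the height is $0$ then $\alpha=\beta$ and there is nothing to prove. If it is positive, the key point is to find one simple root $\alpha_j$ with $c_j>0$ such that $\gamma:=\alpha+\alpha_j$ still lies in $D$ (equivalently, $\gamma\in\hat\Delta_+$ with $\gamma-\delta\notin\hat\Delta_+$, since $D$ is the "bottom layer" $D_0$ of $\hat\Delta_+$); granting this, $\alpha\preceq\gamma$ holds in one step, $\gamma\leq\beta$ with strictly smaller height, so by induction $\gamma\preceq\beta$, and transitivity of $\preceq$ gives $\alpha\preceq\beta$.

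The main obstacle is precisely the existence of such a simple root $\alpha_j$ keeping us inside $D$. I would handle this using standard root-system combinatorics: since $\beta>\alpha$, the general lemma on root strings / the fact that if $\mu>\nu$ are both roots (or one is a root and the difference is a positive sum of simple roots) then some simple step $\nu\mapsto\nu+\alpha_j$ is again a root, applies to give $\alpha+\alpha_j\in\hat\Delta_+$. To see it lands in $D_0$ rather than a higher $D_i$, note that $\alpha\in D$ means the $\alpha_0$-coefficient of $\alpha$ is $0$ or $1$ in the appropriate normalization (more precisely, $\alpha$ has $\delta$-grade $0$, i.e.\ is of the form described in Section~\ref{s2.1}); adding a simple root can raise the $\alpha_0$-coefficient by at most one, and the cases where it would leave $D$ can be ruled out by choosing $\alpha_j$ to be a simple root of $\Delta$ (available because $\beta$, being in $D$, has the same $\delta$-grade as $\alpha$, so $\beta-\alpha$ has $\delta$-grade $0$ and hence the $\alpha_0$-coefficient $c_0$ equals the sum of coefficients forced by the $\delta$-grade constraint). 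I would phrase this as: since $\beta-\alpha\in\mathbb Z\Delta$ has height $>0$ and $\beta-\alpha\in\Sigma_+$ (the monoid generated by $\Delta_+$, not just $\hat\Delta_+$ — this uses that both endpoints lie in $D$ and the structure of $D$), the usual finite-type argument produces a simple root $\alpha_j\in\Delta_+$ with $\alpha+\alpha_j\in\hat\Delta_+$; and $\alpha+\alpha_j$ automatically has the same $\delta$-grade data as $\alpha$, hence lies in $D$. With that lemma in hand the induction closes and the two orders coincide.
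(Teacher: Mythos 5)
The easy direction and the reduction by height are fine, but the inductive step has a genuine gap exactly where the paper has to work hardest. You assert that for $\alpha,\beta\in D$ with $\alpha\leq\beta$, the difference $\beta-\alpha$ lies in $\Sigma_+$ (the monoid generated by $\Delta_+$ alone) and therefore one can always take a simple step by some $\alpha_j\in\Delta_+$, staying at the same ``$\delta$-grade''. That is false in the mixed case $\alpha\in\Delta_+$, $\beta\in\Delta_-+\delta$. There $\beta-\alpha$, written in the affine simple roots, has $\alpha_0$-coefficient equal to~$1$, so it does \emph{not} belong to $\Sigma_+$; at some point the chain from $\alpha$ to $\beta$ must cross the barrier by adding $\alpha_0=-\alpha_{\mathrm{max}}+\delta$, passing from $\Delta_+$ to $\Delta_-+\delta$. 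Your parenthetical remark, ``this uses that both endpoints lie in $D$ and the structure of $D$,'' is the opposite of the truth: it is precisely the structure of $D=\Delta_+\cup\{\delta\}\cup(\Delta_-+\delta)$ that creates this mixed case.

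The paper splits into three cases according to where $\alpha$ and $\beta$ sit, and the mixed case (their Case~3) is handled by a dedicated, nontrivial combinatorial statement (Lemma~6): there is no decomposition $\alpha_{\mathrm{max}}=\xi+\zeta+\eta$ with $\xi,\zeta\in\Delta_+$, $\xi+\zeta\notin\Delta_+$, $\eta$ a positive combination of simple roots $\eta_i$, and $\xi+\eta_i,\zeta+\eta_i\notin\Delta_+$ for all $i$. That lemma is proved by a case-by-case inspection of the highest root in every irreducible finite root system, and it is exactly what guarantees that when neither $\alpha+\gamma_i$ nor $\beta-\gamma_i$ is a root for any simple constituent $\gamma_i$ of $\beta-\alpha-\alpha_0$, one must nonetheless have $\beta-\alpha\in\hat\Delta_+$, allowing the direct step $\alpha\preceq\beta$. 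Your proposal simply elides this obstruction. As a smaller point, even in the pure case $\alpha,\beta\in\Delta_+$ the claim that a simple step $\alpha\mapsto\alpha+\alpha_j$ toward $\beta$ always exists is less ``standard'' than you suggest; the paper proves it via $\mathfrak{sl}_2$-theory applied to an auxiliary module and a dominant-weight argument (Humphreys, Proposition~21.3), rather than citing it as folklore. To repair the proposal you would need, at minimum, to isolate the mixed case and supply an argument playing the role of Lemma~6.
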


\begin{proof}
Obviously, $\preceq$ is a subset of $\leq$ (as a binary 
relation), so we only have to prove the reverse inclusion.
Let $\alpha,\beta\in D$ be such that $\alpha\leq\beta$. 
We have to show that $\alpha\preceq\beta$. If $\beta=\delta$,
then $\alpha\preceq\delta$ is clear, so in the following we
may assume that $\beta\neq \delta$, that is that 
both $\alpha$ and $\beta$ are real roots.

{\em Case~1.} Assume first that $\beta\in\Delta_+$, then
$\alpha\in\Delta_+$ as well. Let $\gamma_1,\dots,\gamma_k$
be simple roots such that $\beta-\alpha\in\sum_{i=1}^k
\mathbb{N}\,\gamma_i$ and $\mathfrak{a}$ be the semi-simple
Lie subalgebra of $\hat{\mathfrak{g}}$ which these roots
(and their negatives) generate. Consider the 
finite-dimensional $\mathfrak{a}$-module
\begin{displaymath}
V:=\bigoplus_{\xi\in \alpha+
\sum_{i=1}^k\mathbb{Z}\gamma_i} 
\hat{\mathfrak{g}}_{\xi}.
\end{displaymath}
It is enough to show that $\alpha+\gamma_i$ is a 
weight of $V$ for some $i$. Indeed, if this is the case,
then $\alpha\preceq \alpha+\gamma_i$, furthermore,
$\alpha+\gamma_i\leq \beta$ and the claim follows
by induction on the height of $\beta-\alpha$.
Assume that $\alpha+\gamma_i$ is not a 
weight of $V$ for every $i$. Then $\alpha$ is an
$\mathfrak{a}$-highest
weight of $V$. Let $\beta'\geq \beta$ be the highest 
weight of the unique simple subquotient $V'$ of $V$, which
intersects the one-dimensional space 
$\hat{\mathfrak{g}}_{\beta}$ non-trivially. Then 
$\alpha\leq \beta'$ are two $\mathfrak{a}$-dominant
weights. By \cite[Proposition~21.3]{Hu}, $\alpha$
is a weight of $V'$.
Since $\hat{\mathfrak{g}}_{\alpha}$ is one-dimensional,
and $\alpha$ is a highest weight of $V$, we get
$\alpha=\beta'=\beta$ and we are done.

{\em Case~2.} Assume that $\alpha\not\in\Delta_+$, then
$\beta\not\in\Delta_+$ as well. Then $\alpha\leq \beta$
implies $-\beta+\delta\leq -\alpha+\delta$ and
$-\beta+\delta,-\alpha+\delta\in\Delta_+$. From
Case~1 we have $-\beta+\delta\preceq -\alpha+\delta$,
which implies $\alpha\preceq\beta$.

{\em Case~3.} Finally, assume that $\alpha\in\Delta_+$
while $\beta\not\in\Delta_+$. In this case we will need
the following auxiliary lemma:

\begin{lemma}\label{lem6}
There does not exist a decomposition of the maximal
root of $\Delta_+$ of the form
\begin{equation}\label{eq2}
\alpha_{\mathrm{max}}=\xi+\zeta+\eta
\end{equation}
such that the
following conditions are satisfied:
\begin{enumerate}[$($i$)$]
\item\label{lem6.1} $\xi,\zeta\in \Delta_{+}$
while $\xi+\zeta\not\in \Delta_{+}$;
\item\label{lem6.2} $\eta$ is a linear
combination of some simple roots 
$\eta_1,\dots,\eta_k$, $k\geq 1$,
with positive integer coefficients;
\item\label{lem6.3} $\xi+\eta_i\not\in \Delta_{+}$ and
$\zeta+\eta_i\not\in \Delta_{+}$ for all $i$.
\end{enumerate}
\end{lemma}

Given Lemma~\ref{lem6}, the proof of Case~3 goes
as follows: As $\alpha\in\Delta_+$, 
$\beta\not\in\Delta_+$
and $\alpha\leq\beta$,
we can write $\beta=\alpha+\alpha_0+\gamma$
for some $\gamma\in \Sigma_+$. Let $\gamma_1,\dots,\gamma_k$
denote all simple roots which appear in the decomposition
of $\gamma$ as a linear combination of simple roots
with positive integer coefficients. If $\alpha+\gamma_i$
is a root for some $i$, then $\alpha\preceq\alpha+\gamma_i$
and $\alpha+\gamma_i\leq \beta$ by construction, so we
can complete the argument by induction on the height of
$\beta-\alpha$. Similarly, if $\beta-\gamma_i$ is a
root for some $i$. Finally, if 
$\alpha-\beta+\delta\in\Delta_+$, 
then $\beta-\alpha\in\hat{\Delta}_+$
and hence $\alpha\preceq\beta$ by definition. If none
of the above is satisfied, then, taking
$\xi=\alpha$, $\zeta=-\beta+\delta$ and $\eta=\gamma$, we
see that these elements satisfy conditions
\eqref{lem6.1}--\eqref{lem6.3} of Lemma~\ref{lem6},
which is a contradiction by Lemma~\ref{lem6}.
This completes the proof.
\end{proof}

\begin{proof}[Proof of Lemma~\ref{lem6}.]
We prove Lemma~\ref{lem6} by a brute force case-by-case
analysis of all reduced irreducible finite root systems.
Before we start this analysis we make some remarks. 
Assume that the decomposition of the form \eqref{eq2}
exists. Let $\Delta_{\eta}$ denote the root system
generated by $\eta_i$, $i=1,\dots,k$. We have
$\varnothing\neq \Delta_{\eta}\subsetneq \Delta$
(the latter inequality follows from conditions 
\eqref{lem6.1} and \eqref{lem6.3}).

From condition \eqref{lem6.3} it follows that
reflection with respect to each $\eta_i$ either
does not effect $\xi$ and $\zeta$ or decreases them
(in this proof by ``decreases'' we always mean
``with respect to $\leq$''). The element $\eta$
satisfies $0< \eta<\alpha_{\mathrm{max}}$
and thus must be decreased by the
reflection with respect to at least one of the $\eta_i$'s.
This and \eqref{eq2} imply that every decrease of 
$\eta$ automatically gives a decrease of 
$\alpha_{\mathrm{max}}$. The coordinate of
a vector $v\in\mathfrak{h}^*$ (with distinguished basis
of simple roots), reflection with 
respect to which  decreases $v$, will be called
{\em decreasable}. We will also say that the
corresponding simple root {\em decreases} $v$.

{\em Type~$A$.} In this case $\alpha_{\mathrm{max}}$
is as follows, with decreasable coordinates in bold
(here and in the rest of the proof we use 
\cite[Chapter~III]{Hu} as a reference):
\begin{displaymath}
\xymatrix{ 
{\bf 1}\ar@{-}[r]&1\ar@{-}[r]&1\ar@{-}[r]
&\dots\ar@{-}[r]&1\ar@{-}[r]&1\ar@{-}[r]&{\bf 1}
}
\end{displaymath}
The system $\Delta_{\eta}$ must contain at least one simple root
which decreases $\alpha_{\mathrm{max}}$.
Without loss of generality we thus may assume that 
$\Delta_{\eta}$ contains the leftmost simple
root. Then $\eta$'s coordinate at it must be $1$
and, since this coordinate is decreasable, the next
from the left coordinate must be $1$ as well.
Since this coordinate is not decreasable, its right
neighbor must be $1$ again and so on. We get
$\eta=\alpha_{\mathrm{max}}$, which is not possible.

{\em Type~$D$.} In this case $\alpha_{\mathrm{max}}$
is as follows, with the unique decreasable coordinate 
in bold:
\begin{displaymath}
\xymatrix{ 
{1}\ar@{-}[r]&{\bf 2}\ar@{-}[r]&2\ar@{-}[r]
&\dots\ar@{-}[r]&2\ar@{-}[r]&2\ar@{-}[r]\ar@{-}[d]&{1}\\
&&&&&1&
}
\end{displaymath}
The system $\Delta_{\eta}$ must contain the simple root 
which decreases $\alpha_{\mathrm{max}}$. The corresponding
coordinate of $\eta$ can thus be $1$ or $2$. If we
assume that it is $2$, then, using the uniqueness of
decreasable coordinate in $\eta$ one shows that
$\eta=\alpha_{\mathrm{max}}$, a contradiction.
Hence this coordinate is $1$. The coordinate to the
left cannot be $1$ as in this case it would be decreasable,
hence it is $0$. This forces the coordinate to the right
to be $1$. The latter is not decreasable which forces the
next coordinate to the right to be  $1$ or $2$. But a
$2$ gives a decreasable coordinate, and hence the only
possibility is $1$. In this way we show that all
nonzero coordinates of $\eta$ have to be $1$ and thus
the last nonzero coordinate which we get will be 
decreasable, a contradiction.

{\em Type~$B$.} 
In this case $\alpha_{\mathrm{max}}$
is as follows, with the unique decreasable coordinate 
in bold:
\begin{displaymath}
\xymatrix{ 
{1}\ar@{-}[r]&{\bf 2}\ar@{-}[r]&2\ar@{-}[r]
&\dots\ar@{-}[r]&2\ar@{-}[r]&2\ar@{=>}[r]&{2}
}
\end{displaymath}
Here the argument is similar to the one we used in type $D$.

{\em Type~$C$.} In this case $\alpha_{\mathrm{max}}$
is as follows, with the unique decreasable coordinate 
in bold:
\begin{displaymath}
\xymatrix{ 
{\bf 2}\ar@{-}[r]&{2}\ar@{-}[r]&2\ar@{-}[r]
&\dots\ar@{-}[r]&2\ar@{-}[r]&2&{1}\ar@{=>}[l]
}
\end{displaymath}
Here again the argument is similar to the one we used 
in type $D$ with the difference that we do not need
to bother about what happens to the left of the
decreasable root.

{\em Type~$E_6$.} In this case $\alpha_{\mathrm{max}}$
is as follows, with the unique decreasable coordinate 
in bold:
\begin{displaymath}
\xymatrix{ 
{1}\ar@{-}[r]&{2}\ar@{-}[r]&3\ar@{-}[r]\ar@{-}[d]
&2\ar@{-}[r]&{1}\\
&&{\bf 2}&&
}
\end{displaymath}
Similarly to the previous cases we get that the
coordinate of $\eta$ at the unique decreasable
root of $\alpha_{\mathrm{max}}$ cannot be $2$ for
the latter forces $\eta=\alpha_{\mathrm{max}}$,
so this coordinate is $1$. So the coordinate of
$\eta$ in its unique neighbor must be also $1$.
Now the next neighbor to the right can have 
coordinates $0$, $1$ or $2$. The value $2$
is not possible as the coordinate will be decreasing.
The value $1$ forces the value $1$ in the next
right neighbor  by the ``type $A$''-argument,
and this last coordinate becomes decreasing. 
Hence the only possibility is $0$. By symmetry,
the value of the coordinate to the left from
the triple point is also zero. This forces the
triple point to be decreasing and we again have a contradiction.

{\em Types~$E_{7,8}$.} Are similar to $E_6$ 
(but with many more cases to go through) and left
to the reader.

{\em Type~$F_{4}$.} In this case $\alpha_{\mathrm{max}}$
is as follows, with the unique decreasable coordinate 
in bold:
\begin{displaymath}
\xymatrix{ 
{\bf 2}\ar@{-}[r]&{3}\ar@{=>}[r]&4\ar@{-}[r]
&2
}
\end{displaymath}
Similarly to the previous cases we see that the
leftmost coordinate of $\eta$ must have value $1$,
which also forces value $1$ or its right neighbor.
Now the third coordinate cannot be decreasing and hence
can only have values $1,2$. The value $2$ forces
the rightmost value to be $2$ and this rightmost 
coordinate becomes decreasing, a contradiction.
Hence the third coordinate has value $1$. The
last coordinate must then be zero as 
$\Delta\neq \Delta_{\eta}$. But so far we could have
\begin{displaymath}
\eta\quad=\quad\xymatrix{ 
{1}\ar@{-}[r]&{1}\ar@{=>}[r]&1\ar@{-}[r].
&0
}
\end{displaymath}
However, in this case $\xi+\zeta$ is a positive root
which contradicts condition \eqref{lem6.1}.

{\em Type~$G_{2}$.} This is a short direct computation
which is left to the reader.
\end{proof}

\begin{remark}\label{rem5}
{\rm
The special case of Proposition~\ref{prop4} in the classical
case of finite dimensional $\mathfrak{g}$ is implicit when
comparing the first paragraph of \cite[Section~2]{CP1}
with the first paragraph of \cite[Section~2]{AKOP}.
However, we did not manage to find any explicit proof in 
the literature. An alternative proof of this special
case follows, for example, from \cite[Lemma~1.1(ii)]{CDR}.
}
\end{remark}

Let $\mathfrak{i}$ be a basic ideal. Then
$\mathfrak{i}_+:=\mathfrak{i}\cap \mathfrak{n}_+$
is an ad-nilpotent ideal in $\mathfrak{b}$
(or, equivalently, a $\mathfrak{b}$-submodule
of $\mathfrak{n}_+$). Consider the $\mathfrak{b}$-module
$\mathfrak{g}/\mathfrak{b}$ which is identified, both
as a vector space and an $\mathfrak{h}$-module,
with $\mathfrak{n}_-$ in the natural way.
Denote by $\mathfrak{i}_-$ the $\mathfrak{b}$-submodule
of $\mathfrak{g}/\mathfrak{b}$ which under this
identification corresponds to the direct sum of root
spaces of negative roots $\beta$ such that $\beta+\delta\in
\mathrm{supp}(\mathfrak{i})$. From Theorem~\ref{thm2}
it follows that the pair $(\mathfrak{i}_+,\mathfrak{i}_-)$
determines $\mathfrak{i}$ uniquely.

Denote by $\mathfrak{b}_-$ the opposite Borel
subalgebra $\mathfrak{h}\oplus\mathfrak{n}_-$.
It is easy to see that the $\mathfrak{h}$-complement
of $\mathfrak{i}_-$ in $\mathfrak{n}_-$ is in fact
a $\mathfrak{b}_-$-submodule of $\mathfrak{n}_-$.
Applying the Chevalley involution we obtain that
$\mathfrak{i}$ is uniquely determined by a pair
of ad-nilpotent ideals in $\mathfrak{b}$. 
This implies the following:

\begin{corollary}\label{cor21}
The number of basic ideals in $\hat{\mathfrak{b}}_+$
does not exceed the square of the number of
ad-nilpotent ideals in $\mathfrak{b}$.
\end{corollary}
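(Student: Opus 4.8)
The statement to prove is Corollary~\ref{cor21}: the number of basic ideals in $\hat{\mathfrak{b}}$ is at most the square of the number of ad-nilpotent ideals in $\mathfrak{b}$.

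The plan is to exhibit an injective map from the set of basic ideals into the Cartesian product of the set of ad-nilpotent ideals of $\mathfrak{b}$ with itself. The discussion immediately preceding the corollary already sets this up, so the proof amounts to assembling those observations. First I would recall that by Theorem~\ref{thm2}\eqref{thm2.3} a basic ideal $\mathfrak{i}$ is completely determined by $D\cap\mathrm{supp}(\mathfrak{i})$, and that $D$ splits into the roots of $x\otimes 1$ for $x\in\mathfrak{n}_+$ (which are exactly $\Delta_+$) and the roots of $x\otimes t$ for $x\in\mathfrak{h}\oplus\mathfrak{n}_-$ (which are $\{\delta\}\cup(\Delta_-+\delta)$). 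Accordingly I would split $\mathrm{supp}(\mathfrak{i})\cap D$ into its ``level $0$'' part and its ``level $1$ up to $\delta$'' part, and show each part encodes an ad-nilpotent ideal of $\mathfrak{b}$.

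Concretely, set $\mathfrak{i}_+:=\mathfrak{i}\cap\mathfrak{n}_+$; since $\mathfrak{i}$ is a $\hat{\mathfrak{b}}$-submodule of $\hat{\mathfrak{n}}_+$ and $\mathfrak{n}_+$ is $\mathfrak{b}$-stable, $\mathfrak{i}_+$ is a $\mathfrak{b}$-submodule of $\mathfrak{n}_+$, i.e.\ an ad-nilpotent ideal of $\mathfrak{b}$. For the second coordinate I would use the module $\mathfrak{g}/\mathfrak{b}\cong\mathfrak{n}_-$ and let $\mathfrak{i}_-\subset\mathfrak{n}_-$ be the span of the root spaces $\mathfrak{g}_\beta$ with $\beta\in\Delta_-$ and $\beta+\delta\in\mathrm{supp}(\mathfrak{i})$; the point is that commuting $\hat{\mathfrak{g}}_{\beta+\delta}$ with $\mathfrak{n}_+=\bigoplus_{\gamma\in\Delta_+}\hat{\mathfrak{g}}_\gamma\otimes 1$ inside $\hat{\mathfrak{n}}_+$ translates, after stripping the $\otimes t$, into the action of $\mathfrak{b}$ on $\mathfrak{g}/\mathfrak{b}$, so that $\mathfrak{i}_-$ is a $\mathfrak{b}$-submodule of $\mathfrak{g}/\mathfrak{b}$. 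Taking the $\mathfrak{h}$-stable complement of $\mathfrak{i}_-$ in $\mathfrak{n}_-$ gives a $\mathfrak{b}_-$-submodule of $\mathfrak{n}_-$, and applying the Chevalley involution turns it into an ad-nilpotent ideal $\mathfrak{i}_-'$ of $\mathfrak{b}$. By Theorem~\ref{thm2}\eqref{thm2.3} together with Theorem~\ref{thm1}\eqref{thm1.0} (which fixes the part of the support lying in $D'$), the pair $(\mathfrak{i}_+,\mathfrak{i}_-')$ recovers $\mathrm{supp}(\mathfrak{i})$ and hence $\mathfrak{i}$, so the assignment $\mathfrak{i}\mapsto(\mathfrak{i}_+,\mathfrak{i}_-')$ is injective into (ad-nilpotent ideals of $\mathfrak{b}$) $\times$ (ad-nilpotent ideals of $\mathfrak{b}$), giving the claimed bound.

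The only genuinely non-routine point is verifying that $\mathfrak{i}_-$ really is a $\mathfrak{b}$-submodule of $\mathfrak{g}/\mathfrak{b}$ — that is, checking the bracket relations $[\hat{\mathfrak{g}}_\gamma\otimes 1,\hat{\mathfrak{g}}_\beta\otimes t]$ for $\gamma\in\Delta_+$, $\beta\in\Delta_-$ land in the right place and match the $\mathfrak{g}/\mathfrak{b}$-action, including correctly handling the cases where $\gamma+\beta\in\Delta_+$ or $\gamma+\beta=0$ (these become zero in $\mathfrak{g}/\mathfrak{b}$, which is consistent with the fact that such roots plus $\delta$ either lie in $D'$ and are automatically in $\mathrm{supp}(\mathfrak{i})$, or are not real and need separate attention). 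Everything else is a direct translation of the structure already established in Theorems~\ref{thm1} and~\ref{thm2}. I would therefore keep the proof short, essentially just recording the map and invoking those two theorems for injectivity.
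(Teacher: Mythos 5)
Your proof is correct and takes essentially the same route as the paper: define $\mathfrak{i}_+:=\mathfrak{i}\cap\mathfrak{n}_+$ and $\mathfrak{i}_-\subset\mathfrak{g}/\mathfrak{b}$ via the shift by $\delta$, observe that the pair determines $\mathfrak{i}$, then pass the complement of $\mathfrak{i}_-$ through the Chevalley involution to land in ad-nilpotent ideals of $\mathfrak{b}$. Your extra remark about which brackets vanish in $\mathfrak{g}/\mathfrak{b}$ is a correct fleshing-out of a step the paper treats as immediate.
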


Later on we will see that not every pair of
ad-nilpotent ideals in $\mathfrak{b}$ corresponds
in this way to a basic ideals in $\hat{\mathfrak{b}}_+$.
In the next section we exploit this connection
to enumerate basic ideals in $\hat{\mathfrak{b}}_+$
for the affine Lie algebra $\hat{\mathfrak{sl}}_n$.

\section{Enumeration of basic ideals in type $A$}\label{s3}

\subsection{Some matrix combinatorics}\label{s3.1}

Let $\Bbbk$ be a commutative ring with $1$. For
$n\in\mathbb{N}$ consider the $\Bbbk$-algebra
$\mathrm{M}_n(\Bbbk)$ of all $n\times n$ matrices
with coefficients from $\Bbbk$. We will denote 
elements of $\mathrm{M}_n(\Bbbk)$ in the following
standard way:
\begin{displaymath}
A=\left(A_{i,j}\right)=\left(a_{i,j}\right)=
\left(\begin{array}{cccc}
a_{1,1}&a_{1,2}&\dots&a_{1,n}\\
a_{2,1}&a_{2,2}&\dots&a_{2,n}\\
\vdots&\vdots&\ddots&\vdots\\
a_{n,1}&a_{n,2}&\dots&a_{n,n}\\
\end{array}\right)
\end{displaymath}
and for such notation the indices $i$ and
$j$ always run through the set $\{1,2,\dots,n\}$.

For $A,B\in \mathrm{M}_n(\Bbbk)$ set
$A\cdot B=\sum_{i,j}a_{i,j}b_{i,j}$. Then 
$\cdot:\mathrm{M}_n(\Bbbk)\times\mathrm{M}_n(\Bbbk) 
\to \Bbbk$ is a bilinear form on $\mathrm{M}_n(\Bbbk)$.
Define the linear operator $\omega$ on
$\mathrm{M}_n(\Bbbk)$ by setting
\begin{displaymath}
(\omega\,A)_{i,j}=
\sum_{k=n-i}^n\,\,\sum_{m=n-j}^n a_{k,m}
\end{displaymath}
(here we assume that $a_{0,m}=a_{k,0}=0$).
For example, we have
\begin{displaymath}
\omega\,\left(\begin{array}{ccc}1&2&3\\4&5&6\\7&8&9\end{array}\right)=
\left(\begin{array}{ccc}28&33&33\\
39&45&45\\39&45&45\end{array}\right).
\end{displaymath}

We will also need another linear operator 
$\tau$ on $\mathrm{M}_n(\Bbbk)$ defined as follows:
\begin{equation}\label{eq5}
(\tau\,A)_{i,j}=
\sum_{s=i-1}^n a_{s,j}
\end{equation}
(here we assume $a_{0,j}=0$). For example, 
we have
\begin{displaymath}
\tau\,\left(\begin{array}{ccc}1&2&3\\4&5&6\\
7&8&9\end{array}\right)=
\left(\begin{array}{ccc}12&15&18\\12&15&18\\
11&13&15\end{array}\right).
\end{displaymath}

\subsection{Some Dyck path combinatorics}\label{s3.2}

In this subsection we collect some necessary elementary
combinatorics of Dyck paths (most of which we failed
to find in the literature, but we are not going to be 
surprised if it exists). For $n\in\mathbb{N}$ define 
the integral matrix $\mathbf{C}_n\in
\mathrm{M}_n(\mathbb{Z})$ recursively as follows:
\begin{equation}\label{eq4}
\mathbf{C}_1:=(1);\quad
\mathbf{C}_{n+1}=\big(\tau \, \mathbf{C}_{n}\big)
\oplus \mathbf{C}_1.
\end{equation}
For small values of $n$ we have:
\begin{gather*}
\mathbf{C}_1:=\left(\begin{array}{c}1\end{array}\right);\quad
\mathbf{C}_2:=\left(\begin{array}{cc}
1&0\\0&1\end{array}\right);\quad
\mathbf{C}_3:=\left(\begin{array}{ccc}
1&1&0\\1&1&0\\0&0&1\end{array}\right);\\
\mathbf{C}_4:=\left(\begin{array}{cccc}
2&2&1&0\\2&2&1&0\\1&1&1&0\\0&0&0&1\end{array}\right);\quad
\mathbf{C}_5:=\left(\begin{array}{ccccc}
5&5&3&1&0\\5&5&3&1&0\\3&3&2&1&0\\1&1&1&1&0\\
0&0&0&0&1\end{array}\right).
\end{gather*}
From the definition it is clear that all
$\mathbf{C}_n$ have non-negative coefficients.
What is much less clear, but suggested by the above
examples, is that every matrix $\mathbf{C}_n$ is symmetric.
We will prove this later in Corollary~\ref{cor8}. 
We denote the coefficient
$(\mathbf{C}_n)_{i,j}$ by $\mathbf{c}_{i,j}^{(n)}$.

Recall that for $n\in\mathbb{N}_0$ a {\em Dyck path}
of semilength $n$ is a path in the first quadrant of the
coordinate $(x,y)$-plane from $(0,0)$ to $(2n,0)$ with steps
along  $(1,1)$, called {\em rises}, and 
along  $(1,-1)$, called {\em falls}. A {\em peak} of
a Dyck path is the end point of a rise followed by a fall.
A {\em valley} of a Dyck path is the end point of a 
fall followed by a rise. The $y$-coordinate of a
point is usually called the {\em height}. A Dyck path
is called {\em primitive} if $(0,0)$ and $(2n,0)$
are the only points from the $x$-axis which belong to the
path. For a Dyck path $p$ we denote by $\mathbf{v}(p)$
and $\mathbf{p}(p)$ the number of valleys and peaks
in $p$, respectively. By $\mathbf{p}_i(p)$, $i=1,\dots,n$,
we also denote the number of peaks of height at least
$i+1$. By $\mathbf{v}_{(i)}(p)$, $i=0,\dots,n-1$,
we denote the number of valleys of height $i$ in $p$. 
By $\mathtt{v}_{(i)}(p)$, $i=0,\dots,n-1$,
we denote the set of valleys of height $i$ in $p$, in particular,
$\mathbf{v}_{(i)}(p)=|\mathtt{v}_{(i)}(p)|$. Rises, falls,
peaks and valleys are counted from the left to the right,
e.g. the first peak is the leftmost peak and
the last valley is the rightmost valley. Note that
every Dyck path has at least one peak, but it may
contain no valleys. Further, every Dyck path starts
with a rise and ends with a fall.
We denote by $\mathcal{D}_n$
the set of all Dyck paths of semilength $n$.
It is well-known (see for example \cite[Chapter~6]{St})
that $|\mathcal{D}_n|$ is the $n$-th Catalan
number $C_n=\frac{1}{n+1}\binom{2n}{n}$.
The set $\mathcal{D}_n$ is partially ordered 
in the natural way with 
respect to the relation $p\leq q$ defined as
follows: path $q$ never goes below path $p$.

A natural way to encode Dyck paths is by the corresponding 
sequence of rises and falls, e.g. $rrfrffrf$
(the condition is that the sequence contains $n$ rises
and $n$ falls and each prefix of the sequence contains
at least as many rises as falls). See an example
in Figure~\ref{fig1}. We denote by $*$ the usual 
involution on Dyck paths defined by reversing them 
and swapping rises and  falls (which corresponds to 
reflecting the Dyck path at the vertical line given by the
equation $x=n$), for example
\begin{displaymath}
(rrrfrrffff)^*=rrrrffrfff. 
\end{displaymath}
We denote by $\mathbf{p}$ the unique Dyck paths with no valleys,
that is the path of the form $rr\dots rff\dots f$.
We also denote by $\mathbf{q}$ the unique Dyck paths with $n$ peaks,
that is the path of the form $rfrf\dots rf$. The paths 
$\mathbf{p}$ and $\mathbf{q}$ are the maximum and the minimum elements
of $\mathcal{D}_n$ with respect to the partial order $\leq$, respectively.

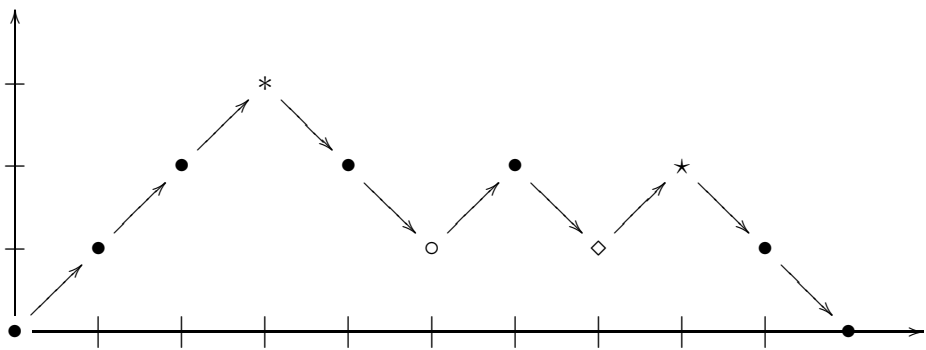
\begin{figure}
\begin{displaymath}
\xymatrix@!=0.6pc{
&&&&&&&&&\\
-&&&\ast\ar[rd]&&&&&&&\\
-&&\bullet\ar[ru]&&\bullet\ar[rd]&&
\bullet\ar[rd]&&\star\ar[rd]&&&\\
-&\bullet\ar[ru]&&&&\circ\ar[ru]&&
\diamond\ar[ru]&&\bullet\ar[rd]&&\\
\bullet\ar[ru]\ar[rrrrrrrrrrr]\ar[uuuu]
&\vert&\vert&\vert&\vert&\vert&
\vert&\vert&\vert&\vert&\bullet&
}
\end{displaymath}
\caption{The Dyck path 
$rrrffrfrff\in \mathcal{D}_5(3,2)$, the first peak
is $\ast$ (of height $3$), the last peak is $\star$
(of height $2$), the first valley
is $\circ$, the last valley is $\diamond$
(both valleys have height $1$)}\label{fig1} 
\end{figure}

The height of a peak of a Dyck path of semilength 
$n$ is a positive integer between $1$ and $n$.
For $i,j\in\{1,2,\dots,n\}$ denote by $\mathcal{D}_n(i,j)$
the set of all Dyck paths for which $i$ is the height
of the first peak and $j$ is the height of the
last peak (note that some of the $\mathcal{D}_n(i,j)$
might be empty). We also set 
$\mathcal{D}_n(0,j)=\varnothing$ for all $j$.

\begin{theorem}\label{thm7}
For all $n\in\mathbb{N}$ and $i,j\in\{1,2,\dots,n\}$ 
we have 
\begin{displaymath}
|\mathcal{D}_n(i,j)|=\mathbf{c}_{i,j}^{(n)}. 
\end{displaymath}
\end{theorem}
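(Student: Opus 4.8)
The plan is to prove the identity $|\mathcal{D}_n(i,j)|=\mathbf{c}^{(n)}_{i,j}$ by induction on $n$, matching the recursion \eqref{eq4} defining $\mathbf{C}_{n+1}=(\tau\,\mathbf{C}_n)\oplus\mathbf{C}_1$ with a natural recursive decomposition of Dyck paths of semilength $n+1$ according to the height of their first peak. The base case $n=1$ is immediate since $\mathcal{D}_1$ consists of the single path $rf$, whose first and last peak both have height $1$, so $|\mathcal{D}_1(1,1)|=1=\mathbf{c}^{(1)}_{1,1}$. For the direct sum block: the only Dyck path of semilength $n+1$ whose first peak has height $n+1$ is $\mathbf{p}=r^{n+1}f^{n+1}$, which also has last peak of height $n+1$, and this is the only path with last peak of height $n+1$; this accounts for the $\mathbf{C}_1=(1)$ block sitting in position $(n+1,n+1)$ and the vanishing of $\mathbf{c}^{(n+1)}_{i,n+1}$ and $\mathbf{c}^{(n+1)}_{n+1,j}$ for $i,j\le n$.

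The heart of the argument is to show that, for $1\le i,j\le n$, the number $|\mathcal{D}_{n+1}(i,j)|$ equals $(\tau\,\mathbf{C}_n)_{i,j}=\sum_{s=i-1}^{n}\mathbf{c}^{(n)}_{s,j}=\sum_{s=i-1}^n|\mathcal{D}_n(s,j)|$, where we read $\mathbf{c}^{(n)}_{0,j}=|\mathcal{D}_n(0,j)|=0$. I would set up a bijection between $\mathcal{D}_{n+1}(i,j)$ and $\bigsqcup_{s=i-1}^{n}\mathcal{D}_n(s,j)$. Given a path $p\in\mathcal{D}_{n+1}$ with first peak of height $i\ge 1$, its initial segment is $r^i f^{?}\cdots$; the natural move is to delete one rise from the initial run and one fall from the first descent (equivalently, lower the first peak by one), producing a path $p'$ of semilength $n$. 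One must check that $p'$ is still a legitimate Dyck path (it is, since removing the topmost rise-fall pair at the first peak keeps all partial sums nonnegative), that the last peak height $j$ is unchanged (true as long as the first peak is not the only peak, and the boundary/degenerate cases are handled by the index range), and that the new first-peak height $s$ ranges exactly over $\{i-1,i,\dots,n\}$: it is $i-1$ when the original second peak had height $<i-1$ or $p$ had a valley immediately, and can be as large as any $s\ge i-1$ depending on whether the descent after the original first peak returns low or the path continues climbing after a short dip — conversely, given $p'\in\mathcal{D}_n(s,j)$, one inserts a fresh rise-fall pair to create a new first peak of height exactly $i$, which is possible precisely when $s\ge i-1$, and this insertion is uniquely determined. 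Carefully pinning down this insertion/deletion correspondence and verifying it is a bijection onto the claimed disjoint union is the main obstacle; the degenerate cases (paths consisting of a single peak, paths where first and last peak coincide) need separate but easy checks, and they are exactly what forces the lower summation limit to be $i-1$ rather than $i$.

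Once the bijection is established, the inductive step follows: for $1\le i,j\le n$ we get $|\mathcal{D}_{n+1}(i,j)|=\sum_{s=i-1}^n|\mathcal{D}_n(s,j)|=(\tau\,\mathbf{C}_n)_{i,j}=\mathbf{c}^{(n+1)}_{i,j}$, and for the last row and column ($i=n+1$ or $j=n+1$) the explicit description of the path $\mathbf{p}$ above gives $|\mathcal{D}_{n+1}(n+1,n+1)|=1$ and all other entries zero, matching the $\oplus\mathbf{C}_1$ block. This completes the induction and hence the proof. A clean way to organize the writeup is to first record the two structural lemmas — (1) $\mathbf{p}$ is the unique path with first (equivalently last) peak of maximal height, and (2) the lowering map on the first peak is a bijection $\mathcal{D}_{n+1}(i,j)\xrightarrow{\sim}\bigsqcup_{s\ge i-1}\mathcal{D}_n(s,j)$ for $i\le n$ — and then assemble them against \eqref{eq4}. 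I expect no genuinely deep input is needed beyond the combinatorial bookkeeping; the only place to be careful is ensuring the summation range and the treatment of $\mathcal{D}_n(0,j)=\varnothing$ are consistent with the definition \eqref{eq5} of $\tau$, where $a_{0,j}=0$ is assumed.
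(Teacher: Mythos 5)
Your proposal takes essentially the same approach as the paper's proof: an induction on the semilength, reducing to the matrix recursion $\mathbf{C}_{n+1}=(\tau\,\mathbf{C}_n)\oplus\mathbf{C}_1$ via a bijection that deletes/reinserts the first peak (i.e., the leftmost $rf$), with the extremal path $\mathbf{p}$ handling the last row and column. The paper's version of your bijection is stated more tersely (``delete the first peak'' / ``insert an $rf$ after the first $i-1$ rises'') and, once phrased that way, the range $s\ge i-1$ and the preservation of the last peak height are immediate, so the degenerate cases you flag as needing ``separate checks'' dissolve; but the substance is the same.
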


\begin{proof}
We proceed by induction on $n$. The basis $n=1$
is trivial. Further, note that
$\mathcal{D}_n(n,j)$ is non-empty only in the case
$j=n$, moreover $\mathcal{D}_n(n,n)=\{\mathbf{p}\}$. 
Similarly for $\mathcal{D}_n(i,n)$. Hence the statement of 
our theorem is true for all $n$ in all cases where $
i=n$ or $j=n$ as by construction of $\mathbf{C}_n$
we have $\mathbf{c}_{n,n}^{(n)}=1$ and $\mathbf{c}_{i,n}^{(n)}=0=
\mathbf{c}_{n,j}^{(n)}$ for all $i,j\neq n$.

Define the map
\begin{displaymath}
F: \mathcal{D}_n(i,j)\to \bigcup_{s=i-1}^{n-1}
\mathcal{D}_{n-1}(s,j)
\end{displaymath}
as follows: given a Dyck path in $\mathcal{D}_n(i,j)$,
delete the first peak in this Dyck path,
that is the leftmost occurrence of $rf$.
Define the map
\begin{displaymath}
G: \bigcup_{s=i-1}^{n-1}
\mathcal{D}_{n-1}(s,j)\to\mathcal{D}_n(i,j)
\end{displaymath} 
as follows: given a Dyck path in 
$\cup_{s=i-1}^{n-1} \mathcal{D}_{n-1}(s,j)$,
insert a peak after the first $i-1$ rises,
that is an $rf$ after the first $i-1$ letters
$r$ of the path. It is straightforward to verify
that $F$ and $G$ are mutually inverse bijections,
and hence
\begin{equation}\label{eq3}
\left|\bigcup_{s=i-1}^{n-1}
\mathcal{D}_{n-1}(s,j)\right|=
\left|\mathcal{D}_n(i,j)\right|
\end{equation} 
by the bijection rule. As the union on the left hand side
is disjoint, from the inductive assumption we have
\begin{displaymath}
\begin{array}{rcl}
\left|\mathcal{D}_n(i,j)\right|&
\overset{\eqref{eq3}}{=}&\displaystyle
\left|\bigcup_{s=i-1}^{n-1}
\mathcal{D}_{n-1}(s,j)\right|\\
&\overset{\text{(by induction)}}{=}&\displaystyle
\sum_{s=i-1}^{n-1}\mathbf{c}_{s,j}^{(n-1)}\\
&\overset{\eqref{eq4}\text{ and }\eqref{eq5}}{=}&
\mathbf{c}_{i,j}^{(n)}.
\end{array}
\end{displaymath} 
The proof is complete.
\end{proof}

Various enumeration problems involving Dyck paths
were considered in \cite{De}, in particular, there
one can find a formula for enumeration with 
respect to the height of the first peak.

\begin{corollary}\label{cor8}
The matrix $\mathbf{C}_n$ is symmetric
for every $n\in\mathbb{N}$ and the sum of all
entries in $\mathbf{C}_n$ equals $C_n$.
\end{corollary}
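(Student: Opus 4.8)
The plan is to deduce Corollary~\ref{cor8} directly from Theorem~\ref{thm7}, reading off both assertions from the combinatorial interpretation $|\mathcal D_n(i,j)|=\mathbf c_{i,j}^{(n)}$.

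\medskip

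\noindent
\emph{Symmetry.} First I would observe that the involution $*$ on Dyck paths (reversal together with swapping rises and falls) interchanges the first peak and the last peak, and sends a peak of height $h$ to a peak of height $h$. Indeed, reflecting the path of semilength $n$ at the vertical line $x=n$ turns the leftmost peak into the rightmost peak while preserving its height. Hence $*$ restricts to a bijection $\mathcal D_n(i,j)\xrightarrow{\ \sim\ }\mathcal D_n(j,i)$ for all $i,j\in\{1,\dots,n\}$, so $|\mathcal D_n(i,j)|=|\mathcal D_n(j,i)|$. By Theorem~\ref{thm7} this gives $\mathbf c_{i,j}^{(n)}=\mathbf c_{j,i}^{(n)}$, i.e.\ $\mathbf C_n$ is symmetric.

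\medskip

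\noindent
\emph{Sum of entries.} Next I would note that every Dyck path of semilength $n$ has a well-defined first peak and last peak, each of some height between $1$ and $n$; that is, the sets $\mathcal D_n(i,j)$ with $i,j\in\{1,\dots,n\}$ partition $\mathcal D_n$. Summing cardinalities over this partition and applying Theorem~\ref{thm7},
\begin{displaymath}
\sum_{i,j=1}^{n}\mathbf c_{i,j}^{(n)}
=\sum_{i,j=1}^{n}|\mathcal D_n(i,j)|
=|\mathcal D_n|=C_n,
\end{displaymath}
where the last equality is the classical fact recalled in the excerpt that $|\mathcal D_n|=C_n$.

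\medskip

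\noindent
There is no real obstacle here: both statements are immediate once Theorem~\ref{thm7} is available. The only point requiring a word of care is that the first and last peak heights are genuinely defined for \emph{every} Dyck path — which holds because every Dyck path has at least one peak, as already noted in the text — so that the $\mathcal D_n(i,j)$, $1\le i,j\le n$, really do cover all of $\mathcal D_n$ with no omissions. One should also remark that $\mathcal D_n(0,j)=\varnothing$ was only an auxiliary convention and plays no role in the count.
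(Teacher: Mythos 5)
Your proof is correct and matches the paper's argument exactly: the paper likewise deduces the sum formula directly from Theorem~\ref{thm7} and the symmetry by applying the involution $*$. You merely spell out the details that the paper leaves implicit.
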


\begin{proof}
The second claim follows directly from
Theorem~\ref{thm7}. The first claim  
follows from Theorem~\ref{thm7} applying the
involution $*$.
\end{proof}

The entries of $\mathbf{C}_n$ are directly related
to several classical combinatorial objects associated
with the combinatorics of Catalan numbers. Our first
observation is a relation to what is known as the
{\em Catalan triangle} (sequence A009766 in
\cite{OEIS}), see \cite{Ba}. It is defined, in analogy 
with Pascal's triangle, as the following triangular array
of integers with the property that every entry equals
the sum of the entry above and the entry to the left:
\begin{displaymath}
\begin{array}{cccccc}
1&&&&&\\ 
1&1&&&&\\ 
1&2&2&&&\\ 
1&3&5&5&&\\ 
1&4&9&14&14&\\ 
\vdots&\vdots&\vdots&\vdots&\vdots&\ddots\\ 
\end{array}
\end{displaymath}
The entry $c_{i,j}$ in the $i$-th row and the
$j$-th column equals
\begin{displaymath}
c_{i,j}=\frac{(i+j)!(i-j+1)}{j!(i+1)!}. 
\end{displaymath}
From the definition of $\mathbf{C}_n$ it follows that
the first two rows of $\mathbf{C}_n$ coincide.
These coinciding rows have the following interpretation
in terms of the Catalan triangle:

\begin{proposition}\label{prop11}
For $n\geq 2$ and any $j\in\{1,2,\dots,n-1\}$
we have $\mathbf{c}_{1,j}^{(n)}=c_{n-2,n-1-j}$.
\end{proposition}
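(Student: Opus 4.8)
The plan is to evaluate $\mathbf{c}_{1,j}^{(n)}$ combinatorially via Theorem~\ref{thm7} and then recognise the outcome as a Catalan-triangle entry. By Theorem~\ref{thm7} we have $\mathbf{c}_{1,j}^{(n)}=|\mathcal{D}_n(1,j)|$. Since $1\leq j\leq n-1$, any path in $\mathcal{D}_n(1,j)$ begins with $rf$ and is back on the $x$-axis after these two steps, so deleting the initial $rf$ yields a nonempty Dyck path $w$ of semilength $n-1$. The last peak of the original path lies in $w$ (the remaining path has at least one peak), and its height is unchanged, so this deletion is a bijection from $\mathcal{D}_n(1,j)$ onto the set of Dyck paths of semilength $n-1$ whose last peak has height $j$. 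Applying the involution $*$, which interchanges the heights of the first and last peaks, we conclude that $\mathbf{c}_{1,j}^{(n)}$ equals the number of Dyck paths of semilength $m:=n-1$ whose first peak has height $j$; and such a path is precisely $r^jf$ followed by a lattice path $u$ with unit steps $\pm1$ of length $2m-j-1=2n-j-3$, from height $j-1$ to height $0$, that never drops below $0$.

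Next I would count the paths $u$ by the reflection principle. There are $\binom{2n-j-3}{n-1-j}$ unrestricted lattice paths of length $2n-j-3$ from height $j-1$ to height $0$ (they use $n-1-j$ up-steps); reflecting, in the line $y=-1$, the initial segment of such a path up to its first visit of height $-1$ identifies the ``bad'' paths with all lattice paths of the same length from height $-j-1$ to height $0$, of which there are $\binom{2n-j-3}{n-1}$. Therefore
\begin{displaymath}
\mathbf{c}_{1,j}^{(n)}=\binom{2n-j-3}{n-1-j}-\binom{2n-j-3}{n-1}.
\end{displaymath}
Rewriting $\binom{2n-j-3}{n-1-j}=\binom{2n-j-3}{n-2}$, putting both terms over the common factor $(2n-j-3)!\,/\big((n-2-j)!\,(n-2)!\big)$ and simplifying, one finds $\mathbf{c}_{1,j}^{(n)}=\dfrac{(2n-j-3)!\;j}{(n-1-j)!\,(n-1)!}$. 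On the other hand, the displayed formula for the Catalan triangle evaluated at row $i=n-2$ and column $j'=n-1-j$ reads $\dfrac{(i+j')!\,(i-j'+1)}{j'!\,(i+1)!}=\dfrac{(2n-j-3)!\;j}{(n-1-j)!\,(n-1)!}$, since $i+j'=2n-j-3$ and $i-j'+1=j$. The two expressions coincide, which proves the proposition.

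The steps using Theorem~\ref{thm7} and the involution $*$ are routine; the real bookkeeping is concentrated in the reflection-principle count (keeping the rather awkward indices straight) and the final factorial identity, and one should check the boundary value $j=n-1$, where $u$ is forced to be the all-falls path and both sides equal $1$. I would also note that a purely recursive argument, pitting \eqref{eq4} against the additive recursion $c_{i,j}=c_{i-1,j}+c_{i,j-1}$ of the Catalan triangle, does not close on its own: \eqref{eq4} expresses the first row of $\mathbf{C}_n$ through column sums of \emph{all} of $\mathbf{C}_{n-1}$, not merely through its first row, so the Dyck-path model (or an equivalent summation identity) seems to be the most economical route.
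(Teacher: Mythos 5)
Your argument is correct, and it is genuinely different from the paper's. The paper proves the proposition by recursion: after checking the base case $j=n-1$, it shows that $\mathbf{c}_{1,j}^{(n)}$ satisfies the same Pascal-type recursion as the Catalan-triangle entries, and it establishes the recursion $\mathbf{c}_{1,j}^{(n)}=\mathbf{c}_{1,j+1}^{(n)}+\mathbf{c}_{1,j-1}^{(n-1)}$ via an explicit bijection between $\mathcal{D}_n(2,j)$ (which has the same cardinality as $\mathcal{D}_n(1,j)$ because the first two rows of $\mathbf{C}_n$ coincide) and $\mathcal{D}_n(1,j+1)\cup\mathcal{D}_{n-1}(1,j-1)$, splitting a path according to whether it is primitive. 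You instead compute $\mathbf{c}_{1,j}^{(n)}$ in closed form: strip the initial $rf$, apply the involution $*$, strip $r^jf$, and apply the reflection principle to get $\binom{2n-j-3}{n-2}-\binom{2n-j-3}{n-1}=\frac{(2n-j-3)!\,j}{(n-1-j)!\,(n-1)!}$, which then matches $c_{n-2,n-1-j}$ after an easy factorial identity. All the bijections and the reflection count check out, as does the boundary case $j=n-1$. Your approach in fact anticipates the reflection-principle technique the paper uses later in Proposition~\ref{prop23} to give a general closed form for $\mathbf{c}_{i,j}^{(n)}$ (and one could alternatively deduce Proposition~\ref{prop11} by specializing that formula to $i=1$). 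The trade-off is that the paper's bijective recursion stays entirely inside Dyck-path combinatorics and is somewhat more illuminating about \emph{why} the Catalan triangle appears, while your argument is shorter and self-contained but requires the algebraic bookkeeping with binomials. Your closing remark is also accurate: the matrix recursion \eqref{eq4} alone expresses row one of $\mathbf{C}_n$ via column sums of all of $\mathbf{C}_{n-1}$, not via its first row, so the Dyck-path model (through Theorem~\ref{thm7}) is needed one way or another — the paper uses it to prove the recursion, you use it to prove the closed form directly.
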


\begin{proof}
For $j=n-1$ the claim follows from the definitions.
Hence it is enough to show that the entries of
the first row of $\mathbf{C}_n$ satisfy
(with respect to the first row of $\mathbf{C}_{n-1}$)
the same recursion. By Theorem~\ref{thm7}, for this
it is enough to produce a bijection between 
$\mathcal{D}_n(2,j)$ and
\begin{displaymath}
\mathcal{D}_n(1,j+1)\cup\mathcal{D}_{n-1}(1,j-1). 
\end{displaymath}

Denote by $X$ the set of all primitive Dyck paths
in $\mathcal{D}_n(2,j)$ and set $Y:=\mathcal{D}_n(2,j)\setminus X$. Every $p\in X$
has the form $rqf$ for some path 
$q\in \mathcal{D}_{n-1}(1,j-1)$ and vice versa,
for every $q\in \mathcal{D}_{n-1}(1,j-1)$ the path
$rqf$ belongs to $X$ giving us a bijection between
$X$ and $\mathcal{D}_{n-1}(1,j-1)$.

Every Dyck path $p\in Y$ has the form 
$r(rfrxf)\mathbf{f}\mathbf{r}yrf^j$, where the bold
valley $\mathbf{f}\mathbf{r}$ is the first return 
of $p$ to the diagonal. Define the map
$\varphi:Y\to \mathcal{D}_{n}(1,j+1)$ via
\begin{displaymath}
r(rfrxf)fryrf^j\mapsto
rfryr(rfrxf)^*f^j.
\end{displaymath}
We claim that $\varphi$ is a bijection, the inverse
of which is defined as follows: Every path 
$q\in \mathcal{D}_{n}(1,j+1)$ has the form
$rfrv\mathbf{r}\mathbf{r}wfrf^{j+1}$, where
the bold $\mathbf{r}\mathbf{r}$ is the last 
crossing of height $j$. The map 
$\varphi^{-1}:Y\to \mathcal{D}_{n}(1,j+1)$ is then
defined via:
\begin{displaymath}
rfrvrrwfrf^{j+1}\mapsto r(rwfrf)^*frvrf^j.
\end{displaymath}
This bijection is illustrated in Figure~\ref{fig7}.
The claim follows.
\end{proof}

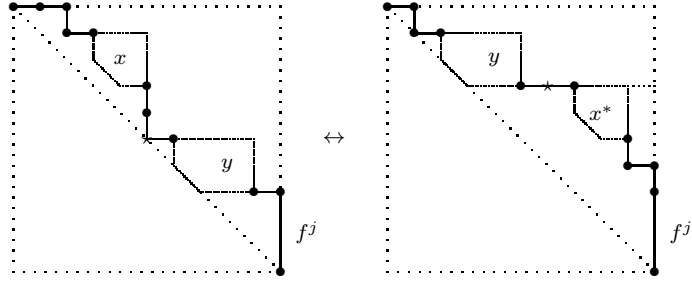
\begin{figure}
\begin{picture}(270.00,130.00)
\dottedline{4}(10.00,10.00)(110.00,10.00)
\dottedline{4}(110.00,110.00)(110.00,10.00)
\dottedline{4}(110.00,110.00)(10.00,110.00)
\dottedline{4}(10.00,10.00)(10.00,110.00)
\dottedline{4}(10.00,110.00)(110.00,10.00)
\dottedline{4}(150.00,10.00)(150.00,110.00)
\dottedline{4}(250.00,110.00)(150.00,110.00)
\dottedline{4}(250.00,110.00)(250.00,10.00)
\dottedline{4}(150.00,10.00)(250.00,10.00)
\dottedline{4}(150.00,110.00)(250.00,10.00)
%%%%%%%%%%%%%%%%%%%%%%%%%%%%%%%%%%
\put(130.00,60.00){\makebox(0,0)[cc]{\tiny $\leftrightarrow$}}
\put(50.00,90.00){\makebox(0,0)[cc]{\tiny $x$}}
\put(230.00,70.00){\makebox(0,0)[cc]{\tiny $x^*$}}
\put(90.00,50.00){\makebox(0,0)[cc]{\tiny $y$}}
\put(190.00,90.00){\makebox(0,0)[cc]{\tiny $y$}}
\put(120.00,25.00){\makebox(0,0)[cc]{\tiny $f^j$}}
\put(260.00,25.00){\makebox(0,0)[cc]{\tiny $f^j$}}
\put(10.00,110.00){\makebox(0,0)[cc]{\tiny $\bullet$}}
\put(20.00,110.00){\makebox(0,0)[cc]{\tiny $\bullet$}}
\put(30.00,110.00){\makebox(0,0)[cc]{\tiny $\bullet$}}
\put(30.00,100.00){\makebox(0,0)[cc]{\tiny $\bullet$}}
\put(40.00,100.00){\makebox(0,0)[cc]{\tiny $\bullet$}}
\put(60.00,80.00){\makebox(0,0)[cc]{\tiny $\bullet$}}
\put(60.00,70.00){\makebox(0,0)[cc]{\tiny $\bullet$}}
\put(60.00,60.00){\makebox(0,0)[cc]{\tiny $\star$}}
\put(70.00,60.00){\makebox(0,0)[cc]{\tiny $\bullet$}}
\put(100.00,40.00){\makebox(0,0)[cc]{\tiny $\bullet$}}
\put(110.00,40.00){\makebox(0,0)[cc]{\tiny $\bullet$}}
\put(110.00,10.00){\makebox(0,0)[cc]{\tiny $\bullet$}}
\put(150.00,110.00){\makebox(0,0)[cc]{\tiny $\bullet$}}
\put(160.00,110.00){\makebox(0,0)[cc]{\tiny $\bullet$}}
\put(160.00,100.00){\makebox(0,0)[cc]{\tiny $\bullet$}}
\put(170.00,100.00){\makebox(0,0)[cc]{\tiny $\bullet$}}
\put(200.00,80.00){\makebox(0,0)[cc]{\tiny $\bullet$}}
\put(210.00,80.00){\makebox(0,0)[cc]{\tiny $\star$}}
\put(220.00,80.00){\makebox(0,0)[cc]{\tiny $\bullet$}}
\put(240.00,60.00){\makebox(0,0)[cc]{\tiny $\bullet$}}
\put(240.00,50.00){\makebox(0,0)[cc]{\tiny $\bullet$}}
\put(250.00,50.00){\makebox(0,0)[cc]{\tiny $\bullet$}}
\put(250.00,40.00){\makebox(0,0)[cc]{\tiny $\bullet$}}
\put(250.00,10.00){\makebox(0,0)[cc]{\tiny $\bullet$}}
%%%%%%%%%%%%%%%%%%%%%%%%%%%%%%%%%%
\drawline(10.00,110.00)(30.00,110.00)
\drawline(30.00,100.00)(30.00,110.00)
\drawline(30.00,100.00)(40.00,100.00)
\drawline(60.00,80.00)(60.00,60.00)
\drawline(70.00,60.00)(60.00,60.00)
\drawline(100.00,40.00)(110.00,40.00)
\drawline(110.00,10.00)(110.00,40.00)
\drawline(150.00,110.00)(160.00,110.00)
\drawline(160.00,100.00)(160.00,110.00)
\drawline(160.00,100.00)(170.00,100.00)
\drawline(200.00,80.00)(220.00,80.00)
\drawline(240.00,60.00)(240.00,50.00)
\drawline(250.00,50.00)(240.00,50.00)
\drawline(250.00,50.00)(250.00,10.00)
%%%%%%%%%%%%%%%%%%%%%%%%%%%%%%%
\dottedline{1}(40.00,100.00)(60.00,100.00)
\dottedline{1}(60.00,80.00)(60.00,100.00)
\dottedline{1}(60.00,80.00)(50.00,80.00)
\dottedline{1}(40.00,90.00)(50.00,80.00)
\dottedline{1}(40.00,90.00)(40.00,100.00)
\dottedline{1}(70.00,60.00)(100.00,60.00)
\dottedline{1}(100.00,40.00)(100.00,60.00)
\dottedline{1}(100.00,40.00)(80.00,40.00)
\dottedline{1}(70.00,50.00)(80.00,40.00)
\dottedline{1}(70.00,50.00)(70.00,60.00)
\dottedline{1}(170.00,100.00)(200.00,100.00)
\dottedline{1}(200.00,80.00)(200.00,100.00)
\dottedline{1}(200.00,80.00)(180.00,80.00)
\dottedline{1}(170.00,90.00)(180.00,80.00)
\dottedline{1}(170.00,90.00)(170.00,100.00)
\dottedline{1}(220.00,80.00)(240.00,80.00)
\dottedline{1}(240.00,60.00)(240.00,80.00)
\dottedline{1}(240.00,60.00)(230.00,60.00)
\dottedline{1}(220.00,70.00)(230.00,60.00)
\dottedline{1}(220.00,70.00)(220.00,80.00)
\dottedline{2}(210.00,80.00)(250.00,80.00)
\end{picture}
\caption{Second bijection
in the proof of Proposition~\ref{prop11}, for
compactness both Dyck paths are rotated
clockwise by $\frac{\pi}{4}$, key points used
in the proof are marked with $\star$}\label{fig7} 
\end{figure}

Our next observation is related to the combinatorics
of Pascal triangle.

\begin{proposition}\label{prop17}
Let $n\in\mathbb{N}$ and $i,j\in\{1,2,\dots,n\}$. Then
we have:
\begin{enumerate}[$($a$)$] 
\item\label{prop17.1}
$\displaystyle
\mathbf{c}^{(n)}_{i,j}+
\sum_{s=1}^n\mathbf{c}^{(n)}_{s,i+j+1}=
\sum_{s=i}^n\mathbf{c}^{(n)}_{s,j+1}$;
\item\label{prop17.2}
$\displaystyle
\mathbf{c}^{(n)}_{i,j}+\mathbf{c}^{(n)}_{1,i+j}=
\mathbf{c}^{(n)}_{i+1,j}+\mathbf{c}^{(n)}_{i,j+1}$.
\end{enumerate}
\end{proposition}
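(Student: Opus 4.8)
The plan is to obtain a closed formula for the entries of $\mathbf{C}_n$ and then observe that both identities collapse to Pascal's rule. First I would prove that for $1\leq i,j\leq n$ with $(i,j)\neq(n,n)$ one has
\[
\mathbf{c}^{(n)}_{i,j}=\binom{2n-i-j-2}{n-i-1}-\binom{2n-i-j-2}{n-1},
\]
while $\mathbf{c}^{(n)}_{n,n}=1$. The quickest route is induction on $n$ via the recursion \eqref{eq4}, which yields $\mathbf{c}^{(n)}_{i,j}=\sum_{s=i-1}^{n-1}\mathbf{c}^{(n-1)}_{s,j}$ for $i\leq n-1$; the asserted formula then drops out by applying the hockey-stick identity $\sum_{t=0}^{T}\binom{r+t}{t}=\binom{r+T+1}{T}$ to each of the two binomials supplied by the inductive hypothesis. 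Equivalently one can read it off from Theorem~\ref{thm7}: a path in $\mathcal{D}_n(i,j)$ with at least two peaks factors uniquely as $r^{i}f\,u\,rf^{j}$, where $u$ goes from height $i-1$ to height $j-1$, stays $\geq 0$, and has $n-i-1$ rises and $n-j-1$ falls, and the number of such $u$ is exactly the above difference of binomials by the reflection principle (for $(i,j)=(n,n)$ the only path is $r^{n}f^{n}$).

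With the formula in hand, I would use the hockey-stick identity once more to evaluate the column sums appearing in part~(a):
\[
\sum_{s=i}^{n}\mathbf{c}^{(n)}_{s,m}=\binom{2n-i-m-1}{n-i-1}-\binom{2n-i-m-1}{n}
\]
for $1\leq m\leq n-1$, and in particular $\sum_{s=1}^{n}\mathbf{c}^{(n)}_{s,m}=\binom{2n-m-2}{n-2}-\binom{2n-m-2}{n}$. Substituting these and the closed formula into (a) and writing $M:=2n-i-j-3$, the term $\binom{M+1}{n-i-1}$ cancels from both sides, leaving
\[
\binom{M+1}{n}-\binom{M+1}{n-1}=\binom{M}{n}-\binom{M}{n-2},
\]
which is immediate from $\binom{M+1}{k}=\binom{M}{k}+\binom{M}{k-1}$. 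Identity (b) reduces similarly, by a couple of applications of Pascal's rule after the same substitution; alternatively (b) follows formally from (a) by subtracting (a) at $(i+1,j)$ from (a) at $(i,j)$ together with the telescoping relation $\sum_{s}\mathbf{c}^{(n)}_{s,m}-\sum_{s}\mathbf{c}^{(n)}_{s,m+1}=\mathbf{c}^{(n)}_{1,m-1}$, which is itself another one-line consequence of the formula.

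The main obstacle I anticipate is not the algebra but the bookkeeping at the boundary of the index range. The closed formula is exceptional at $(i,j)=(n,n)$, and the hockey-stick evaluations above are legitimate only when the relevant binomial ranges do not degenerate and the summation does not run over the entry $\mathbf{c}^{(n)}_{n,n}=1$. For the remaining pairs $(i,j)$ — those for which some index occurring in the two steps above falls outside $\{1,\dots,n\}$, or for which the summation meets $\mathbf{c}^{(n)}_{n,n}$ — I would verify (a) and (b) directly, using that every border entry of $\mathbf{C}_n$ equals $0$ except $\mathbf{c}^{(n)}_{n,n}=1$; this separate treatment of the border is also what makes precise the exact range in which (a) and (b) hold.
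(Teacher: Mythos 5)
Your proposal is correct in substance and takes a genuinely different route from the paper. The paper proves both identities by constructing explicit bijections between (unions of) sets $\mathcal{D}_n(\cdot,\cdot)$ of Dyck paths: for part~(a) a two-piece bijection separating primitive from imprimitive paths, and for part~(b) a reduction to part~(a) via the recursion \eqref{eq5}. You instead establish the closed formula
$\mathbf{c}^{(n)}_{i,j}=\binom{2n-i-j-2}{n-i-1}-\binom{2n-i-j-2}{n-1}$
(which agrees with the paper's Proposition~\ref{prop23} after the substitution $\binom{m}{k}=\binom{m}{m-k}$, and which the paper proves afterwards and independently, so there is no circularity), then evaluate the column sums by hockey-stick, and finally reduce everything to Pascal's rule. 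I checked the key step: the relation you land on,
$\binom{M+1}{n}-\binom{M+1}{n-1}=\binom{M}{n}-\binom{M}{n-2}$ with $M=2n-i-j-3$, is indeed an immediate consequence of Pascal, and your telescoping identity $\sum_s\mathbf{c}^{(n)}_{s,m}-\sum_s\mathbf{c}^{(n)}_{s,m+1}=\mathbf{c}^{(n)}_{1,m-1}$ also checks out after one application of Pascal's rule on each side.

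The trade-off between the two approaches is essentially the one you anticipate. The paper's bijections avoid any closed formula and remain inside the combinatorics of Dyck paths, but the decompositions such as $p=r^ifxrf^j$ tacitly assume the first and last peaks are distinct, so they silently break down at the extreme corner (e.g.\ $(i,j)=(n,n)$, where $\mathcal{D}_n(n,n)=\{\mathbf{p}\}$ is nonempty while the alleged image $\bigcup_{s=n}^{n}\mathcal{D}_n(s,n+1)$ is empty); in fact, a direct check with $\mathbf{C}_4$ shows that identity (a) fails as literally stated at $(i,j)=(4,4)$ and $(4,3)$. Your algebraic route makes this boundary degeneracy visible rather than hiding it — the closed formula has an explicit exception at $(n,n)$, and the hockey-stick evaluations force you to track the ranges of the indices. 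So your final caveat, that a separate verification of the border pairs is what \emph{makes precise the exact range} of validity, is not merely bookkeeping: it pins down a restriction (roughly $i,j\leq n-1$, which is all the paper later uses, since it only applies the proposition to the principal minor $\mathbf{C}'_n$) that the paper's statement and bijective proof gloss over. One small slip in the write-up: the recursion from \eqref{eq4} reads $\mathbf{c}^{(n)}_{i,j}=\sum_{s=i-1}^{n-1}\mathbf{c}^{(n-1)}_{s,j}$ and requires $j\leq n-1$ as well as $i\leq n-1$; you should state both restrictions when running the induction.
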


\begin{proof}
Claim \eqref{prop17.2} reduces to claim \eqref{prop17.1}
using the recursion from the construction of 
$\mathbf{C}_n$ given by \eqref{eq5}.
 
By Theorem~\ref{thm7}, to prove claim \eqref{prop17.1}
we have to construct a bijection between the set
\begin{displaymath}
A:=\mathcal{D}_n(i,j)\cup
\bigcup_{s=1}^n \mathcal{D}_n(s,i+j+1)
\end{displaymath}
and the set 
\begin{displaymath}
B:=\bigcup_{s=i}^n \mathcal{D}_n(s,j+1).
\end{displaymath}
We denote by $A'$ the set $\bigcup_{s=1}^n \mathcal{D}_n(s,i+j+1)$. Let $X$ denote the set
of primitive Dyck paths in $B$ and set $Y:=B\setminus X$.

First we construct a bijection between $\mathcal{D}_n(i,j)$
and $X$.
Every $p\in \mathcal{D}_n(i,j)$ can be written in
the form $r^ifxrf^j$. Transforming this path into
$r^ixrf^{j+1}$ produces a path from $X$. This defines
a bijection with the inverse defined as follows:
Every $q\in X$ can be written in
the form $r^iyrf^{j+1}$. As $q$ is primitive,
transforming this path into $r^ifyrf^{j}$ produces
a path $\mathcal{D}_n(i,j)$ which gives the desired inverse
(see Figure~\ref{fig6}).

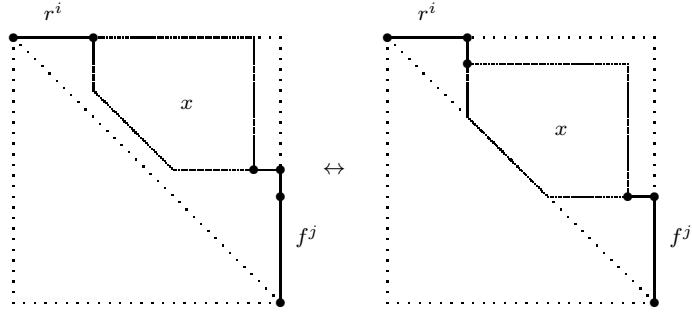
\begin{figure}
\begin{picture}(270.00,130.00)
\dottedline{4}(10.00,10.00)(110.00,10.00)
\dottedline{4}(110.00,110.00)(110.00,10.00)
\dottedline{4}(110.00,110.00)(10.00,110.00)
\dottedline{4}(10.00,10.00)(10.00,110.00)
\dottedline{4}(10.00,110.00)(110.00,10.00)
\dottedline{4}(150.00,10.00)(150.00,110.00)
\dottedline{4}(250.00,110.00)(150.00,110.00)
\dottedline{4}(250.00,110.00)(250.00,10.00)
\dottedline{4}(150.00,10.00)(250.00,10.00)
\dottedline{4}(150.00,110.00)(250.00,10.00)
%%%%%%%%%%%%%%%%%%%%%%%%%%%%%%%%%%
\put(25.00,120.00){\makebox(0,0)[cc]{\tiny $r^i$}}
\put(165.00,120.00){\makebox(0,0)[cc]{\tiny $r^i$}}
\put(120.00,35.00){\makebox(0,0)[cc]{\tiny $f^j$}}
\put(260.00,35.00){\makebox(0,0)[cc]{\tiny $f^j$}}
\put(75.00,85.00){\makebox(0,0)[cc]{\tiny $x$}}
\put(215.00,75.00){\makebox(0,0)[cc]{\tiny $x$}}
\put(130.00,60.00){\makebox(0,0)[cc]{\tiny $\leftrightarrow$}}
\put(10.00,110.00){\makebox(0,0)[cc]{\tiny $\bullet$}}
\put(40.00,110.00){\makebox(0,0)[cc]{\tiny $\bullet$}}
\put(180.00,110.00){\makebox(0,0)[cc]{\tiny $\bullet$}}
\put(180.00,100.00){\makebox(0,0)[cc]{\tiny $\bullet$}}
\put(100.00,60.00){\makebox(0,0)[cc]{\tiny $\bullet$}}
\put(110.00,60.00){\makebox(0,0)[cc]{\tiny $\bullet$}}
\put(110.00,50.00){\makebox(0,0)[cc]{\tiny $\bullet$}}
\put(240.00,50.00){\makebox(0,0)[cc]{\tiny $\bullet$}}
\put(250.00,50.00){\makebox(0,0)[cc]{\tiny $\bullet$}}
\put(250.00,10.00){\makebox(0,0)[cc]{\tiny $\bullet$}}
\put(110.00,10.00){\makebox(0,0)[cc]{\tiny $\bullet$}}
\put(150.00,110.00){\makebox(0,0)[cc]{\tiny $\bullet$}}
%%%%%%%%%%%%%%%%%%%%%%%%%%%%%%%%%%
\drawline(10.00,110.00)(40.00,110.00)
\drawline(100.00,60.00)(110.00,60.00)
\drawline(110.00,10.00)(110.00,60.00)
\drawline(150.00,110.00)(180.00,110.00)
\drawline(180.00,100.00)(180.00,110.00)
\drawline(240.00,50.00)(250.00,50.00)
\drawline(250.00,10.00)(250.00,50.00)
%%%%%%%%%%%%%%%%%%%%%%%%%%%%%%%
\dottedline{1}(40.00,110.00)(100.00,110.00)
\dottedline{1}(100.00,60.00)(100.00,110.00)
\dottedline{1}(100.00,60.00)(70.00,60.00)
\dottedline{1}(40.00,90.00)(70.00,60.00)
\dottedline{1}(40.00,90.00)(40.00,110.00)
\dottedline{1}(180.00,100.00)(240.00,100.00)
\dottedline{1}(240.00,50.00)(240.00,100.00)
\dottedline{1}(240.00,50.00)(210.00,50.00)
\dottedline{1}(180.00,80.00)(210.00,50.00)
\dottedline{1}(180.00,80.00)(180.00,100.00)
\end{picture}
\caption{First bijection
in the proof of Proposition~\ref{prop17}, for
compactness both Dyck paths are rotated
clockwise by $\frac{\pi}{4}$}\label{fig6} 
\end{figure}

It is left to construct a bijection between $A'$
and $Y$. Every $p\in Y$ can be written in the form
$r^ix\mathbf{f}\mathbf{r}yrf^{j+1}$, where the
bold $\mathbf{f}\mathbf{r}$ marks the first return of
the imprimitive path $p$ to the $x$-axis. It is easy 
to show that transforming 
\begin{displaymath}
r^ixfryrf^{j+1}\mapsto ryrx^*f^*(r^i)^*f^{j+1} 
\end{displaymath}
defines a map from $Y$ to $A'$. The inverse of this map
is defined as follows: every path $q\in A'$ can be
written in the form $ru\mathbf{r}\mathbf{r}wrf^{i+j+1}$,
where the bold $\mathbf{r}\mathbf{r}$ marks the last
crossing of the height $j+1$. It is easy 
to show that transforming 
\begin{displaymath}
rurrwrf^{i+j+1}\mapsto (f^i)^*w^*r^*rurf^{j+1}
\end{displaymath}
defines the desired inverse from $A'$ to $Y$
(see Figure~\ref{fig5}),  completing the proof.
\end{proof}

\begin{figure}
\begin{picture}(270.00,130.00)
\dottedline{4}(10.00,10.00)(110.00,10.00)
\dottedline{4}(110.00,110.00)(110.00,10.00)
\dottedline{4}(110.00,110.00)(10.00,110.00)
\dottedline{4}(10.00,10.00)(10.00,110.00)
\dottedline{4}(10.00,110.00)(110.00,10.00)
\dottedline{4}(150.00,10.00)(150.00,110.00)
\dottedline{4}(250.00,110.00)(150.00,110.00)
\dottedline{4}(250.00,110.00)(250.00,10.00)
\dottedline{4}(150.00,10.00)(250.00,10.00)
\dottedline{4}(150.00,110.00)(250.00,10.00)
%%%%%%%%%%%%%%%%%%%%%%%%%%%%%%%%%%
\put(20.00,120.00){\makebox(0,0)[cc]{\tiny $r^i$}}
\put(50.00,70.00){\makebox(0,0)[cc]{$\star$}}
\put(210.00,90.00){\makebox(0,0)[cc]{$\star$}}
\put(40.00,100.00){\makebox(0,0)[cc]{\tiny $x$}}
\put(232.00,80.00){\makebox(0,0)[cc]{\tiny $x^*$}}
\put(180.00,100.00){\makebox(0,0)[cc]{\tiny $y$}}
\put(80.00,60.00){\makebox(0,0)[cc]{\tiny $y$}}
\put(120.00,30.00){\makebox(0,0)[cc]{\tiny $f^{j+1}$}}
\put(260.00,30.00){\makebox(0,0)[cc]{\tiny $f^{j+1}$}}
\put(260.00,60.00){\makebox(0,0)[cc]{\tiny $(r^i)^*$}}
\put(130.00,60.00){\makebox(0,0)[cc]{\tiny $\leftrightarrow$}}
\put(10.00,110.00){\makebox(0,0)[cc]{\tiny $\bullet$}}
\put(30.00,110.00){\makebox(0,0)[cc]{\tiny $\bullet$}}
\put(50.00,90.00){\makebox(0,0)[cc]{\tiny $\bullet$}}
\put(50.00,80.00){\makebox(0,0)[cc]{\tiny $\bullet$}}
\put(60.00,70.00){\makebox(0,0)[cc]{\tiny $\bullet$}}
\put(100.00,50.00){\makebox(0,0)[cc]{\tiny $\bullet$}}
\put(110.00,50.00){\makebox(0,0)[cc]{\tiny $\bullet$}}
\put(110.00,10.00){\makebox(0,0)[cc]{\tiny $\bullet$}}
\put(150.00,110.00){\makebox(0,0)[cc]{\tiny $\bullet$}}
\put(160.00,110.00){\makebox(0,0)[cc]{\tiny $\bullet$}}
\put(200.00,90.00){\makebox(0,0)[cc]{\tiny $\bullet$}}
\put(220.00,90.00){\makebox(0,0)[cc]{\tiny $\bullet$}}
\put(240.00,70.00){\makebox(0,0)[cc]{\tiny $\bullet$}}
\put(250.00,70.00){\makebox(0,0)[cc]{\tiny $\bullet$}}
\put(250.00,50.00){\makebox(0,0)[cc]{\tiny $\bullet$}}
\put(250.00,10.00){\makebox(0,0)[cc]{\tiny $\bullet$}}
%%%%%%%%%%%%%%%%%%%%%%%%%%%%%%%%%%
\drawline(10.00,110.00)(30.00,110.00)
\drawline(50.00,90.00)(50.00,70.00)
\drawline(60.00,70.00)(50.00,70.00)
\drawline(100.00,50.00)(110.00,50.00)
\drawline(110.00,10.00)(110.00,50.00)
\drawline(150.00,110.00)(160.00,110.00)
\drawline(200.00,90.00)(220.00,90.00)
\drawline(240.00,70.00)(250.00,70.00)
\drawline(250.00,10.00)(250.00,70.00)
%%%%%%%%%%%%%%%%%%%%%%%%%%%%%%%
\dottedline{1}(30.00,110.00)(50.00,110.00)
\dottedline{1}(50.00,80.00)(50.00,110.00)
\dottedline{1}(50.00,80.00)(30.00,100.00)
\dottedline{1}(30.00,110.00)(30.00,100.00)
\dottedline{1}(60.00,70.00)(100.00,70.00)
\dottedline{1}(100.00,50.00)(100.00,70.00)
\dottedline{1}(100.00,50.00)(70.00,50.00)
\dottedline{1}(60.00,60.00)(70.00,50.00)
\dottedline{1}(60.00,60.00)(60.00,70.00)
\dottedline{1}(210.00,90.00)(240.00,90.00)
\dottedline{1}(240.00,70.00)(240.00,90.00)
\dottedline{1}(240.00,70.00)(230.00,70.00)
\dottedline{1}(210.00,90.00)(230.00,70.00)
\dottedline{1}(160.00,110.00)(200.00,110.00)
\dottedline{1}(200.00,90.00)(200.00,110.00)
\dottedline{1}(200.00,90.00)(170.00,90.00)
\dottedline{1}(160.00,100.00)(170.00,90.00)
\dottedline{1}(160.00,100.00)(160.00,110.00)
\dottedline{2}(230.00,70.00)(250.00,50.00)
\end{picture}
\caption{Second bijection
in the proof of Proposition~\ref{prop17}, for
compactness both Dyck paths are rotated
clockwise by $\frac{\pi}{4}$, key points used
in the proof are marked with $\star$}\label{fig5} 
\end{figure}

Denote by $\mathbf{C}'_n$ the principal minor of
$\mathbf{C}_n$ corresponding to the first $n-1$
rows and columns. Proposition~\ref{prop17} implies
that the part of $\mathbf{C}'_n$ lying below and on
the opposite diagonal coincides with the Pascal
triangle (appropriately rotated). The part above
the opposite diagonal follows almost the same
recursion (every element is the sum of the 
right and below neighbors) but with a 
kind of a ``correction term''. 

Here is an explicit formula for 
$\mathbf{c}^{(n)}_{i,j}$:

\begin{proposition}\label{prop23}
For $n\in\mathbb{N}$ and $i,j\in\{1,2,\dots,n-1\}$ we have:
\begin{displaymath}
\mathbf{c}^{(n)}_{i,j}= 
\binom{(n-1-i)+(n-1-j)}{n-1-i}
-\binom{(n-1-i)+(n-1-j)}{n-i-j-1}.
\end{displaymath}
\end{proposition}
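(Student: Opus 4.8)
The plan is to prove the formula combinatorially: use Theorem~\ref{thm7} to turn it into a counting statement about Dyck paths, decompose such paths, and then apply the reflection principle.

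First, by Theorem~\ref{thm7} it suffices to show that $|\mathcal{D}_n(i,j)|$ equals the asserted expression. The structural observation I would use is that a Dyck path $p$ has first peak of height $i$ if and only if $p$ begins with $r^i f$, and it has last peak of height $j$ if and only if $p$ ends with $rf^j$ (this follows from the fact, recalled in Section~\ref{s3.2}, that every Dyck path starts with a rise and ends with a fall, so the first peak is the top of the opening run of rises and the last peak is the top of the closing run of falls). Hence every $p\in\mathcal{D}_n(i,j)$ factors uniquely as $p=r^i f\, w\, r f^j$, where $w$ is a $\pm 1$ lattice path of length $2n-i-j-2$ running from height $i-1$ to height $j-1$ and never going below the $x$-axis; conversely, for $i,j\in\{1,\dots,n-1\}$ every such $w$ produces an element of $\mathcal{D}_n(i,j)$, since then $i-1,j-1\ge 0$ and $2n-i-j-2\ge 0$, so the decomposition always makes sense (even in the boundary case where $w$ is the empty path, e.g.\ $i=j=n-1$). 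This reduces the problem to counting these non-negative lattice paths.

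Next I would count the lattice paths $w$ by the reflection principle. The number of unrestricted $\pm 1$ paths from $(0,i-1)$ to $(2n-i-j-2,\,j-1)$ equals $\binom{2n-i-j-2}{n-1-i}$, namely the number of ways to place the $n-1-i$ up-steps. Those that touch height $-1$ are, by reflecting the initial segment up to the first such visit across the line $y=-1$, in bijection with all $\pm 1$ paths from $(0,-i-1)$ to $(2n-i-j-2,\,j-1)$, of which there are $\binom{2n-i-j-2}{n-1}$. Subtracting and simplifying via $2n-i-j-2=(n-1-i)+(n-1-j)$ and $\binom{2n-i-j-2}{n-1}=\binom{2n-i-j-2}{\,(2n-i-j-2)-(n-1)\,}=\binom{2n-i-j-2}{n-i-j-1}$ yields exactly the stated formula, with the usual convention that a binomial coefficient with out-of-range lower index is zero (which automatically handles the cases $i+j>n-1$, where the path is forced to stay above the axis and the correction term vanishes).

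The argument is routine; the only points that need care are checking that $p\mapsto w$ is genuinely a bijection (in particular the peak-height characterizations above and the degenerate case of empty $w$) and the index bookkeeping in the reflection step. An alternative, more computational route would be induction on $n$ using the recursion $\mathbf{c}^{(n+1)}_{i,j}=\sum_{s=i-1}^{n}\mathbf{c}^{(n)}_{s,j}$ that comes from \eqref{eq4}, \eqref{eq5} and Theorem~\ref{thm7}; this works but amounts to verifying a hockey-stick–type binomial identity and is less transparent than the reflection argument.
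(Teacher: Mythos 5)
Your proof is correct and follows essentially the same route as the paper: interpret $\mathbf{c}^{(n)}_{i,j}$ via Theorem~\ref{thm7} as $|\mathcal{D}_n(i,j)|$, strip off the forced initial run $r^if$ and terminal run $rf^j$ to reduce to counting nonnegative lattice paths between the appropriate endpoints, and then apply Andr\'e's reflection principle. The only cosmetic differences are that you translate the middle segment to start at $(0,i-1)$ rather than $(i+1,i-1)$ and reflect the \emph{initial} segment across $y=-1$, whereas the paper keeps the original coordinates and reflects the \emph{terminal} segment; both choices yield the same count $\binom{2n-i-j-2}{n-1}=\binom{(n-1-i)+(n-1-j)}{n-i-j-1}$.
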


\begin{proof}
We have to count the number of paths in the plane
from the point $(i+1,i-1)$ to the point $(2n-j-1,j-1)$ along
$(1,1)$ and $(1,-1)$ which do not go below the
$x$-axis. This is a slight generalization of the
classical ballot problem (see \cite{An}), which 
can be solved using Andr{\'e}'s reflection principle
(see e.g. \cite[Section~1.6]{Gr} for application
of this principle  to Catalan combinatorics). 
The number of all paths from $(i+1,i-1)$ to 
$(2n-j-1,j-1)$ along $(1,1)$ and $(1,-1)$ equals
$\binom{(n-1-i)+(n-1-j)}{n-1-i}$ (the first summand
of our formula), since we have to do 
$(n-1-i)+(n-1-j)$ steps, and arbitrary $n-1-i$ of them
can be chosen to be rises.

Let $p$ be a path from $(i+1,i-1)$ to 
$(2n-j-1,j-1)$  which goes below the $x$-axis.
Then $p$ has the form $xffy$ where $ff$ indicates the
first crossing of the $x$ axis.  Denote by $y'$
the path obtained from $y$ by swapping all rises and
falls. Then $xffy'$ is a path from $(i+1,i-1)$ to
$(2n-j-1,-j-1)$. Conversely, every path from 
$(i+1,i-1)$ to $(2n-j-1,-j-1)$ crosses the $x$-axis.
Then we can write this path in the form
$uffw$ where $ff$ indicates the
first crossing of the $y$ axis. Then the path
$uffw'$, where $w'$ is obtained from $w$ by 
swapping all rises and falls, is a path from $(i+1,i-1)$ 
to  $(2n-j-1,j-1)$ which goes below the $x$-axes.

Now the claim follows from the observation that 
the number of paths from $(i+1,i-1)$ to $(2n-j-1,-j-1)$
equals $\binom{(n-1-i)+(n-1-j)}{n-i-j-1}$
(the second summand in our formula).
\end{proof}

\subsection{Proof of Theorem~\ref{tmain}}\label{s3.3}

For $n>1$ there is an obvious bijection between strong
ideals of $\hat{\mathfrak{sl}}_n$ and $\hat{\mathfrak{gl}}_n$.
Therefore it is convenient to use the convention that
$\hat{\mathfrak{sl}}_1$ is a codimension one subalgebra 
of $\hat{\mathfrak{gl}}_1$.

We denote by $\mathfrak{B}_n$ the (finite) set of basic 
ideals for the affine Lie algebra $\hat{\mathfrak{sl}}_n$
and set $\mathbf{b}_n:=|\mathfrak{B}_n|$. The aim
of this subsection is to show that $\mathbf{b}_n=
\mathbf{C}_n\cdot\omega\, \mathbf{C}_n$ as stated in
Theorem~\ref{tmain}.

With every $\mathfrak{i}\in\mathfrak{B}_n$ we associate
a pair $\Phi(\mathfrak{i}):=(p,q)$ of 
Dyck paths of semilength $n$
in the following way:
View an $n\times n$ matrix as a square in the coordinate
plane with vertexes $(0,0)$, $(n,0)$, $(0,-n)$
and $(n,-n)$ in the natural way (the entries of the
matrix are $1\times 1$ boxes of this square). 
Our convention is
that the simple root $\alpha_i$, $i=1,\dots,n-1$, of
$\Delta_+$ corresponds to the box with the north-west
coordinate $(i,1-i)$ and the opposite root $-\alpha_i$
corresponds to the box with the north-west 
coordinate $(i-1,-i)$. Fill in all boxes corresponding to
the roots in $\mathbf{s}_+(\mathfrak{i}):=\Delta_+\cap \mathrm{supp}(\mathfrak{i})$.
They all belong to the part of our matrix above the 
diagonal. Since
$\mathfrak{i}$ is an ideal, if some box is filled, then
both its right and upper neighbors are filled as well
(provided that they belong to our square). Consider the
line $l$ starting at $(0,0)$ and ending at $(n,-n)$
which goes along the vectors $(1,0)$ and $(0,-1)$
separating the filled boxed from the ones which are
not filled. Rotate this line counterclockwise around
the origin by
$\frac{\pi}{4}$ and scale it appropriately to obtain
the Dyck path $p$ (of semilength $n$). 
An example is shown in Figure~\ref{fig2}.

The construction of $q$ is similar. Fill in all 
boxes corresponding to the roots $\alpha\in\Delta_-$ 
such that  $\alpha+\delta\in \mathrm{supp}(\mathfrak{i})$
(we denote the set of all such $\alpha$ by $\mathbf{s}_-(\mathfrak{i})$).
They all belong to the part of our matrix below the 
diagonal. Since
$\mathfrak{i}$ is an ideal, if some box is filled, then
both its right and upper neighbors are filled as well
(provided that they belong to the part of our square
below the diagonal). Consider the
line $l$ starting at $(0,0)$ and ending at $(n,-n)$
which goes along the vectors $(1,0)$ and $(0,-1)$
separating the filled boxed below the diagonal from the ones
which are not filled. Rotate this line counterclockwise by
$\frac{\pi}{4}$, reflect it in the $x$-axis and 
scale it appropriately to obtain the Dyck path $q$
(or semilength $n$).
An example is shown in Figure~\ref{fig3}.

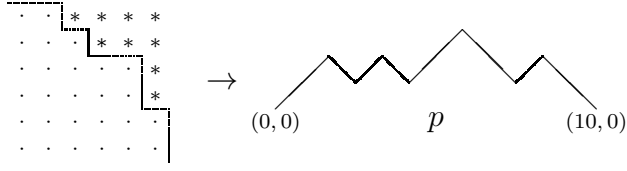
\begin{figure}
\begin{picture}(240.00,80.00)
\put(15.00,15.00){\makebox(0,0)[cc]{\tiny $\cdot$}}
\put(25.00,15.00){\makebox(0,0)[cc]{\tiny $\cdot$}}
\put(35.00,15.00){\makebox(0,0)[cc]{\tiny $\cdot$}}
\put(45.00,15.00){\makebox(0,0)[cc]{\tiny $\cdot$}}
\put(55.00,15.00){\makebox(0,0)[cc]{\tiny $\cdot$}}
\put(65.00,15.00){\makebox(0,0)[cc]{\tiny $\cdot$}}
\put(15.00,25.00){\makebox(0,0)[cc]{\tiny $\cdot$}}
\put(25.00,25.00){\makebox(0,0)[cc]{\tiny $\cdot$}}
\put(35.00,25.00){\makebox(0,0)[cc]{\tiny $\cdot$}}
\put(45.00,25.00){\makebox(0,0)[cc]{\tiny $\cdot$}}
\put(55.00,25.00){\makebox(0,0)[cc]{\tiny $\cdot$}}
\put(65.00,25.00){\makebox(0,0)[cc]{\tiny $\cdot$}}
\put(15.00,35.00){\makebox(0,0)[cc]{\tiny $\cdot$}}
\put(25.00,35.00){\makebox(0,0)[cc]{\tiny $\cdot$}}
\put(35.00,35.00){\makebox(0,0)[cc]{\tiny $\cdot$}}
\put(45.00,35.00){\makebox(0,0)[cc]{\tiny $\cdot$}}
\put(55.00,35.00){\makebox(0,0)[cc]{\tiny $\cdot$}}
\put(55.00,45.00){\makebox(0,0)[cc]{\tiny $\cdot$}}
\put(15.00,45.00){\makebox(0,0)[cc]{\tiny $\cdot$}}
\put(25.00,45.00){\makebox(0,0)[cc]{\tiny $\cdot$}}
\put(35.00,45.00){\makebox(0,0)[cc]{\tiny $\cdot$}}
\put(45.00,45.00){\makebox(0,0)[cc]{\tiny $\cdot$}}
\put(15.00,55.00){\makebox(0,0)[cc]{\tiny $\cdot$}}
\put(25.00,55.00){\makebox(0,0)[cc]{\tiny $\cdot$}}
\put(35.00,55.00){\makebox(0,0)[cc]{\tiny $\cdot$}}
\put(45.00,55.00){\makebox(0,0)[cc]{\tiny $\cdot$}}
\put(15.00,65.00){\makebox(0,0)[cc]{\tiny $\cdot$}}
\put(25.00,65.00){\makebox(0,0)[cc]{\tiny $\cdot$}}
\put(35.00,65.00){\makebox(0,0)[cc]{\tiny $\ast$}}
\put(45.00,65.00){\makebox(0,0)[cc]{\tiny $\ast$}}
\put(55.00,65.00){\makebox(0,0)[cc]{\tiny $\ast$}}
\put(65.00,65.00){\makebox(0,0)[cc]{\tiny $\ast$}}
\put(55.00,55.00){\makebox(0,0)[cc]{\tiny $\ast$}}
\put(65.00,55.00){\makebox(0,0)[cc]{\tiny $\ast$}}
\put(45.00,55.00){\makebox(0,0)[cc]{\tiny $\ast$}}
\put(65.00,45.00){\makebox(0,0)[cc]{\tiny $\ast$}}
\put(65.00,35.00){\makebox(0,0)[cc]{\tiny $\ast$}}
%%%%%%%%%%%%%%%%%%%%%%%%%%%%%%%%%%%%%%%%%%%%%%
\dottedline{1}(10.00,70.00)(30.00,70.00)
\dottedline{1}(30.00,60.00)(30.00,70.00)
\dottedline{1}(30.00,60.00)(40.00,60.00)
\dottedline{1}(40.00,50.00)(40.00,60.00)
\dottedline{1}(40.00,50.00)(60.00,50.00)
\dottedline{1}(60.00,30.00)(60.00,50.00)
\dottedline{1}(60.00,30.00)(70.00,30.00)
\dottedline{1}(70.00,10.00)(70.00,30.00)
%%%%%%%%%%%%%%%%%%%%%%%%%%%%%%%%%%%%%%%%%%%%%%%%%
\put(90.00,40.00){\makebox(0,0)[cc]{$\to$}}
\put(110.00,25.00){\makebox(0,0)[cc]{\tiny $(0,0)$}}
\put(230.00,25.00){\makebox(0,0)[cc]{\tiny $(10,0)$}}
\put(170.00,25.00){\makebox(0,0)[cc]{$p$}}
%%%%%%%%%%%%%%%%%%%%%%%%%%%%%%%%%%%%%%%%%%%%%%
\drawline(110.00,30.00)(130.00,50.00)
\drawline(140.00,40.00)(130.00,50.00)
\drawline(140.00,40.00)(150.00,50.00)
\drawline(160.00,40.00)(150.00,50.00)
\drawline(160.00,40.00)(180.00,60.00)
\drawline(200.00,40.00)(180.00,60.00)
\drawline(200.00,40.00)(210.00,50.00)
\drawline(230.00,30.00)(210.00,50.00)
%%%%%%%%%%%%%%%%%%%%%%%%%%%%%%%%%%%%%%%%%%%%%%%
\end{picture}
\caption{The Dyck path $p$ associated to
$\mathfrak{i}$, the elements of 
$\mathbf{s}_+(\mathfrak{i})$ are given by $\ast$}\label{fig2} 
\end{figure}

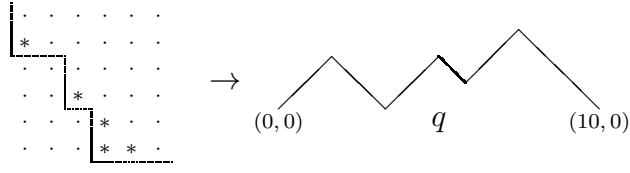
\begin{figure}
\begin{picture}(240.00,80.00)
\put(15.00,15.00){\makebox(0,0)[cc]{\tiny $\cdot$}}
\put(25.00,15.00){\makebox(0,0)[cc]{\tiny $\cdot$}}
\put(35.00,15.00){\makebox(0,0)[cc]{\tiny $\cdot$}}
\put(45.00,15.00){\makebox(0,0)[cc]{\tiny $\ast$}}
\put(55.00,15.00){\makebox(0,0)[cc]{\tiny $\ast$}}
\put(65.00,15.00){\makebox(0,0)[cc]{\tiny $\cdot$}}
\put(15.00,25.00){\makebox(0,0)[cc]{\tiny $\cdot$}}
\put(25.00,25.00){\makebox(0,0)[cc]{\tiny $\cdot$}}
\put(35.00,25.00){\makebox(0,0)[cc]{\tiny $\cdot$}}
\put(45.00,25.00){\makebox(0,0)[cc]{\tiny $\ast$}}
\put(55.00,25.00){\makebox(0,0)[cc]{\tiny $\cdot$}}
\put(65.00,25.00){\makebox(0,0)[cc]{\tiny $\cdot$}}
\put(15.00,35.00){\makebox(0,0)[cc]{\tiny $\cdot$}}
\put(25.00,35.00){\makebox(0,0)[cc]{\tiny $\cdot$}}
\put(35.00,35.00){\makebox(0,0)[cc]{\tiny $\ast$}}
\put(45.00,35.00){\makebox(0,0)[cc]{\tiny $\cdot$}}
\put(65.00,35.00){\makebox(0,0)[cc]{\tiny $\cdot$}}
\put(55.00,35.00){\makebox(0,0)[cc]{\tiny $\cdot$}}
\put(15.00,45.00){\makebox(0,0)[cc]{\tiny $\cdot$}}
\put(25.00,45.00){\makebox(0,0)[cc]{\tiny $\cdot$}}
\put(35.00,45.00){\makebox(0,0)[cc]{\tiny $\cdot$}}
\put(45.00,45.00){\makebox(0,0)[cc]{\tiny $\cdot$}}
\put(65.00,45.00){\makebox(0,0)[cc]{\tiny $\cdot$}}
\put(55.00,45.00){\makebox(0,0)[cc]{\tiny $\cdot$}}
\put(15.00,55.00){\makebox(0,0)[cc]{\tiny $\ast$}}
\put(25.00,55.00){\makebox(0,0)[cc]{\tiny $\cdot$}}
\put(35.00,55.00){\makebox(0,0)[cc]{\tiny $\cdot$}}
\put(45.00,55.00){\makebox(0,0)[cc]{\tiny $\cdot$}}
\put(65.00,55.00){\makebox(0,0)[cc]{\tiny $\cdot$}}
\put(55.00,55.00){\makebox(0,0)[cc]{\tiny $\cdot$}}
\put(15.00,65.00){\makebox(0,0)[cc]{\tiny $\cdot$}}
\put(25.00,65.00){\makebox(0,0)[cc]{\tiny $\cdot$}}
\put(35.00,65.00){\makebox(0,0)[cc]{\tiny $\cdot$}}
\put(45.00,65.00){\makebox(0,0)[cc]{\tiny $\cdot$}}
\put(65.00,65.00){\makebox(0,0)[cc]{\tiny $\cdot$}}
\put(55.00,65.00){\makebox(0,0)[cc]{\tiny $\cdot$}}
%%%%%%%%%%%%%%%%%%%%%%%%%%%%%%%%%%%%%%%%%%%%%%
\dottedline{1}(10.00,70.00)(10.00,50.00)
\dottedline{1}(30.00,50.00)(10.00,50.00)
\dottedline{1}(30.00,50.00)(30.00,30.00)
\dottedline{1}(40.00,30.00)(30.00,30.00)
\dottedline{1}(40.00,30.00)(40.00,10.00)
\dottedline{1}(70.00,10.00)(40.00,10.00)
%%%%%%%%%%%%%%%%%%%%%%%%%%%%%%%%%%%%%%%%%%%%%%%%%
\put(90.00,40.00){\makebox(0,0)[cc]{$\to$}}
\put(110.00,25.00){\makebox(0,0)[cc]{\tiny $(0,0)$}}
\put(230.00,25.00){\makebox(0,0)[cc]{\tiny $(10,0)$}}
\put(170.00,25.00){\makebox(0,0)[cc]{$q$}}
%%%%%%%%%%%%%%%%%%%%%%%%%%%%%%%%%%%%%%%%%%%%%%
\drawline(110.00,30.00)(130.00,50.00)
\drawline(150.00,30.00)(130.00,50.00)
\drawline(150.00,30.00)(170.00,50.00)
\drawline(180.00,40.00)(170.00,50.00)
\drawline(180.00,40.00)(200.00,60.00)
\drawline(230.00,30.00)(200.00,60.00)
%%%%%%%%%%%%%%%%%%%%%%%%%%%%%%%%%%%%%%%%%%%%%%%
\end{picture}
\caption{The Dyck path $q$ associated to
$\mathfrak{j}$, the elements of 
$\mathbf{s}_-(\mathfrak{j})$ are given by $\ast$}\label{fig3} 
\end{figure}

From Theorem~\ref{thm2}\eqref{thm2.4} it follows that
different ideals in $\mathfrak{B}_n$ give different 
pairs of Dyck paths. However, as we will see below,
not every pair of Dyck paths can be obtained in such
way. A pair of Dyck paths which corresponds in this way to
some ideal in $\mathfrak{B}_n$ will be called
{\em admissible}. We will prove Theorem~\ref{tmain}
by counting the number of admissible pairs of Dyck
paths. Denote by $\mathfrak{P}_n$ the set of all
admissible pairs of Dyck paths and for $k,m\in
\{1,2,\dots,n\}$ let $\mathfrak{P}_{n}(k,m)$ denote the
set of all pairs $(p,q)$ from $\mathfrak{P}_n$ for which $k$
and $m$ are the heights of the first and the last peak
of $p$, respectively. Then $\mathfrak{P}_n$ is the
disjoint union of the $\mathfrak{P}_n(k,m)$'s.

\begin{lemma}\label{lem9}
We have $(p,q)\in \mathfrak{P}_n(k,m)$ if and only if
the first peak of $q$ has height at least 
$n-m$ and the last peak of $q$ has height at least 
$n-k$.
\end{lemma}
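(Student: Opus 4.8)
The plan is to transfer the question to the two sets of boxes $\mathbf{s}_+(\mathfrak{i})$ (strictly above the diagonal, recorded by $p$) and $\mathbf{s}_-(\mathfrak{i})$ (strictly below the diagonal, recorded by $q$), and to isolate exactly which conditions on the pair $(\mathbf{s}_+,\mathbf{s}_-)$ are equivalent to $\mathfrak{i}$ being an ideal of $\hat{\mathfrak{b}}$. First I would check, by a bracket computation of the kind used in the proof of Theorem~\ref{thm1} (and using that in type $A$ the bracket of two root vectors is nonzero whenever the sum of their weights is a root, since the relevant root spaces of $\hat{\mathfrak{g}}$ are one-dimensional), that $\mathfrak{i}$ is an ideal if and only if: (a) $\mathbf{s}_+$ is closed under passing to the right or upper neighbour, i.e.\ $\mathfrak{i}\cap\mathfrak{n}_+$ is an ad-nilpotent ideal of $\mathfrak{b}$ --- this is why $p\in\mathcal{D}_n$; (b) $\mathbf{s}_-$ is closed under passing to the right or upper neighbour inside the strictly lower triangular part --- this is why $q\in\mathcal{D}_n$; and (c) whenever a box $(a,b)$ lies in $\mathbf{s}_+$, the whole part of row $a$ to the left of the diagonal and the whole part of column $b$ below the diagonal lie in $\mathbf{s}_-$. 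Condition (c) comes from bracketing a weight vector $x\otimes 1\in\mathfrak{i}$ of weight $\alpha\in\mathbf{s}_+$ with $y\otimes t\in\hat{\mathfrak{n}}_+$ of weight $\beta\in\Delta_-$: the result lies in $\mathfrak{i}$ and has weight $\alpha+\beta+\delta$, which forces $\alpha+\beta\in\mathbf{s}_-$ whenever $\alpha+\beta\in\Delta_-$, and in type $A$ the only such sums arise by shortening the row of $\alpha$ or extending its column downwards. Since (a) and (b) merely say $p,q\in\mathcal{D}_n$, admissibility of a pair $(p,q)$ is precisely condition (c), and it remains to translate (c) into the two peak inequalities.

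The second step is bookkeeping on the two halves of $\Phi$. Reading off the construction of $p$ from $\mathbf{s}_+$, the height of the first peak of $p$ equals (leftmost column meeting $\mathbf{s}_+$) $-\,1$ and that of the last peak equals $n-(\text{lowest row meeting }\mathbf{s}_+)$; thus $\mathbf{s}_+$ occupies exactly rows $1,\dots,n-m$ and columns $k+1,\dots,n$. Reading off the construction of $q$ from $\mathbf{s}_-$ --- paying attention to the rotation and to the reflection in the $x$-axis, which together turn descents of the separating line into ascents of $q$ --- the height of the first peak of $q$ equals $1$ plus the number of boxes of $\mathbf{s}_-$ in the first column, and the height of the last peak of $q$ equals $1$ plus the number of boxes of $\mathbf{s}_-$ in the last row. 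Since the boxes of $\mathbf{s}_-$ in a fixed column (resp.\ row) form an initial (resp.\ terminal) segment, this says: the first peak of $q$ has height $\geq h$ iff the box $(h,1)$ lies in $\mathbf{s}_-$, and the last peak of $q$ has height $\geq h$ iff the box $(n,n+1-h)$ lies in $\mathbf{s}_-$.

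The last step shows that (c) collapses to two box memberships. By (a), $\mathbf{s}_+$ contains the boxes $(n-m,n)$ and $(1,k+1)$; applying (c) to these and propagating with the closure (b) --- first down the first column and along the last row, then across the remaining rows and columns --- one checks that (c) is equivalent to $(n-m,1)\in\mathbf{s}_-$ together with $(n,k+1)\in\mathbf{s}_-$ (each understood vacuously when the box is not strictly below the diagonal, i.e.\ when $m\geq n-1$, resp.\ $k\geq n-1$). By the previous paragraph the first membership says exactly that the first peak of $q$ has height at least $n-m$, and the second that the last peak of $q$ has height at least $n-k$. Hence, for a pair $(p,q)$ with $k$ and $m$ the heights of the first and last peak of $p$, the pair is admissible --- i.e.\ lies in $\mathfrak{P}_n(k,m)$ --- exactly when these two inequalities hold, which is the assertion of the lemma.

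The point requiring genuine work is the last step: that the two chosen instances of (c) already imply all the others. This uses essentially that $\mathbf{s}_+$ is an ideal (so the extremal boxes $(n-m,n)$ and $(1,k+1)$ are present) together with closure (b), plus a small case analysis of when $\alpha+\beta$ is a root for $\alpha\in\Delta_+$ and $\beta\in\Delta_-$. The other delicate-but-routine point is fixing the orientation of the $q$-side of $\Phi$, so that the numbers of boxes of $\mathbf{s}_-$ in the first column and in the last row are indeed the first- and last-peak heights of $q$ diminished by one.
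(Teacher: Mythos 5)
Your proposal is correct and reaches the same two peak inequalities; it differs from the paper's proof mainly in how it establishes the necessary and sufficient condition relating $\mathbf{s}_+$ and $\mathbf{s}_-$. The paper's proof invokes Proposition~\ref{prop4} (the commutator order $\preceq$ on $D$ coincides with the root order $\leq$) together with the coideal description from Theorem~\ref{thm2}, then observes that for $\alpha=\alpha_s+\cdots+\alpha_r\in\mathbf{s}_+$ the negative roots $\beta$ with $\beta\geq\alpha-\alpha_{\mathrm{max}}$ are exactly the $-(\alpha_i+\cdots+\alpha_j)$ with $j<s$ or $i>r$, and finally reads the extreme values of $s$ and $r$ off the valleys of $p$. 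You instead re-derive an equivalent trio of closure conditions (a)--(c) directly from the bracket in $\hat{\mathfrak{sl}}_n$; your (c) is precisely the paper's ``$\beta\geq\alpha-\alpha_{\mathrm{max}}$ implies $\beta+\delta\in\mathrm{supp}$'' rewritten in boxes, and your reduction of (c) to the two memberships $(n-m,1)\in\mathbf{s}_-$ and $(n,k+1)\in\mathbf{s}_-$ via the closure (b) of $\mathbf{s}_-$ is the same information the paper extracts from the first and last valleys of $p$. Your route is more self-contained in type~$A$ --- it bypasses Proposition~\ref{prop4}, whose proof is a long case-by-case check --- at the cost of actually carrying out the bracket verification of (a)--(c) and the box-to-Dyck-path dictionary, which you correctly flag as the routine-but-delicate points and which I have checked do work as you describe (including the rotation-plus-reflection convention for $q$, which sends downs to rises and rights to falls). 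The one thing I would ask you to state uniformly is the boundary behaviour: in addition to $(n-m,1)$ and $(n,k+1)$ being allowed to land on or above the diagonal (which you note), the case $\mathbf{s}_+=\varnothing$, i.e.\ $p=\mathbf{p}$ and $k=m=n$, should be included explicitly --- your phrase ``leftmost column meeting $\mathbf{s}_+$ minus one'' has nothing to parse there, but then $n-m=n-k=0$, the peak inequalities are vacuous, and so is (c), so the equivalence still holds. With those caveats spelled out, the argument is a sound alternative to the paper's proof.
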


\begin{proof}
Let $\mathfrak{i}$ be the basic ideal corresponding to
$(p,q)$. Then, by Proposition~\ref{prop4}, for
any $\alpha\in\Delta_+\cap\mathrm{supp}(\mathfrak{i})$
every element in $\gamma\in D$ such that 
$\gamma\geq \alpha$ must belong to 
$\mathrm{supp}(\mathfrak{i})$. This means that for every 
$\beta\in\Delta_-$ with the property $\beta\geq
\alpha-\alpha_{\mathrm{max}}$, for some
$\alpha\in\Delta_+\cap\mathrm{supp}(\mathfrak{i})$,
the root $\beta+\delta$ belongs to 
$\mathrm{supp}(\mathfrak{i})$.

If $\alpha=\alpha_{s}+\alpha_{s+1}+\dots+\alpha_r$
for some $s,r$ such that $1\leq s\leq r\leq n-1$,
then every $\beta\in\Delta_-$ with the property $\beta\geq
\alpha-\alpha_{\mathrm{max}}$ has the form
$-(\alpha_{i}+\alpha_{i+1}+\dots+\alpha_j)$ either
for some $j<s$ or some $i>r$. The maximal value of
$s$ is achieved by the root which corresponds to
the box of our square for which the south-west corner
is the last valley of $p$.  The last peak of $p$
has height $n-s=m$. This implies that
the first peak of $q$ has height at least 
$n-m$, see Figure~\ref{fig4}.

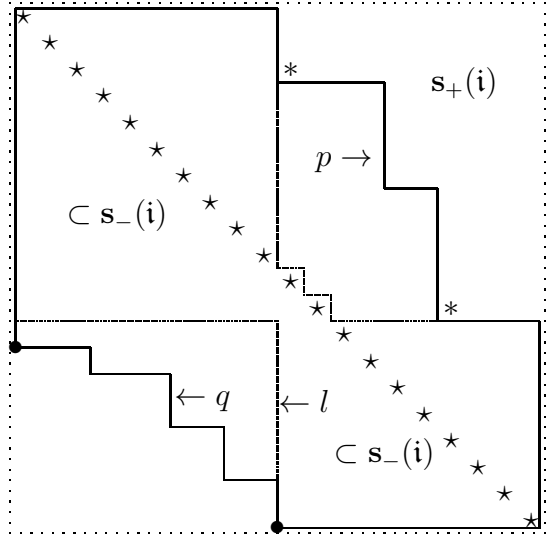
\begin{figure}
\begin{picture}(220.00,220.00)
\dottedline{4}(10.00,10.00)(10.00,210.00)
\dottedline{4}(210.00,210.00)(10.00,210.00)
\dottedline{4}(210.00,210.00)(210.00,10.00)
\dottedline{4}(10.00,10.00)(210.00,10.00)
%%%%%%%%%%%%%%%%%%%%%%%%%%%%%%%%%%
\put(15.00,205.00){\makebox(0,0)[cc]{$\star$}}
\put(25.00,195.00){\makebox(0,0)[cc]{$\star$}}
\put(35.00,185.00){\makebox(0,0)[cc]{$\star$}}
\put(45.00,175.00){\makebox(0,0)[cc]{$\star$}}
\put(55.00,165.00){\makebox(0,0)[cc]{$\star$}}
\put(65.00,155.00){\makebox(0,0)[cc]{$\star$}}
\put(75.00,145.00){\makebox(0,0)[cc]{$\star$}}
\put(85.00,135.00){\makebox(0,0)[cc]{$\star$}}
\put(95.00,125.00){\makebox(0,0)[cc]{$\star$}}
\put(105.00,115.00){\makebox(0,0)[cc]{$\star$}}
\put(115.00,105.00){\makebox(0,0)[cc]{$\star$}}
\put(125.00,95.00){\makebox(0,0)[cc]{$\star$}}
\put(135.00,85.00){\makebox(0,0)[cc]{$\star$}}
\put(145.00,75.00){\makebox(0,0)[cc]{$\star$}}
\put(155.00,65.00){\makebox(0,0)[cc]{$\star$}}
\put(165.00,55.00){\makebox(0,0)[cc]{$\star$}}
\put(175.00,45.00){\makebox(0,0)[cc]{$\star$}}
\put(185.00,35.00){\makebox(0,0)[cc]{$\star$}}
\put(195.00,25.00){\makebox(0,0)[cc]{$\star$}}
\put(205.00,15.00){\makebox(0,0)[cc]{$\star$}}
%%%%%%%%%%%%%%%%%%%%%%%%%%%%%%%%%%
\drawline(12.00,208.00)(110.00,208.00)
\drawline(110.00,180.00)(110.00,208.00)
\drawline(110.00,180.00)(150.00,180.00)
\drawline(150.00,140.00)(150.00,180.00)
\drawline(150.00,140.00)(170.00,140.00)
\drawline(170.00,90.00)(170.00,140.00)
\drawline(170.00,90.00)(208.00,90.00)
\drawline(208.00,12.00)(208.00,90.00)
\drawline(208.00,12.00)(110.00,12.00)
\drawline(110.00,30.00)(110.00,12.00)
\drawline(110.00,30.00)(90.00,30.00)
\drawline(90.00,50.00)(90.00,30.00)
\drawline(90.00,50.00)(70.00,50.00)
\drawline(70.00,70.00)(70.00,50.00)
\drawline(70.00,70.00)(40.00,70.00)
\drawline(40.00,80.00)(40.00,70.00)
\drawline(40.00,80.00)(12.00,80.00)
\drawline(12.00,208.00)(12.00,80.00)
%%%%%%%%%%%%%%%%%%%%%%%%%%%%%%%
\dottedline{1}(110.00,30.00)(110.00,90.00)
\dottedline{1}(12.00,90.00)(110.00,90.00)
\dottedline{1}(110.00,180.00)(110.00,110.00)
\dottedline{1}(120.00,110.00)(110.00,110.00)
\dottedline{1}(120.00,110.00)(120.00,100.00)
\dottedline{1}(130.00,100.00)(120.00,100.00)
\dottedline{1}(130.00,100.00)(130.00,90.00)
\dottedline{1}(170.00,90.00)(130.00,90.00)
%%%%%%%%%%%%%%%%%%%%%%%%%%%%%%%%%%
\put(175.00,95.00){\makebox(0,0)[cc]{$\ast$}}
\put(115.00,185.00){\makebox(0,0)[cc]{$\ast$}}
\put(180.00,180.00){\makebox(0,0)[cc]
{$\mathbf{s}_+(\mathfrak{i})$}}
\put(50.00,130.00){\makebox(0,0)[cc]
{$\subset\mathbf{s}_-(\mathfrak{i})$}}
\put(150.00,40.00){\makebox(0,0)[cc]
{$\subset\mathbf{s}_-(\mathfrak{i})$}}
\put(135.00,150.00){\makebox(0,0)[cc]{$p\to$}}
\put(82.00,60.00){\makebox(0,0)[cc]{$\leftarrow q$}}
\put(120.00,60.00){\makebox(0,0)[cc]{$\leftarrow l$}}
\put(110.00,12.00){\makebox(0,0)[cc]{$\bullet$}}
\put(12.00,80.00){\makebox(0,0)[cc]{$\bullet$}}
\end{picture}
\caption{Argument in the proof of 
Lemma~\ref{lem9}: asterisks show the first and the 
last valleys of $p$ which determine the dashed line $l$, 
the region between $l$ and the diagonal must belong to
$\mathbf{s}_-(\mathfrak{i})$ and hence the path 
$q$ must be below $l$, the first and the last peaks of $q$ are marked by $\bullet$}\label{fig4} 
\end{figure}

The minimal value of
$r$ is achieved by the root which corresponds to
the box of our square for which the south-west corner
is the first valley of $p$. The first peak of $p$
has height $r=k$. This implies that
the last peak of $q$ has height at least 
$n-k$. 
 
Conversely, let $(p,q)$ be a pair of Dyck paths.
Assume that if the first peak of $q$ has 
height at least $n-m$ and the  last peak of $q$ has 
height at least $n-k$. Consider the set $I$ of
all roots in $\Delta_+$ which correspond to the boxes
to the north-east of $p$ in the matrix setup
together will all roots of the form
$\beta+\delta$, where $\beta$ corresponds to the 
boxes to the north-east of $q$ in the matrix setup
(i.e. between $q$ and the diagonal).  Then $I\cup\{\delta\}
\subset D$ is a non-empty subset and the argument 
from above implies that $\alpha\in I$,
$\beta\in D$ and $\beta\geq \alpha$
implies $\beta\in I\cup\{\delta\}$. This means that
$(p,q)$ is admissible.
\end{proof}

\begin{proof}[Proof of Theorem~\ref{tmain}]
From the definitions we have: 
\begin{displaymath}
\mathbf{b}_n=|\mathfrak{P}_n|=
\left|\bigcup_{k,m=1}^n\mathfrak{P}_n(k,m)\right|=
\sum_{k,m=1}^n\left|\mathfrak{P}_n(k,m)\right|.
\end{displaymath}
From Lemma~\ref{lem9} it follows that there
is a bijection between $\mathfrak{P}_n(k,m)$
and the set
\begin{displaymath}
\bigcup_{i=n-m}^n\,\,\bigcup_{j=n-k}^n
\mathcal{D}_n(i,j)\times\mathcal{D}_n(k,m).
\end{displaymath}
The assertion of Theorem~\ref{tmain} follows
now from Theorem~\ref{thm7} and the definitions.
\end{proof}

\begin{remark}\label{rem10}
{\rm 
The integral sequence $\{\mathbf{b}_n:n\geq 1\}$
seems to be new (at least we could not find anything
similar in \cite{OEIS}). The values of this sequence
for small $n$ are:
\begin{displaymath}
1,\,4,\,18,\,82,\,370,\,1648,\,7252,\,31582,\,136338,\,584248\dots
\end{displaymath}
}
\end{remark}

\begin{corollary}\label{cor22}
For $n\in\mathbb{N}$ we have
\begin{displaymath}
\mathbf{b}_n=\sum_{i,j=1}^n\mathbf{c}^{(n)}_{i,j}
\sum_{k=n-i}^n\sum_{m=n-j}^n\mathbf{c}^{(n)}_{k,m}.
\end{displaymath}
\end{corollary}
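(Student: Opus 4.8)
The plan is to obtain Corollary~\ref{cor22} by simply unfolding the matrix identity of Theorem~\ref{tmain} using the explicit definitions of the bilinear form $\cdot$ and the operator $\omega$ on $\mathrm{M}_n(\Bbbk)$ introduced in Subsection~\ref{s3.1}. By Theorem~\ref{tmain} we have $\mathbf{b}_n=\mathbf{C}_n\cdot\omega\,\mathbf{C}_n$, so it remains only to rewrite the right-hand side entrywise. First I would expand the scalar product: since $A\cdot B=\sum_{i,j=1}^n a_{i,j}b_{i,j}$, taking $A=\mathbf{C}_n$ and $B=\omega\,\mathbf{C}_n$ gives $\mathbf{b}_n=\sum_{i,j=1}^n\mathbf{c}^{(n)}_{i,j}\,(\omega\,\mathbf{C}_n)_{i,j}$.

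Next I would substitute the defining formula for the entries of $\omega\,\mathbf{C}_n$. Since $(\omega A)_{i,j}=\sum_{k=n-i}^n\sum_{m=n-j}^n a_{k,m}$ (with the convention $a_{0,m}=a_{k,0}=0$), applying this with $A=\mathbf{C}_n$ yields $(\omega\,\mathbf{C}_n)_{i,j}=\sum_{k=n-i}^n\sum_{m=n-j}^n\mathbf{c}^{(n)}_{k,m}$. Plugging this back into the previous display produces precisely the asserted identity. The index ranges match verbatim — in particular the corollary's outer sums $\sum_{k=n-i}^n$, $\sum_{m=n-j}^n$ are the same ranges that appear in the definition of $\omega$, and the convention $a_{0,m}=a_{k,0}=0$ for the boundary terms $i=n$ or $j=n$ is inherited without change — so no auxiliary verification is needed.

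Since this is a purely definitional rewriting, there is no genuine obstacle; the only care required is bookkeeping of the summation indices. As an alternative route, one can read the formula directly off the proof of Theorem~\ref{tmain}: there $\mathbf{b}_n=\sum_{k,m=1}^n|\mathfrak{P}_n(k,m)|$, and Lemma~\ref{lem9} together with Theorem~\ref{thm7} identify $|\mathfrak{P}_n(k,m)|$ with $\mathbf{c}^{(n)}_{k,m}\sum_{i=n-m}^n\sum_{j=n-k}^n\mathbf{c}^{(n)}_{i,j}$; relabelling the summation variables via $(i,j,k,m)\mapsto(k,m,i,j)$ then gives exactly the right-hand side of Corollary~\ref{cor22}. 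Either way the argument is short, and I would simply cite Theorem~\ref{tmain} and perform the substitution.
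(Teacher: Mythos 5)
Your primary route is exactly what the paper intends: Corollary~\ref{cor22} is stated without proof because it is nothing more than an entry-by-entry unfolding of $\mathbf{C}_n\cdot\omega\,\mathbf{C}_n$ using the definitions of the bilinear form $\cdot$ and of $\omega$ from Subsection~\ref{s3.1}, and your substitution reproduces the indices verbatim. One small caveat about the alternative route you sketch at the end: after relabelling $(i,j,k,m)\mapsto(k,m,i,j)$ the proof of Theorem~\ref{tmain} yields $\sum_{i,j}\mathbf{c}^{(n)}_{i,j}\sum_{k=n-j}^n\sum_{m=n-i}^n\mathbf{c}^{(n)}_{k,m}$, with the roles of $i$ and $j$ in the inner lower bounds swapped relative to the corollary's display; to land exactly on the stated formula you must additionally invoke the symmetry $\mathbf{c}^{(n)}_{i,j}=\mathbf{c}^{(n)}_{j,i}$ from Corollary~\ref{cor8} (the same symmetry is, in fact, implicitly used in the paper's own passage from Lemma~\ref{lem9} to the matrix form $\mathbf{C}_n\cdot\omega\,\mathbf{C}_n$).
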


Combining this with Proposition~\ref{prop23}
gives and explicit formula for $\mathbf{b}_n$.

\subsection{Algebraic properties of 
combinatorial ideals}\label{s3.4}

Let $\mathfrak{j}$ be a combinatorial ideal.
Following \cite{Pa},
a root $\alpha\in\Delta_+$ is said to be a 
{\em generator} of $\mathfrak{j}$ if
$\alpha\in \mathrm{supp}(\mathfrak{j})$ and
\begin{displaymath}
\bigoplus_{\beta\in \mathrm{supp}
(\mathfrak{j})\setminus\{\alpha\}}
\hat{\mathfrak{g}}_{\beta}
\end{displaymath}
is a combinatorial ideal. It is easy to see that the
number of generators is in fact constant on the
equivalence classes of combinatorial ideals.
Hence it is enough to consider basic ideals.

Let $\mathfrak{i}$ be a basic ideal for
$\hat{\mathfrak{sl}}_n$ and $\Phi(\mathfrak{i})=(p,q)$. 
In the classical case
(studied in \cite{Pa}) the number of generators of
an ad-nilpotent ideal in the Borel subalgebra of
$\mathfrak{sl}_n$ equals the number of valleys in the
associated Dyck path. In our case we have:

\begin{proposition}\label{prop24}
Assume that $p\in\mathcal{D}_n(a,b)$ and
$q\in\mathcal{D}_n(c,d)$. Then 
the number of generators of $\mathfrak{i}$ 
equals $1$ if $D\cap\mathrm{supp}(\mathfrak{i})=
\{\delta\}$ and
\begin{displaymath}
\mathbf{v}(p)+\mathbf{p}_1(q)-\delta_{d,n-a}
-\delta_{c,n-b}, 
\end{displaymath}
where $\delta_{i,j}$ is the Kronecker symbol, otherwise.
\end{proposition}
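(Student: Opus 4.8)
The plan is to turn the statement into a count of minimal elements of $\mathrm{supp}(\mathfrak{i})$ and then to read that count off the two Dyck paths $p$ and $q$. First I would note that a root $\alpha$ is a generator of $\mathfrak{i}$ if and only if $\hat{\mathfrak{g}}_{\alpha}\subset\mathfrak{i}$ and $\alpha$ is a minimal element of $\mathrm{supp}(\mathfrak{i})$ with respect to $\leq$: deleting $\hat{\mathfrak{g}}_{\alpha}$ preserves thickness (real root spaces are one‑dimensional, and for $\alpha=\delta$ the whole space $\hat{\mathfrak{g}}_{\delta}$ is deleted) and finite codimension, so the only thing that can fail is that $\mathrm{supp}(\mathfrak{i})\setminus\{\alpha\}$ stops being a coideal, which happens exactly when $\alpha$ is not minimal. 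By Theorem~\ref{thm1}\eqref{thm1.0} a basic ideal contains all of $D'$, so no generator lies in $D'$; hence the set of generators is precisely the antichain $B_{\mathfrak{i}}$ of Theorem~\ref{thm2}\eqref{thm2.4}, and it remains to compute $|B_{\mathfrak{i}}|$. If $D\cap\mathrm{supp}(\mathfrak{i})=\{\delta\}$ then $B_{\mathfrak{i}}=\{\delta\}$, giving the value $1$ (and deleting $\hat{\mathfrak{g}}_{\delta}$ does produce the combinatorial ideal with support $D'$); from now on $\delta$ is not minimal, so $\delta\notin B_{\mathfrak{i}}$.

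Next I would split $B_{\mathfrak{i}}$ along the decomposition $D\cap\mathrm{supp}(\mathfrak{i})=\mathbf{s}_+(\mathfrak{i})\sqcup\{\delta\}\sqcup(\mathbf{s}_-(\mathfrak{i})+\delta)$. Using Proposition~\ref{prop4} together with the explicit shape of the order — concretely, for $\alpha\in\Delta_+$ and $\beta\in\Delta_-$ one has $\alpha\leq\beta+\delta$ precisely when the supports of $\alpha$ and of $-\beta$, viewed as intervals of simple roots, are disjoint, since the $\alpha_0$‑coordinate of $\beta+\delta-\alpha$ equals $1$, while no element of $\{\delta\}\cup(\mathbf{s}_-(\mathfrak{i})+\delta)$ lies below a root of $\Delta_+$ — I would conclude that $B_{\mathfrak{i}}$ is the disjoint union of the set of $\Delta_+$‑minimal elements of $\mathbf{s}_+(\mathfrak{i})$ and the set of those $\beta+\delta$ with $\beta$ minimal in $\mathbf{s}_-(\mathfrak{i})$ and with no $\alpha\in\mathbf{s}_+(\mathfrak{i})$ satisfying $\alpha\leq\beta+\delta$. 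For the first piece (the ``finite part''), the minimal elements of $\mathbf{s}_+(\mathfrak{i})$ correspond box by box to the inner corners of the staircase that defines $p$, and translating through the rotation used in the construction of $\Phi$ these inner corners are exactly the valleys of $p$; so the finite part contributes $\mathbf{v}(p)$ elements (the affine counterpart of the classical fact from \cite{Pa}).

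For the ``affine part'' the same inner‑corner analysis, now applied to $\mathbf{s}_-(\mathfrak{i})$ and to the rotated‑and‑reflected staircase defining $q$, identifies the minimal elements of $\mathbf{s}_-(\mathfrak{i})$ with the peaks of $q$ of height at least $2$, so there are $\mathbf{p}_1(q)$ of them. By the order description above, $\beta+\delta$ lies above some $\alpha\in\mathbf{s}_+(\mathfrak{i})$ exactly when the interval $\mathrm{supp}(-\beta)$ is disjoint from the support of some root of $\mathbf{s}_+(\mathfrak{i})$, equivalently from the support of the leftmost, or of the rightmost, minimal root of $\mathbf{s}_+(\mathfrak{i})$; those two supports are an initial and a final segment of the simple roots, and I would match their lengths with $n-a$ and $n-b$ by relating them to the first and the last peak of $p$. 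It then follows that among the minimal elements of $\mathbf{s}_-(\mathfrak{i})$ only the two extreme inner corners of its staircase — the deepest one in the first column and the deepest one in the last row — can be discarded, and one checks that the first‑column corner is discarded exactly when $c=n-b$ and the last‑row corner exactly when $d=n-a$. This gives $\mathbf{p}_1(q)-\delta_{d,n-a}-\delta_{c,n-b}$ for the affine part, and adding $\mathbf{v}(p)$ yields the formula.

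The main obstacle is this last step: making precise the dictionary between the extreme inner corners of the staircase of $\mathbf{s}_-(\mathfrak{i})$, the ``reach'' of $\mathbf{s}_+(\mathfrak{i})$ towards the left and the right edges of the square, and the first/last peaks of both $p$ and $q$, and then carefully disposing of the degenerate configurations (empty first column or last row of $\mathbf{s}_-(\mathfrak{i})$; $\mathbf{s}_+(\mathfrak{i})$ or $\mathbf{s}_-(\mathfrak{i})$ empty or equal to all of $\Delta_\pm$) — which is precisely the boundary behaviour that the two Kronecker terms are there to record.
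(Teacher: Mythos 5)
Your approach is essentially the one the paper uses. The paper also identifies the generators of a basic ideal $\mathfrak{i}$ with the minimal elements of $\mathrm{supp}(\mathfrak{i})\cap D$, reads off the minimal elements of $\mathbf{s}_+(\mathfrak{i})$ as valleys of $p$ (citing \cite{Pa}), those of $\mathbf{s}_-(\mathfrak{i})$ as peaks of $q$ of height at least $2$, and then discards those peaks that lie in the ``automatic'' corridor between $l$ and the diagonal --- but it does so by pointing to Figure~\ref{fig4} rather than by spelling out the order-theoretic justification. Your explicit unpacking of $\alpha\preceq\beta+\delta$ (for $\alpha\in\Delta_+$, $\beta\in\Delta_-$) as disjointness of the supports of $\alpha$ and $-\beta$, via the $\alpha_0$-coordinate, is the same computation the paper carries out inside the proof of Lemma~\ref{lem9}; making it explicit here is, if anything, cleaner than the paper's appeal to a picture.

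There is, however, a genuine gap, and it sits exactly in the step you defer to at the end as ``carefully disposing of the degenerate configurations.'' Your sentence ``the first-column corner is discarded exactly when $c=n-b$'' tacitly assumes that a first-column minimal element of $\mathbf{s}_-(\mathfrak{i})$ exists, i.e.\ that $c\geq 2$; when $c=1$ there is nothing to discard, yet $\delta_{c,n-b}$ still contributes $1$ whenever $b=n-1$ (and the analogous thing for $d$ and $a$). In those cases the formula over-subtracts. Concretely, for $\hat{\mathfrak{g}}=\hat{\mathfrak{sl}}_3$ take the basic ideal with $\mathrm{supp}(\mathfrak{i})\cap D=\{\alpha_1+\alpha_2,\,\delta\}$; then $p=rrfrff\in\mathcal{D}_3(2,2)$, $q=rfrfrf\in\mathcal{D}_3(1,1)$, $\mathbf{v}(p)=1$, $\mathbf{p}_1(q)=0$, and the displayed formula returns $1+0-\delta_{1,1}-\delta_{1,1}=-1$, although this ideal has exactly one generator, namely $\alpha_1+\alpha_2$. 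The paper's own proof shares the oversight --- it treats ``don't count peaks of $q$ of height one'' and ``don't count the first/last peak of $q$ when it coincides with $l$'' as disjoint exclusions without noting that they can overlap --- so this is not a defect specific to your write-up; but the hand-wave over the boundary cases is precisely where the argument needed to stop and record when a peak of height $1$ is also ``at the boundary.''
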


\begin{proof}
The claim is obvious in the case 
$D\cap\mathrm{supp}(\mathfrak{i})=\{\delta\}$.
In all other cases 
the proof is best understood by looking at
Figure~\ref{fig4}. The generators of 
$\mathbf{s}_+(\mathfrak{i})$ are boxes whose
south-west corner is a valley of $p$
(this is the result from \cite{Pa}).
The region between $l$ and the diagonal
belongs to $\mathfrak{i}$ automatically and thus
contains no generators. The remaining generators
of $\mathbf{s}_-(\mathfrak{i})$ are boxes whose
south-west corner is a peak of $q$. Hence we
have to count all peaks of $q$ with three exceptions:
we should not count the first peak of $q$ if it
coincides with the first peak of $l$ and similarly
for the last peak; we should not count peaks of
height one as the corresponding boxes are on the
diagonal and hence do not correspond to any roots
(or rather correspond to the imaginary root which
is not a generator by the proof of Theorem~\ref{thm2}). 
The claim follows. 
\end{proof}

Set
\begin{displaymath}
\mathfrak{a}=\bigoplus_{\alpha\in D'}
\hat{\mathfrak{g}}_{\alpha}.
\end{displaymath}
A basic ideal $\mathfrak{i}$ will be called 
{\em quasi-abelian} provided that the image of
$\mathfrak{i}$ in 
$\hat{\mathfrak{n}}_+/\mathfrak{a}$
is an abelian ideal. In the finite dimensional
case it is known that the number of abelian
ad-nilpotent ideals in $\mathfrak{b}$ equals
$2^{\dim \mathfrak{h}}$, see \cite{CP1}.

For $p\in\mathcal{D}_n(i,j)$ denote by 
$\overline{p}$ the unique minimal (with respect to
$\leq$) path in $\mathcal{D}_n(n-j,n-i)$
(here we set $\mathcal{D}_n(0,0):=\mathcal{D}_n(1,1)$). Note that
Lemma~\ref{lem9} can then be reformulated as follows:
$(p,q)\in\mathfrak{P}_n(k,m)$ if and only if
$\overline{p}\leq q$.

\begin{proposition}\label{prop27}
The map $\Phi$ induces a bijection between the 
set of quasi-abelian basic ideals in
$\hat{\mathfrak{b}}$ and the set of all pairs $(p,q)$
of Dyck paths satisfying the condition
$\overline{p}\leq q\leq p$.
\end{proposition}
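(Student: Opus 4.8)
The plan is to unravel what ``quasi-abelian'' means in terms of supports and then translate it box-by-box through the combinatorial dictionary $\Phi$. First I would recall that $\mathfrak{a}=\bigoplus_{\alpha\in D'}\hat{\mathfrak{g}}_{\alpha}=\mathfrak{i}(\delta)\cap\hat{\mathfrak n}_+$ is exactly the "automatic" part of every basic ideal, so the image of a basic ideal $\mathfrak i$ in $\hat{\mathfrak n}_+/\mathfrak a$ is the span of the root spaces $\hat{\mathfrak g}_\alpha$ with $\alpha\in\mathrm{supp}(\mathfrak i)\cap D_0=\bigl(\Delta_+\cap\mathrm{supp}(\mathfrak i)\bigr)\,\cup\,\{\beta+\delta:\beta\in\mathbf{s}_-(\mathfrak i)\}$ (discarding $\delta$ itself, which lands in $\mathfrak a$). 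Being \emph{abelian} means: for any two roots $\gamma_1,\gamma_2$ in this set, $[\hat{\mathfrak g}_{\gamma_1},\hat{\mathfrak g}_{\gamma_2}]$ must lie in $\mathfrak a$, i.e. $\gamma_1+\gamma_2$ must not itself lie in $D_0$ (equivalently $\gamma_1+\gamma_2\in D_k$ for some $k\geq1$, or is not a root, or is non-real). I would spell out the three cases: (i) $\gamma_1,\gamma_2\in\Delta_+$ — here $\gamma_1+\gamma_2\in D_0$ iff it is a root of $\Delta_+$, so the condition is exactly that $\mathbf{s}_+(\mathfrak i)=\Delta_+\cap\mathrm{supp}(\mathfrak i)$ is an abelian ad-nilpotent ideal of $\mathfrak b$, which by \cite{Pa} / the classical correspondence is equivalent to $p\leq\mathbf p$ — i.e. no constraint beyond $p$ being a Dyck path with $p\le\mathbf p$... no, wait: abelianness of $\mathbf s_+$ forces a restriction on $p$, which combinatorially says the path $p$ lies \emph{on or below} the "staircase" path, equivalently (as one checks) $q$ can be coupled suitably; (ii) $\gamma_1,\gamma_2$ both of the form $\beta+\delta$ with $\beta\in\Delta_-$ — then $\gamma_1+\gamma_2=(\beta_1+\beta_2)+2\delta\in D_2$ automatically, so these pairs impose \emph{no} condition; (iii) the mixed case $\gamma_1=\alpha\in\Delta_+$, $\gamma_2=\beta+\delta$ with $\beta\in\Delta_-$ — then $\gamma_1+\gamma_2=(\alpha+\beta)+\delta$, which lies in $D_1$ iff $\alpha+\beta\in\Delta$ (of either sign) or $\alpha+\beta=0$, and lies in $D_0$ (the bad case) iff $\alpha+\beta\notin\Delta\cup\{0\}$... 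I need to be careful here: actually $(\alpha+\beta)+\delta\in D_1\subset D'$ precisely when $\alpha+\beta$ is a root or $0$ or when... let me instead argue directly that the bad situation in case (iii) is governed by the geometry in the $n\times n$ square, where $\alpha\in\mathbf s_+(\mathfrak i)$ is a box above the diagonal, $\beta\in\mathbf s_-(\mathfrak i)$ a box below, and $[\hat{\mathfrak g}_\alpha,\hat{\mathfrak g}_{\beta+\delta}]$ fails to be in $\mathfrak a$ exactly when the ``product'' box $\alpha+\beta$ (an $\mathfrak{sl}_n$ matrix entry) falls on or above the diagonal, i.e. when $\alpha+\beta\in\Delta_+\cup\{0\}$.

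The heart of the matter is then to see that case (iii) — which is where the real content lies — is equivalent to the inequality $q\leq p$, and that case (i) plus Lemma~\ref{lem9} give $\overline p\leq q$. I would set up the standard $\mathfrak{gl}_n$ notation: write roots as $e_r-e_s$, so $\Delta_+=\{e_r-e_s:r<s\}$ and a positive root $\alpha=e_r-e_s$ corresponds to the box in row $r$, column $s$; similarly $\beta=e_{s'}-e_{r'}$ with $s'>r'$ sits below the diagonal. Then $\alpha+\beta$ is a root (or $0$) exactly when $\{r,s\}\cap\{r',s'\}\neq\varnothing$ in the right pattern, and one computes: $\alpha+\beta\in\Delta_+\cup\{0\}$ iff the index intervals overlap in the way that puts the resulting box on/above the diagonal. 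Translating the condition ``for every filled box $\alpha$ above the diagonal and every filled box $\beta$ below the diagonal, $\alpha+\beta$ is \emph{not} on or above the diagonal'' into the language of the two staircase lines cutting out $p$ and $q$, I expect this to become precisely: the line for $q$ never rises above the line for $p$, i.e. $q\leq p$ after the rotation/reflection normalization used to define $\Phi$. The key bookkeeping step is matching ``$\beta\in\Delta_-$ with $\beta+\delta\in\mathrm{supp}(\mathfrak i)$'' to the boxes strictly between the diagonal and $q$, while $\alpha\in\mathbf s_+(\mathfrak i)$ matches the boxes strictly above $p$; a filled pair $(\alpha,\beta)$ with $\alpha+\beta$ on or above the diagonal exists iff $p$ and $q$ "cross", which is exactly the failure of $q\leq p$.

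For the other inclusion I would note that $\overline p\leq q$ is \emph{forced} already by admissibility (Lemma~\ref{lem9}, in the reformulation stated just before the Proposition: $(p,q)\in\mathfrak P_n(k,m)$ iff $\overline p\leq q$), so on the set of admissible pairs the quasi-abelian condition adds exactly the single extra inequality $q\leq p$, giving $\overline p\leq q\leq p$. Conversely, any pair with $\overline p\leq q\leq p$ is admissible by Lemma~\ref{lem9} (since $\overline p\leq q$) and defines via $\Phi^{-1}$ a basic ideal whose mixed commutators all land in $\mathfrak a$ by the box-geometry argument above, and whose $\mathbf s_+$-part is abelian because $q\leq p$ together with the admissibility constraint $\overline p\le q$ pins $\mathbf s_+(\mathfrak i)$ inside the region of abelian ideals (one checks the staircase of $p$ stays weakly below the anti-diagonal staircase, equivalently $p$ and the reflected $\overline{p}$-type bound squeeze $\mathbf s_+$ to an abelian shape). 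So $\Phi$ restricts to the claimed bijection.

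\textbf{Main obstacle.} The delicate point will be case (iii): correctly identifying, in $\mathfrak{sl}_n$ (respectively $\mathfrak{gl}_1$ when $n=1$, per the stated convention), when $[\hat{\mathfrak g}_\alpha,\hat{\mathfrak g}_{\beta+\delta}]$ with $\alpha\in\Delta_+$, $\beta\in\Delta_-$ lands back in $D_0\setminus\{\delta\}$ rather than in $D_1\subset\mathfrak a$, and then showing this ``bad overlap'' of a box above the diagonal with a box below the diagonal is equivalent, after the $\pi/4$-rotation conventions of Figures~\ref{fig2} and \ref{fig3}, to the path inequality $q\leq p$. All the other cases either impose nothing (case (ii)) or reduce to the classical \cite{Pa} statement about abelian ad-nilpotent ideals (case (i)); once the box-geometry of case (iii) is pinned down, the equivalence with $q\le p$ and the combination with Lemma~\ref{lem9} are routine.
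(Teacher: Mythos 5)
Your overall strategy — unravel quasi-abelian into conditions on $\mathrm{supp}(\mathfrak i)\cap D_0\setminus\{\delta\}$ and translate them through the box picture — is the same as the paper's, and your final structural conclusion (admissibility forces $\overline p\le q$, quasi-abelianness adds exactly $q\le p$, and abelianness of $\mathbf s_+$ is then automatic) agrees with the paper's proof. However, there is a genuine sign error at the heart of your case (iii), which is exactly the step you yourself flagged as delicate. For $\alpha\in\Delta_+$ and $\beta\in\Delta_-$ the commutator lands in $\hat{\mathfrak g}_{\alpha+\beta+\delta}$; since $\mathfrak a=\bigoplus_{\gamma\in D'}\hat{\mathfrak g}_\gamma$ with $D'=\bigcup_{k\ge1}D_k$ and $D_0=\Delta_+\cup\{\delta\}\cup\{\gamma+\delta:\gamma\in\Delta_-\}$, one has $\alpha+\beta+\delta\in D_1\subset D'$ precisely when $\alpha+\beta\in\Delta_+$, while $\alpha+\beta\in\Delta_-\cup\{0\}$ gives $\alpha+\beta+\delta\in D_0\setminus D'$, i.e.\ the bad case. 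You claim the opposite: that the commutator fails to land in $\mathfrak a$ when ``the product box $\alpha+\beta$ falls on or above the diagonal, i.e.\ when $\alpha+\beta\in\Delta_+\cup\{0\}$.'' That is backwards. Your stated equivalence with $q\le p$ would also fail as written: taking $\alpha=\alpha_{\mathrm{max}}$ and $\beta=-\alpha_1$ gives $\alpha+\beta\in\Delta_+$ without any crossing of $p$ and $q$.

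The paper handles this more cleanly than your plan attempts to: it first establishes that $q\le p$ is equivalent to the much simpler condition ``$-\alpha\notin\mathbf s_-(\mathfrak i)$ for all $\alpha\in\mathbf s_+(\mathfrak i)$'' (which is visibly the statement that the two staircase regions do not overlap after reflecting across the diagonal), and then derives the general mixed-commutator statement from this using the upward closure of $\mathbf s_+$: if $\alpha+\beta\in\Delta_-\cup\{0\}$ with $\alpha\in\mathbf s_+$, $\beta\in\mathbf s_-$, then $-\beta\geq\alpha$ lies in $\mathbf s_+$ while $\beta\in\mathbf s_-$, contradicting the no-$(\alpha,-\alpha)$ condition. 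Your case (i) is also muddled: you start by treating abelianness of $\mathbf s_+$ as a separate constraint on $p$ and only gesture at the end toward it being implied; the paper makes this precise by observing $\overline p\le q\le p\Rightarrow\overline p\le p\Rightarrow (n-i)+(n-j)\le n$ (for $p\in\mathcal D_n(i,j)$), which forces a common simple root $\alpha_m$ in all of $\mathbf s_+(\mathfrak i)$ and hence abelianness. Fix the $\Delta_+/\Delta_-$ sign, and your plan would align with the paper's argument.
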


\begin{proof}
Let $\mathfrak{i}$ be a  basic ideal
and $(p,q)=\Phi(i)$. Then $\overline{p}\leq q$.
The condition $q\leq p$ is equivalent to the following
condition: for every $\alpha\in \mathbf{s}_+(\mathfrak{i})$
we have $-\alpha\not\in \mathbf{s}_-(\mathfrak{i})$,
which is obviously necessary for the ideal $\mathfrak{i}$ 
to be quasi-abelian.

On the other hand, $\overline{p}\leq p$ implies
$(n-i)+(n-j)\leq n$. This yields that 
there exist a simple root $\alpha_m$ in $\Delta_+$ such that 
every root in $\mathbf{s}_+(\mathfrak{i})$ has the form
$\alpha_i+\alpha_{i+1}+\dots+\alpha_j$ for some 
$i\leq m\leq j$. Hence 
$\mathfrak{i}\cap\mathfrak{n}_+$ is an abelian ideal
of $\mathfrak{n}_+$. Furthermore, the condition 
\begin{displaymath}
\alpha\in \mathbf{s}_+(\mathfrak{i})\quad
\text{ implies }\quad -\alpha\not\in \mathbf{s}_-(\mathfrak{i}) 
\end{displaymath}
also means that for any 
$\alpha\in \mathbf{s}_+(\mathfrak{i})$ and
$\beta\in \mathbf{s}_-(\mathfrak{i})$ we have
$[\hat{\mathfrak{g}}_{\alpha},\hat{\mathfrak{g}}_{\beta}]
\in\mathfrak{a}$, which forces the ideal $\mathfrak{i}$
to be quasi-abelian. The claim follows.
\end{proof}

\begin{problem}\label{problem28}
{\em 
Find an explicit formula for the number of 
quasi-abelian ideals in $\hat{\mathfrak{sl}}_n$.
}
\end{problem}

For small values of $n$ the answer to
Problem~\ref{problem28} (computed using a dull examination
of all possible cases on a computer) is as follows:
\begin{displaymath}
1,\,3,\,11,\,44,\,183,\,774,\,3294,\,14034,\,59711,\,253430,\dots 
\end{displaymath}
This sequence again seems to be new and does not appear in \cite{OEIS}.

As usual, for an ideal $\mathfrak{i}$ consider the lower central series
$\mathfrak{i}^{0}:=\mathfrak{i}$ and $\mathfrak{i}^{m+1}:=[\mathfrak{i}^m,\mathfrak{i}]$,
$m\geq 0$. The ideal $\mathfrak{i}$ is nilpotent if $\mathfrak{i}^m=0$ for some $m$
and the minimal such $m$ is called the {\em nilpotency degree} of $\mathfrak{i}$
and denoted $\mathrm{nd}(\mathfrak{i})$.
In the classical case of the Lie algebra $\mathfrak{sl}_n$ there is a beautiful 
bijection between the ad-nilpotent ideals of $\mathfrak{b}$ and Dyck paths, constructed
in \cite{AKOP}, which leads to a combinatorial interpretation of the 
nilpotency degree of an ad-nilpotent ideal in terms of the height of the corresponding
Dyck path. 

To generalize this, for a basic ideal $\mathfrak{i}$ define the {\em quasi-nilpotency degree}
$\mathrm{qnd}(\mathfrak{i})$ of $\mathfrak{i}$ as the minimal non-negative integer $m$ such that 
$\mathfrak{i}^m\subset \mathfrak{a}$. In other words, $\mathrm{qnd}(\mathfrak{i})$
is the nilpotency degree of the image of $\mathfrak{i}$ in $\hat{\mathfrak{n}}_+/\mathfrak{a}$. Then $\mathfrak{i}$ is
quasi-abelian if and only if $\mathrm{qnd}(\mathfrak{i})=1$.
This invariant can be interpreted in combinatorial  terms as follows:

\begin{proposition}\label{prop31}
Let  $\mathfrak{i}$ be a basic ideal and $m=\mathrm{nd}(\mathfrak{i}_+)$. 
\begin{enumerate}[$($a$)$]
\item\label{prop31.1} The number $\mathrm{qnd}(\mathfrak{i})$ equals either $m$ or $m+1$.
\item\label{prop31.2} If $m=0$ then $\mathrm{qnd}(\mathfrak{i})=1$.
\item\label{prop31.3} If $m>0$, then $\mathrm{qnd}(\mathfrak{i})=m$ 
if and only if there does not exist $\beta\in\mathbf{s}_-(\mathfrak{i})$ 
such that  $-\beta\in \mathrm{supp}(\mathfrak{i}_+^{m-1})$.
\end{enumerate}
\end{proposition}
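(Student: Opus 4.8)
\textbf{Proof plan for Proposition~\ref{prop31}.}
The plan is to work entirely inside the quotient Lie algebra $\hat{\mathfrak{n}}_+/\mathfrak{a}$, where the situation becomes finite-dimensional and transparent. First I would set up the basic structural fact: since $\mathfrak{a}=\bigoplus_{\alpha\in D'}\hat{\mathfrak{g}}_\alpha$ is an ideal of $\hat{\mathfrak{n}}_+$ (this follows from Theorem~\ref{thm1}\eqref{thm1.0} and thickness), the image $\bar{\mathfrak{i}}$ of a basic ideal $\mathfrak{i}$ in $\hat{\mathfrak{n}}_+/\mathfrak{a}$ has support $\mathbf{s}_+(\mathfrak{i})\cup\mathbf{s}_-(\mathfrak{i})$, where the first summand sits inside the image of $\mathfrak{n}_+$ and the second inside the image of $t\,\mathfrak{g}\otimes 1\cong\mathfrak{g}$ (the ``$+\delta$'' layer). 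The commutator structure on the generators is: $[\mathbf{s}_+,\mathbf{s}_+]$-type brackets land in $\mathbf{s}_+$; $[\mathbf{s}_+,\mathbf{s}_-]$-type brackets land either in $\mathbf{s}_-$ (if the sum of roots is again in $\Delta_-+\delta$) or in $\mathfrak{a}$ (if the sum reaches $\delta$ or beyond); and $[\mathbf{s}_-,\mathbf{s}_-]$-type brackets land in $\mathfrak{a}$ always, since two roots of the form $\beta+\delta$, $\beta'+\delta$ add up to something $\geq 2\delta - \alpha_{\max} - \alpha_{\max}'$, which one checks always lies in $D'$ once both summands are in $\Delta_-+\delta$. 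Establishing this last claim cleanly (that the $\mathbf{s}_-$-part is ``abelian modulo $\mathfrak{a}$'') is the first real step and it is where one must be a little careful with the root combinatorics.

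Granting that, $\mathrm{nd}(\bar{\mathfrak{i}})=\mathrm{qnd}(\mathfrak{i})$ by definition, and the lower central series of $\bar{\mathfrak{i}}$ can be analysed layer by layer. Write $\bar{\mathfrak{i}}=\bar{\mathfrak{i}}_+\oplus\bar{\mathfrak{i}}_-$ for the two pieces. One shows by induction on $s$ that $\bar{\mathfrak{i}}^{\,s}$ has a $\mathbf{s}_+$-component equal to the image of $\mathfrak{i}_+^{\,s}$ and a $\mathbf{s}_-$-component contained in (and, generically, equal to) the span of those $\beta+\delta$ with $-\beta\in\mathrm{supp}(\mathfrak{i}_+^{\,s-1})$ modified by the root elements of $\bar{\mathfrak{i}}_-$ acted on by $\mathfrak{i}_+^{\,s}$. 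The key point for part \eqref{prop31.1}: the $\mathbf{s}_+$-component of $\bar{\mathfrak{i}}^{\,s}$ vanishes exactly when $s\geq m=\mathrm{nd}(\mathfrak{i}_+)$, and once it vanishes, one more bracket kills the $\mathbf{s}_-$-component since $[\bar{\mathfrak{i}}_-,\bar{\mathfrak{i}}_-]=0$ in the quotient and $[\bar{\mathfrak{i}},\bar{\mathfrak{i}}_-]$ then equals $[\bar{\mathfrak{i}}_+,\bar{\mathfrak{i}}_-]$ with $\bar{\mathfrak{i}}_+$ already involving only $\mathfrak{i}_+^{\,m}=0$ contributions — so $\bar{\mathfrak{i}}^{\,m+1}=0$, giving $\mathrm{qnd}(\mathfrak{i})\leq m+1$; and clearly $\mathrm{qnd}(\mathfrak{i})\geq m$ because the $\mathbf{s}_+$-layer alone already requires $m$ brackets to die. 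Part \eqref{prop31.2} is then immediate: if $m=0$ then $\mathfrak{i}_+=0$, so $\bar{\mathfrak{i}}=\bar{\mathfrak{i}}_-$ is nonzero (as $\mathfrak{i}$ is basic, $D\cap\mathrm{supp}(\mathfrak{i})\neq\varnothing$ forces, via Theorem~\ref{thm1}, either $\delta$ alone — excluded since then $\mathfrak{i}\subset\mathfrak{a}$ and $m$ would not be defined in the interesting sense — or some $\mathbf{s}_-$-root) and abelian modulo $\mathfrak{a}$, hence $\mathrm{qnd}=1$.

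For part \eqref{prop31.3}, with $m>0$, the dichotomy $\mathrm{qnd}\in\{m,m+1\}$ reduces to deciding whether $\bar{\mathfrak{i}}^{\,m}=0$ or not. From the layer analysis, $\bar{\mathfrak{i}}^{\,m}$ has trivial $\mathbf{s}_+$-component (since $\mathfrak{i}_+^{\,m}=0$), so $\bar{\mathfrak{i}}^{\,m}=0$ iff its $\mathbf{s}_-$-component vanishes. That component is spanned by brackets $[\,x,\,y\otimes t\,]$ with $x$ ranging over root elements of $\mathfrak{i}_+^{\,m-1}$ (for roots $\gamma\in\mathrm{supp}(\mathfrak{i}_+^{m-1})$) and $y\otimes t$ over root elements of $\bar{\mathfrak{i}}_-$ (for roots $\beta+\delta$, $\beta\in\mathbf{s}_-(\mathfrak{i})$); such a bracket is a nonzero element of the $\mathbf{s}_-$-layer precisely when $\gamma+\beta\in\Delta_-$, and by the $\mathfrak{sl}_2$-argument used in the proof of Theorem~\ref{thm2}\eqref{thm2.2} (each relevant root is real, root spaces one-dimensional, so the bracket is nonzero whenever the sum is a root), the obstruction is exactly the existence of $\beta\in\mathbf{s}_-(\mathfrak{i})$ with $-\beta\in\mathrm{supp}(\mathfrak{i}_+^{\,m-1})$: indeed if $-\beta=\gamma\in\mathrm{supp}(\mathfrak{i}_+^{m-1})$ then $\gamma+\beta=0$ is not a root, so that particular pair contributes $0$ to $\mathfrak{a}$ but — wait, one must check the direction carefully — the nonvanishing of the $\mathbf{s}_-$-component needs some $\gamma+\beta\in\Delta_-$ with $\gamma\in\mathrm{supp}(\mathfrak{i}_+^{m-1})$, $\beta\in\mathbf{s}_-(\mathfrak{i})$; since $\mathbf{s}_-(\mathfrak{i})$ is a down-set under the appropriate order and $\mathrm{supp}(\mathfrak{i}_+^{m-1})$ an up-set, such a pair fails to exist exactly when every $\beta\in\mathbf{s}_-(\mathfrak{i})$ has $-\beta\in\mathrm{supp}(\mathfrak{i}_+^{m-1})$ — this monotonicity bookkeeping is the main obstacle and needs the explicit type-$A$ root combinatorics (strings of consecutive simple roots) to pin down. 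Once that equivalence is verified, \eqref{prop31.3} follows: $\mathrm{qnd}(\mathfrak{i})=m$ iff $\bar{\mathfrak{i}}^{\,m}=0$ iff no such $\beta$ exists, and otherwise $\mathrm{qnd}(\mathfrak{i})=m+1$ by \eqref{prop31.1}.
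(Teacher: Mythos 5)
Your overall plan — pass to the quotient $\hat{\mathfrak{n}}_+/\mathfrak{a}$, use its grading (degree $0$ = $\mathfrak{n}_+$, degree $1$ = $\mathfrak{b}_-\otimes t$), observe that the degree-$1$ part is abelian, and then track the two graded components of the lower central series — is the same underlying idea as the paper's proof, just phrased more structurally; the paper works directly with iterated brackets and never names the quotient or the grading. But the proposal has a genuine unfinished step and a couple of misstatements that happen to be harmless but should be flagged.

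The real gap is the one you flag yourself at the end of part \eqref{prop31.3} (``this monotonicity bookkeeping is the main obstacle''). This is not a routine bookkeeping point: it is exactly the lemma that the paper's proof opens with, namely that if $\beta\in\Delta_-$ and $-\beta\notin\mathbf{s}_+(\mathfrak{i})$, then $[\mathfrak{i}_+,\hat{\mathfrak{g}}_{\beta+\delta}]\subset\mathfrak{a}$. The proof of this uses that $\mathbf{s}_+(\mathfrak{i})$ (and more generally $\mathrm{supp}(\mathfrak{i}_+^{s})$, since these are all supports of $\mathfrak{b}$-ideals) is an up-set for $\leq$ in $\Delta_+$: if $\alpha\in\mathbf{s}_+(\mathfrak{i})$ and $\alpha+\beta\in\Delta_-$ then $\alpha\leq -\beta$ forces $-\beta\in\mathbf{s}_+(\mathfrak{i})$. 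Iterating this observation along a nonzero bracket $[\hat{\mathfrak{g}}_{\alpha_m},[\dots,[\hat{\mathfrak{g}}_{\alpha_1},\hat{\mathfrak{g}}_{\beta+\delta}]\dots]]\notin\mathfrak{a}$ gives that all partial sums $-(\beta+\alpha_1+\dots+\alpha_j)$ lie in $\mathbf{s}_+(\mathfrak{i})$, and rebracketing those gives $\hat{\mathfrak{g}}_{-\beta}\in\mathfrak{i}_+^{m-1}$; the converse is built by choosing a left-nested bracket presentation of $\hat{\mathfrak{g}}_{-\beta}$ in $\mathfrak{i}_+^{m-1}$ and reversing. Without this, your criterion for part \eqref{prop31.3} is only asserted, not proved. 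Note also that as you worded it (``fails to exist exactly when every $\beta$ has $-\beta\in\mathrm{supp}(\mathfrak{i}_+^{m-1})$'') the direction is reversed; the final sentence of your proposal states it correctly (no such $\beta$), but the intermediate sentence is wrong as written.

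Two smaller issues. First, the degree-$1$ component of $\bar{\mathfrak{i}}^{\,m}$ is $(\ad\mathfrak{i}_+)^m(\bar{\mathfrak{i}}_1)$, not $[\mathfrak{i}_+^{\,m-1},\bar{\mathfrak{i}}_1]$; the latter is contained in the former (via Jacobi), and both are $\not\subset\mathfrak{a}$ under the same criterion once the up-set lemma is in place, so the conclusion survives, but the characterization as stated is inaccurate and hides where the iterated-bracket analysis is actually used. Second, in part \eqref{prop31.2} the case $D\cap\mathrm{supp}(\mathfrak{i})=\{\delta\}$ is \emph{not} excluded: then $\mathfrak{i}=\hat{\mathfrak{g}}_\delta\oplus\mathfrak{a}\not\subset\mathfrak{a}$, $\mathfrak{i}_+=0$ so $m=0$, and indeed $\mathrm{qnd}(\mathfrak{i})=1$ since $\hat{\mathfrak{g}}_\delta$ is central mod $\mathfrak{a}$; your parenthetical aside dismissing this case as pathological is wrong, though the claimed conclusion is still right.

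Finally, a cosmetic remark: your step that $[\mathbf{s}_-,\mathbf{s}_-]$-brackets always land in $\mathfrak{a}$ is actually the easy part and needs no care at all — for $\beta,\beta'\in\Delta_-$ the target root is $\beta+\beta'+2\delta$, and if this is a root of $\hat{\mathfrak{g}}$ at all then $\beta+\beta'\in\Delta_-$, so the target lies in $D_1\subset D'$. It is the $[\mathbf{s}_+,\mathbf{s}_-]$-brackets (the up-set lemma above) where the real content is.
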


\begin{proof}
Claim \eqref{prop31.2} is clear, so in the rest of the proof we may assume $m>0$.

Obviously, $\mathrm{qnd}(\mathfrak{i})\geq m$. First we observe that if
$\beta\in\Delta_-$ is such that $-\beta\not\in\mathbf{s}_+(\mathfrak{i})$,
then $[\mathfrak{i}_+,\hat{\mathfrak{g}}_{\beta+\delta}]\subset\mathfrak{a}$.
Hence, if $\beta\in\Delta_-$ and $\alpha_1,\alpha_2,\dots,\alpha_k\in 
\mathbf{s}_+(\mathfrak{i})$ are such that 
\begin{displaymath}
[\hat{\mathfrak{g}}_{\alpha_k},[\hat{\mathfrak{g}}_{\alpha_{k-1}},
\dots[\hat{\mathfrak{g}}_{\alpha_1},
\hat{\mathfrak{g}}_{\beta+\delta}]\dots]]\not\in\mathfrak{a},
\end{displaymath}
then all the following elements: 
\begin{displaymath}
-\beta, -(\beta+\alpha_1),\dots, 
-(\beta+\alpha_1+\alpha_{2}+\dots+\alpha_{k-1})
\end{displaymath}
belong to $\mathbf{s}_+(\mathfrak{i})$.
This means that $\hat{\mathfrak{g}}_{\beta}\in \mathfrak{i}_+^{k-1}$,
implying \eqref{prop31.1}.

Conversely, if $\beta\in\Delta_-$ is such that 
$-\beta\in\mathrm{supp}(\mathfrak{i}_+^{m-1})$, then there exist
$\alpha_1,\alpha_2,\dots,\alpha_{m}\in 
\mathbf{s}_+(\mathfrak{i})$ with the following property: 
\begin{displaymath}
\hat{\mathfrak{g}}_{-\beta}=
[\hat{\mathfrak{g}}_{\alpha_{1}},[\hat{\mathfrak{g}}_{\alpha_{2}},
\dots[\hat{\mathfrak{g}}_{\alpha_{m-1}},
\hat{\mathfrak{g}}_{\alpha_{m}}]\dots]].
\end{displaymath}
This implies 
\begin{displaymath}
[\hat{\mathfrak{g}}_{\alpha_{m}},[\hat{\mathfrak{g}}_{\alpha_{m-1}},
\dots[\hat{\mathfrak{g}}_{\alpha_1},
\hat{\mathfrak{g}}_{\beta+\delta}]\dots]]\not\in\mathfrak{a} 
\end{displaymath}
and claim \eqref{prop31.3} follows, completing the proof.
\end{proof}

It would be interesting to have an explicit formula for the number
of basic ideal $\mathfrak{i}$ satisfying $\mathrm{qnd}(\mathfrak{i})=m$,
where $m=1,2,3,\dots$. This extends Problem~\ref{problem28} which
covers the case $m=1$.

\section{Arbitrary ideals and their supports}\label{s4}

\subsection{Supports of arbitrary ideals}\label{s4.1}

Let again $\hat{\mathfrak{g}}$ be an  untwisted affine Lie algebra. In this subsection we describe the support 
of an arbitrary nonzero $\hat{\mathfrak{b}}$-ideal in $\hat{\mathfrak{n}}_+$. We start with adjusting our
previous definition of the support to this more general situation. Let $\mathfrak{i}$ be a 
$\hat{\mathfrak{b}}$-ideal in $\hat{\mathfrak{n}}_+$. 
The {\em support} $\mathrm{supp}(\mathfrak{i})$ is the set
of all $\alpha\in\hat{\Delta}_+$ such that 
$\mathfrak{i}_{\alpha}:=\mathfrak{i}\cap \hat{\mathfrak{g}}_{\alpha}\neq 0$.
The {\em level} of a nonzero ideal $\mathfrak{i}$ is the minimal positive integer $l$ such that 
$l\delta\in\mathrm{supp}(\mathfrak{i})$. Obviously, the level is well-defined
for every nonzero ideal. Our first observation is the following:

\begin{proposition}\label{prop41}
Let $\mathfrak{i}$ be an ideal of level $l$. Then 
we have $\hat{\mathfrak{g}}_{(l+1)\delta}\subset\mathfrak{i}$
and for every $i>l$ and  $\alpha\in D_i$ we have
$\hat{\mathfrak{g}}_{\alpha}\subset \mathfrak{i}$.  
\end{proposition}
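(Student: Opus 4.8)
The plan is to reduce the statement to the case $l=1$ by the familiar shift trick, and then to mimic the argument of Theorem~\ref{thm1}\eqref{thm1.0}. First I would establish the key fact that $\hat{\mathfrak g}_{(l+1)\delta}\subset\mathfrak i$. Pick a nonzero $h\otimes t^l\in\mathfrak i$ with $h\in\mathfrak h$ (such an element exists since $l\delta\in\mathrm{supp}(\mathfrak i)$, and the root space $\hat{\mathfrak g}_{l\delta}$ is spanned by elements of the form $h\otimes t^l$). Choose any simple root $\alpha_i$ of $\Delta_+$ with $\alpha_i(h)\neq 0$ (possible as $h\neq 0$); let $e_i$ be a corresponding root vector and $f_i$ the opposite one. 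Then $[h\otimes t^l, e_i\otimes 1]=\alpha_i(h)\,e_i\otimes t^l\in\mathfrak i$ is a nonzero element of $\hat{\mathfrak g}_{\alpha_i+l\delta}$, and bracketing this in turn with $f_i\otimes t$ produces (up to a nonzero scalar and a term in $\hat{\mathfrak g}_{(l+1)\delta}$ coming from $[e_i,f_i]\otimes t^{l+1}$) an element of $\hat{\mathfrak g}_{(l+1)\delta}$; a short $\mathfrak{sl}_2$-type computation with the triple $(e_i\otimes 1, f_i\otimes t, h\otimes t^l)$ shows this element is nonzero. Since $\hat{\mathfrak g}_{(l+1)\delta}$ is spanned by the $h_j\otimes t^{l+1}$ and one can vary $i$ so that the various $h_i\otimes t^{l+1}$ obtained span all of $\hat{\mathfrak g}_{(l+1)\delta}$, we get $\hat{\mathfrak g}_{(l+1)\delta}\subset\mathfrak i$.

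Next I would show that $\hat{\mathfrak g}_{\alpha}\subset\mathfrak i$ for every $i>l$ and every \emph{real} $\alpha\in D_i$. Here one uses that $\hat{\mathfrak g}_{(l+1)\delta}\subset\mathfrak i$: bracketing $h\otimes t^{l+1}$ (for suitable $h$) with root vectors in $\hat{\mathfrak n}_+$ one reaches, by an inductive climb through the root poset exactly as in the proof of Theorem~\ref{thm1}\eqref{thm1.0} and Theorem~\ref{thm2}\eqref{thm2.2} (the $\mathfrak{sl}_2$-argument via the subalgebras $\mathfrak s_i$ attached to the real roots $\xi_i=\gamma_i-\gamma_{i-1}$), every real root of the form $\beta+i\delta$ with $i\ge l+1$; since all such root spaces are one-dimensional, nonzero intersection with $\mathfrak i$ forces containment. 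Finally, to pick up the remaining imaginary roots $i\delta$ with $i\ge l+2$: once $\mathfrak i$ contains a real root space $\hat{\mathfrak g}_{\beta+(i-1)\delta}$ with, say, $\beta\in\Delta_+$, one pushes up to $\hat{\mathfrak g}_{\alpha_{\max}+(i-1)\delta}\subset\mathfrak i$, brackets with a root vector of $\alpha_0=-\alpha_{\max}+\delta$, and lands in $\hat{\mathfrak g}_{i\delta}\cap\mathfrak i\neq 0$; thickness is not available here since these are imaginary root spaces, but the same spanning argument as in the first paragraph (varying which $h$ one arranges to produce) gives all of $\hat{\mathfrak g}_{i\delta}\subset\mathfrak i$. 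The case $\beta\in\Delta_-$ (so $\beta+i\delta\in D$) is handled symmetrically by bracketing with a root vector for $-\beta\in\Delta_+$, just as in the proof of Theorem~\ref{thm1}\eqref{thm1.0}.

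The one genuinely new point compared with the basic-ideal case is that $\mathfrak i$ is no longer assumed thick, so I cannot conclude containment of a whole (possibly higher-dimensional) imaginary root space $\hat{\mathfrak g}_{i\delta}$ merely from the fact that it meets $\mathfrak i$ nontrivially. The main obstacle is therefore to check that the explicit brackets produced above actually sweep out \emph{all} of $\hat{\mathfrak g}_{(l+1)\delta}$ (and then, by the climb, all of $\hat{\mathfrak g}_{i\delta}$ for $i\ge l+1$), rather than just a proper subspace. This is where one must be careful: starting from a single $h\otimes t^l$, bracketing with $e_i\otimes 1$ and then $f_i\otimes t$ returns a specific multiple of $\alpha_i(h)\, h_i\otimes t^{l+1}$ (with $h_i=[e_i,f_i]$), and since $h\neq 0$ one can choose $i$ with $\alpha_i(h)\neq 0$ to get \emph{one} coroot direction; to get the others one iterates, now starting from the real root vectors $e_j\otimes t^l\in\mathfrak i$ that have already been produced, and uses that the coroots $h_j=[e_j,f_j]$ span $\mathfrak h$. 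A clean way to package this is to observe that $\mathfrak i$, being a $\hat{\mathfrak b}$-ideal containing a nonzero element of $\hat{\mathfrak g}_{l\delta}$, contains $[\hat{\mathfrak n}_+, [\hat{\mathfrak g}_{l\delta}, \hat{\mathfrak n}_+]]$ intersected appropriately, and then note that $[\hat{\mathfrak n}_+,\hat{\mathfrak g}_{l\delta}]$ already spans $\bigoplus_{\alpha\in\Delta_+}\hat{\mathfrak g}_{\alpha+l\delta}$ while bracketing once more with $\hat{\mathfrak g}_{\delta-\alpha}$-type elements recovers all of $\hat{\mathfrak g}_{(l+1)\delta}$. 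Once the $l=1$ shape of the argument is in place, the general $l$ is literally the same computation with $t^l$ in place of $t$, and an induction on $i$ finishes the statement for all $i>l$.
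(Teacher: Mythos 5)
Your plan follows the paper's proof quite closely: start from a nonzero $h\otimes t^l$, pick a simple $\beta$ with $\beta(h)\neq 0$, push the resulting real root vectors around to fill $\hat{\mathfrak g}_{(l+1)\delta}$, and then climb inductively. You also correctly identify the crux: since $\mathfrak i$ is not assumed thick, containing a nonzero element of $\hat{\mathfrak g}_{(l+1)\delta}$ is not enough, and one must show that the brackets produced actually span that whole space.

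The gap is in how you resolve that crux. You say one iterates ``starting from the real root vectors $e_j\otimes t^l\in\mathfrak i$ that have already been produced, and uses that the coroots $h_j=[e_j,f_j]$ span $\mathfrak h$.'' But bracketing $h\otimes t^l$ with simple root vectors only produces $e_j\otimes t^l$ for those simple $\alpha_j$ with $\alpha_j(h)\neq 0$ --- possibly just one --- and from there, further bracketing with $\mathfrak n_+\otimes 1$ only moves \emph{up} the root poset, so what you actually obtain is $\hat{\mathfrak g}_{\xi+l\delta}\subset\mathfrak i$ for $\xi\in\Delta_+$ with $\xi\geq\beta$, not the simple coroots $h_j$ for all $j$. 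The commutators $[\hat{\mathfrak g}_{\xi+l\delta},\hat{\mathfrak g}_{-\xi+(l+1)\delta}]$ for such $\xi$ must then be shown to span $\hat{\mathfrak g}_{(l+1)\delta}\cong\mathfrak h$, and that needs a reason: the paper's observation is that the complement of $\{\xi\in\Delta_+:\xi\geq\beta\}$ in $\Delta_+$ lies in the hyperplane of $\mathfrak h^*$ spanned by the simple roots other than $\beta$, so the set $\{\xi\geq\beta\}$ spans $\mathfrak h^*$ and hence the corresponding coroots span $\mathfrak h$. Your ``clean packaging'' at the end has a similar problem: $\mathfrak i$ is only known to contain a line in $\hat{\mathfrak g}_{l\delta}$, not the full root space, so $[\hat{\mathfrak n}_+,\,\mathfrak i\cap\hat{\mathfrak g}_{l\delta}]$ does not hit $\bigoplus_{\alpha\in\Delta_+}\hat{\mathfrak g}_{\alpha+l\delta}$ in one step; one genuinely needs the climb plus the hyperplane argument before the sweep. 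With that piece inserted, the rest of your induction goes through as in the paper.
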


\begin{proof}
Let $h\otimes t^l$ be a nonzero element in $\mathfrak{i}_{l\delta}$. Then $h\neq 0$ and hence there exists
a simple root $\beta\in \Delta_{+}$ such that $\beta(h)\neq 0$. As $\mathfrak{i}$ is a 
$\hat{\mathfrak{b}}$-ideal and the root $l\delta+\beta$ is real, this implies 
$\hat{\mathfrak{g}}_{\beta+l\delta}\subset \mathfrak{i}$. Commuting $\hat{\mathfrak{g}}_{\beta+l\delta}$
with $\hat{\mathfrak{g}}_{\gamma}$ for suitable $\gamma\in \Delta_+$, we obtain 
$\hat{\mathfrak{g}}_{\xi+l\delta}\subset \mathfrak{i}$ for any $\xi\in \Delta_+$, $\xi\geq \beta$.
The complement to the set of all such $\xi$ belongs to the hyperplane spanned by all simple roots of
$\Delta_+$ different from $\beta$. This implies that the set of all such $\xi$ spans $\mathfrak{h}^*$.
It follows that the commutants of the form
\begin{displaymath}
[\hat{\mathfrak{g}}_{\xi+l\delta},\hat{\mathfrak{g}}_{-\xi+(l+1)\delta}],
\end{displaymath}
where $\xi$ is as above, span $\hat{\mathfrak{g}}_{(l+1)\delta}$. Hence
$\hat{\mathfrak{g}}_{(l+1)\delta}\subset \mathfrak{i}$. This implies that
$\hat{\mathfrak{g}}_{\beta+(l+1)\delta}\subset \mathfrak{i}$ for every real $\beta\in D$ and the proof is
completed by induction.
\end{proof}

As an immediate corollary we obtain:

\begin{corollary}\label{cor42}
Every nonzero $\hat{\mathfrak{b}}$-ideal in $\hat{\mathfrak{n}}_+$ has finite codimension.
\end{corollary}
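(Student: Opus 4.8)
The plan is to derive this immediately from Proposition~\ref{prop41}. Let $\mathfrak{i}$ be a nonzero $\hat{\mathfrak{b}}$-ideal in $\hat{\mathfrak{n}}_+$ and let $l$ be its level (well-defined by the remark preceding Proposition~\ref{prop41}). First I would apply Proposition~\ref{prop41}: it gives $\hat{\mathfrak{g}}_{\alpha}\subset\mathfrak{i}$ for every $i>l$ and every $\alpha\in D_i$. Equivalently, $\mathrm{supp}(\mathfrak{i})$ contains $\bigcup_{i>l}D_i$, so the complement of $\mathrm{supp}(\mathfrak{i})$ in $\hat{\Delta}_+$ is contained in $\bigcup_{i=0}^{l}D_i$, a finite set, since each $D_i=D+i\delta$ is a translate of the finite set $D$.

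Next I would bound the codimension. Using the root space decomposition $\hat{\mathfrak{n}}_+=\bigoplus_{\alpha\in\hat{\Delta}_+}\hat{\mathfrak{g}}_{\alpha}$, write $\hat{\mathfrak{n}}_+=V\oplus W$, where $V:=\bigoplus_{i=0}^{l}\bigoplus_{\alpha\in D_i}\hat{\mathfrak{g}}_{\alpha}$ and $W:=\bigoplus_{i>l}\bigoplus_{\alpha\in D_i}\hat{\mathfrak{g}}_{\alpha}$. By the previous paragraph $W\subset\mathfrak{i}$, so the composition $V\hookrightarrow\hat{\mathfrak{n}}_+\twoheadrightarrow\hat{\mathfrak{n}}_+/\mathfrak{i}$ is surjective, whence $\dim(\hat{\mathfrak{n}}_+/\mathfrak{i})\leq\dim V$.

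Finally I would note that $V$ is finite-dimensional, being a finite direct sum of the root spaces $\hat{\mathfrak{g}}_{\alpha}$, each of which is finite-dimensional ($1$-dimensional when $\alpha$ is a real root, of dimension $\rank\mathfrak{g}$ when $\alpha=k\delta$). Hence $\mathfrak{i}$ has finite codimension. There is no real obstacle here beyond Proposition~\ref{prop41}; the only points that need to be stated carefully are that $D$ is finite and that all root spaces of $\hat{\mathfrak{g}}$ are finite-dimensional, which together make $\dim V$ finite.
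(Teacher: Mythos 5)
Your proof is correct and follows the same route the paper intends: the paper presents the corollary as ``immediate'' from Proposition~\ref{prop41}, and you have simply spelled out the elementary bookkeeping (the support contains $\bigcup_{i>l}D_i$, so the complement of $\mathfrak{i}$ is controlled by finitely many finite-dimensional root spaces).
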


For a nonzero $\hat{\mathfrak{b}}$-ideal $\mathfrak{i}$ of level $l$ 
in $\hat{\mathfrak{n}}_+$ set:
\begin{displaymath}
\begin{array}{ccl}
\mathtt{a}_+(\mathfrak{i})&:=& \{\alpha\in\Delta_+:\alpha+(l-1)\delta\in \mathrm{supp}(\mathfrak{i})\};\\
\mathtt{a}_-(\mathfrak{i})&:=& \{\alpha\in\Delta_-:\alpha+l\delta\in \mathrm{supp}(\mathfrak{i})\};\\
\mathtt{a}'_+(\mathfrak{i})&:=& \{\alpha\in\Delta_+:\alpha+l\delta\in \mathrm{supp}(\mathfrak{i})\};\\
\mathtt{a}'_-(\mathfrak{i})&:=& \{\alpha\in\Delta_-:\alpha+(l+1)\delta\in \mathrm{supp}(\mathfrak{i})\}.
\end{array}
\end{displaymath}

These sets determine the support of $\mathfrak{i}$ in the following sense:

\begin{proposition}\label{prop43}
Let $\mathfrak{i}$ be an ideal of level $l$. Then we have:
\begin{multline*}
\mathrm{supp}(\mathfrak{i})=\{l\delta,(l+1)\delta\}\cup\{\alpha+(l-1)\delta:\alpha\in \mathtt{a}_+(\mathfrak{i})\}
\cup\\ \cup\{\alpha+l\delta:\alpha\in \mathtt{a}_-(\mathfrak{i})\cup\mathtt{a}'_+(\mathfrak{i})\}
\cup\\ \cup\{\alpha+(l+1)\delta:\alpha\in \mathtt{a}'_-(\mathfrak{i})\}\cup
\bigcup_{i\geq l+1}D_i.
\end{multline*}
\end{proposition}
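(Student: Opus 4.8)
The plan is to compute $\mathrm{supp}(\mathfrak{i})$ one $\delta$-layer at a time, using the partition $\hat{\Delta}_+=\bigcup_{i\ge 0}D_i$ from Section~\ref{s2.1} together with the fact (immediate from the definition of $D$ recalled there) that each $D_i$ is the disjoint union of $\Delta_+ + i\delta$, the single imaginary root $(i+1)\delta$, and $\Delta_- + (i+1)\delta$. With this decomposition in hand most of the claimed equality is bookkeeping. For $i\ge l+1$ Proposition~\ref{prop41} gives $D_i\subset\mathrm{supp}(\mathfrak{i})$ and also $(l+1)\delta\in\mathrm{supp}(\mathfrak{i})$, while $l\delta\in\mathrm{supp}(\mathfrak{i})$ by the definition of the level. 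Intersecting $\mathrm{supp}(\mathfrak{i})$ with $D_{l-1}$ and unwinding the definitions of $\mathtt{a}_+(\mathfrak{i})$ and $\mathtt{a}_-(\mathfrak{i})$ (the $\Delta_+$-part and the $\Delta_-$-part of that layer), together with $l\delta\in\mathrm{supp}(\mathfrak{i})$, produces exactly $\{l\delta\}\cup\{\alpha+(l-1)\delta:\alpha\in\mathtt{a}_+(\mathfrak{i})\}\cup\{\alpha+l\delta:\alpha\in\mathtt{a}_-(\mathfrak{i})\}$; similarly $\mathrm{supp}(\mathfrak{i})\cap D_l=\{(l+1)\delta\}\cup\{\alpha+l\delta:\alpha\in\mathtt{a}'_+(\mathfrak{i})\}\cup\{\alpha+(l+1)\delta:\alpha\in\mathtt{a}'_-(\mathfrak{i})\}$, now using $\mathtt{a}'_\pm(\mathfrak{i})$ and $(l+1)\delta\in\mathrm{supp}(\mathfrak{i})$. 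Taking the union of all these contributions over $i\ge 0$ yields precisely the right-hand side of the proposition.

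Hence the only substantive point is that $\mathrm{supp}(\mathfrak{i})\cap D_i=\varnothing$ for every $i\le l-2$ (a vacuous claim unless $l\ge 2$). Suppose, for a contradiction, that some $\gamma\in\mathrm{supp}(\mathfrak{i})$ lies in $D_i$ with $i\le l-2$, so that $1\le i+1\le l-1$. If $\gamma=(i+1)\delta$ this already contradicts the minimality of $l$, so $\gamma$ must be a real root. If $\gamma=\beta+i\delta$ with $\beta\in\Delta_+$, put $\eta:=-\beta+\delta$; if $\gamma=\beta+(i+1)\delta$ with $\beta\in\Delta_-$, put $\eta:=-\beta$. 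In both cases $\eta$ is a positive real root, $\gamma+\eta=(i+1)\delta$ is a root, and $\gamma-\eta$ (equal to $2\beta+(i-1)\delta$ in the first case and to $2\beta+(i+1)\delta$ in the second) is not a root, because $2\beta\notin\Delta$ for $\beta\in\Delta$ and therefore $2\beta+k\delta$ is not a root of the untwisted algebra $\hat{\mathfrak{g}}$ for any $k$. Since $\gamma$ is real, $\dim\hat{\mathfrak{g}}_\gamma=1$ and so $\hat{\mathfrak{g}}_\gamma\subset\mathfrak{i}$; since $\mathfrak{i}$ is a $\hat{\mathfrak{b}}$-ideal and $\hat{\mathfrak{g}}_\eta\subset\hat{\mathfrak{n}}_+$, the subspace $[\hat{\mathfrak{g}}_\eta,\hat{\mathfrak{g}}_\gamma]\subset\hat{\mathfrak{g}}_{(i+1)\delta}$ lies in $\mathfrak{i}$. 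The $\mathfrak{sl}_2$-theory for the real root $\eta$, applied to $\hat{\mathfrak{g}}_\gamma$, shows this bracket is nonzero: $\hat{\mathfrak{g}}_\gamma$ is a lowest weight space for the $\mathfrak{sl}_2$-subalgebra $\mathfrak{s}_\eta$ associated to $\eta$ (because $\gamma-\eta$ is not a root), and the $\eta$-string through $\gamma$ is $\gamma,\gamma+\eta,\dots,\gamma+q\eta$ with $q\ge 1$, so $\langle\gamma,\eta^\vee\rangle=-q<0$, which forces $e_\eta$ to act non-trivially on $\hat{\mathfrak{g}}_\gamma$. Thus $(i+1)\delta\in\mathrm{supp}(\mathfrak{i})$ with $0<i+1<l$, contradicting the minimality of $l$.

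The main (and essentially the only) obstacle is this last nonvanishing of $[\hat{\mathfrak{g}}_\eta,\hat{\mathfrak{g}}_\gamma]$; in the write-up I would simply invoke the standard fact that for a real root $\eta$ and a root $\gamma$ with $\gamma+\eta\in\hat{\Delta}$ and $\gamma-\eta\notin\hat{\Delta}$ one has $[\hat{\mathfrak{g}}_\eta,\hat{\mathfrak{g}}_\gamma]\ne 0$ (the $\eta$-string through $\gamma$ is unbroken since $\eta$ is real), which is exactly the mechanism already used in the proofs of Theorem~\ref{thm1}\eqref{thm1.0} and Proposition~\ref{prop41}. Everything else reduces to Proposition~\ref{prop41}, the definition of the level, and an unwinding of the definitions of $\mathtt{a}_\pm(\mathfrak{i})$ and $\mathtt{a}'_\pm(\mathfrak{i})$.
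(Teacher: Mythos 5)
Your proof is correct and follows essentially the same strategy as the paper's: show the right-hand side is contained in $\mathrm{supp}(\mathfrak{i})$ using Proposition~\ref{prop41} and the definitions, and then observe that the only substantive point is that $\mathrm{supp}(\mathfrak{i})\cap D_i=\varnothing$ for $i\leq l-2$, which is established by bracketing a putative element of that layer with a suitable positive real root to force $(i+1)\delta$ into the support, contradicting minimality of the level. You are in fact more careful than the paper in two places: you write the correct positive root $\eta$ to bracket with in the $\Delta_+$ case (the paper's ``$\hat{\mathfrak{g}}_{-\beta+\delta}$'' with $\beta\in D_i$ gives root $\delta$ rather than $(i+1)\delta$ and moreover lies outside $\hat{\mathfrak{n}}_+$ when $i\geq 1$, so it should read $\hat{\mathfrak{g}}_{-\gamma+\delta}$ in the paper's variables), and you spell out the $\eta$-string/$\mathfrak{sl}_2$ argument for the nonvanishing of the bracket, which the paper leaves implicit.
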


\begin{proof}
That $\mathrm{supp}(\mathfrak{i})$ contains the right hand side follows from Proposition~\ref{prop41}
and definitions. To prove the reverse inclusion it is enough to show that $\mathrm{supp}(\mathfrak{i})$
does not intersect $D_i$ for $i<l-1$. Assume that $\beta\in \mathrm{supp}(\mathfrak{i})\cap D_i$ for some
$i<l-1$. Then $\beta$ is a real root as $\mathfrak{i}$ has level $l$.  If $\beta=\gamma+i\delta$ for some
$\gamma\in\Delta_+$, then, commuting $\hat{\mathfrak{g}}_{\beta}$ with $\hat{\mathfrak{g}}_{-\beta+\delta}$
we get $(i+1)\delta\in \mathrm{supp}(\mathfrak{i})$, a contradiction. If $\beta=\gamma+(i+1)\delta$ for some
$\gamma\in\Delta_-$, then, commuting $\hat{\mathfrak{g}}_{\beta}$ with $\hat{\mathfrak{g}}_{-\gamma}$
we again get $(i+1)\delta\in \mathrm{supp}(\mathfrak{i})$, a contradiction. The claim follows.
\end{proof}

For a nonzero $\hat{\mathfrak{b}}$-ideal $\mathfrak{i}$ in $\hat{\mathfrak{n}}_+$ set
$\mathfrak{i}_+:=\oplus_{\alpha\in \mathtt{a}_+(\mathfrak{i})} \hat{\mathfrak{g}}_{\alpha}$
and define $\mathfrak{i}'_+$ similarly (using $\mathtt{a}'_+(\mathfrak{i})$). 
Then both $\mathfrak{i}_+$ and $\mathfrak{i}'_+$ are 
$\mathfrak{b}$-ideals of $\mathfrak{n}_+$. Denote by $\mathfrak{i}_-$ the 
$\mathfrak{b}$-submodule of $\mathfrak{g}/\mathfrak{b}$ which is canonically identified with 
$\oplus_{\alpha\in \mathtt{a}_-(\mathfrak{i})} \hat{\mathfrak{g}}_{\alpha}$
and define $\mathfrak{i}'_-$ similarly (using $\mathtt{a}'_-(\mathfrak{i})$).
From Proposition~\ref{prop43} it follows that the quadruple 
\begin{displaymath}
\underline{\mathfrak{i}}:=
(\mathfrak{i}_+,\mathfrak{i}_-,\mathfrak{i}'_+,\mathfrak{i}'_-) 
\end{displaymath}
determines $\mathrm{supp}(\mathfrak{i})$. Taking the $\mathfrak{h}$-complements to $\mathfrak{i}_-$
and $\mathfrak{i}'_-$ in $\hat{\mathfrak{n}}_-$ and applying the Chevalley involution we obtain that
$\mathrm{supp}(\mathfrak{i})$ is uniquely determined by a quadruple of $\mathfrak{b}$-ideals in
$\mathfrak{n}$. This implies:

\begin{corollary}\label{cor44}
The cardinality of the set of all possible supports for 
nonzero $\hat{\mathfrak{b}}$-ideals of level $l$ in $\hat{\mathfrak{n}}_+$ does not exceed the
fourth power of the number of $\mathfrak{b}$-ideals in $\mathfrak{n}_+$.
\end{corollary}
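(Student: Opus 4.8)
The plan is to recast the discussion immediately preceding the statement as an explicit injection and then count. The key preliminary observation is that the four sets $\mathtt{a}_+(\mathfrak{i})$, $\mathtt{a}_-(\mathfrak{i})$, $\mathtt{a}'_+(\mathfrak{i})$, $\mathtt{a}'_-(\mathfrak{i})$, as well as the level $l=\min\{k>0:k\delta\in\mathrm{supp}(\mathfrak{i})\}$, are defined purely through membership conditions involving $\mathrm{supp}(\mathfrak{i})$; hence they depend only on the \emph{set} $S:=\mathrm{supp}(\mathfrak{i})$, and so do the modules $\mathfrak{i}_+,\mathfrak{i}_-,\mathfrak{i}'_+,\mathfrak{i}'_-$ built from them. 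Consequently it suffices to produce an injection from the set of all possible supports of nonzero $\hat{\mathfrak{b}}$-ideals of level $l$ into the fourth Cartesian power of the set of $\mathfrak{b}$-ideals in $\mathfrak{n}_+$.

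To construct this map I would argue as follows. Given a support $S$ of some nonzero level-$l$ ideal, the spaces $\mathfrak{i}_+$ and $\mathfrak{i}'_+$ are $\mathfrak{b}$-ideals of $\mathfrak{n}_+$ (as recorded before the statement; concretely, closure of $\mathtt{a}_+$ and $\mathtt{a}'_+$ under adding positive roots inside $\Delta_+$ follows from $\mathfrak{i}$ being a $\hat{\mathfrak{b}}$-ideal, the relevant target roots being real so that the brackets are nonzero by the $\mathfrak{sl}_2$-string argument already used in the proofs of Theorem~\ref{thm2} and Proposition~\ref{prop41}). The spaces $\mathfrak{i}_-$ and $\mathfrak{i}'_-$ are $\mathfrak{b}$-submodules of $\mathfrak{g}/\mathfrak{b}$, identified with $\mathfrak{n}_-$ as an $\mathfrak{h}$-module. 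To turn each of them into a $\mathfrak{b}$-ideal of $\mathfrak{n}_+$, I would take its $\mathfrak{h}$-complement inside $\mathfrak{n}_-$ --- a $\mathfrak{b}_-$-submodule of $\mathfrak{n}_-$ by exactly the elementary root-combinatorics observation made for basic ideals in Section~\ref{s2.3} and used in Corollary~\ref{cor21} --- and then apply the Chevalley involution $\sigma$, which interchanges $\mathfrak{n}_+$ with $\mathfrak{n}_-$ and $\mathfrak{b}$ with $\mathfrak{b}_-$ and sends $\mathfrak{g}_\beta$ to $\mathfrak{g}_{-\beta}$; the outcome is a $\mathfrak{b}$-submodule of $\mathfrak{n}_+$. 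Writing $\mathfrak{i}_-^{\perp}$ for the $\mathfrak{h}$-complement in $\mathfrak{n}_-$, the map in question is
\begin{displaymath}
S\longmapsto\bigl(\mathfrak{i}_+,\,\sigma(\mathfrak{i}_-^{\perp}),\,\mathfrak{i}'_+,\,\sigma((\mathfrak{i}'_-)^{\perp})\bigr).
\end{displaymath}
Both operations ``$\mathfrak{h}$-complement in $\mathfrak{n}_-$'' and ``$\sigma$'' are bijections of the relevant lattices, so the last two coordinates lose no information.

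Injectivity is then immediate: by Proposition~\ref{prop43} the set $S=\mathrm{supp}(\mathfrak{i})$ is completely determined by the quadruple of sets $\mathtt{a}_\pm,\mathtt{a}'_\pm$, equivalently by $(\mathfrak{i}_+,\mathfrak{i}_-,\mathfrak{i}'_+,\mathfrak{i}'_-)$, and since the complement-plus-$\sigma$ step is reversible, $S$ is recovered from the image of the displayed map. Hence the number of possible supports of nonzero $\hat{\mathfrak{b}}$-ideals of level $l$ is at most the number of quadruples of $\mathfrak{b}$-ideals in $\mathfrak{n}_+$, that is, the fourth power of the number of such ideals, which is the assertion.

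I do not expect any genuine obstacle here: Proposition~\ref{prop43} carries all the weight and this corollary is a bookkeeping consequence, entirely parallel to how Corollary~\ref{cor21} followed from Theorem~\ref{thm2}. The one point deserving care is the well-definedness of the conversion step --- that the $\mathfrak{h}$-complement of a $\mathfrak{b}$-submodule of $\mathfrak{g}/\mathfrak{b}$ is indeed $\mathfrak{b}_-$-stable inside $\mathfrak{n}_-$, and that $\sigma$ then really lands in $\mathfrak{n}_+$ as a $\mathfrak{b}$-submodule --- but this is precisely the duality already exploited for basic ideals, and it reduces to an elementary check on root strings.
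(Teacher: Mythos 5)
Your proof is correct and follows essentially the same route as the paper: you form the quadruple $(\mathfrak{i}_+,\mathfrak{i}_-,\mathfrak{i}'_+,\mathfrak{i}'_-)$, convert the two $\mathfrak{b}$-submodules of $\mathfrak{g}/\mathfrak{b}$ into $\mathfrak{b}$-ideals of $\mathfrak{n}_+$ via the $\mathfrak{h}$-complement and the Chevalley involution, and invoke Proposition~\ref{prop43} for injectivity --- exactly the bookkeeping the paper does in the paragraph preceding the corollary.
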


As expected, not every quadruple can appear as $\underline{\mathfrak{i}}$
for some nonzero $\hat{\mathfrak{b}}$-ideal of level $l$ 
in $\hat{\mathfrak{n}}_+$. Here are some natural restrictions:

\begin{proposition}\label{prop45}
Let ${\mathfrak{i}}$ be a nonzero $\hat{\mathfrak{b}}$-ideal 
of level $l$ in $\hat{\mathfrak{n}}_+$.
\begin{enumerate}[$($a$)$]
\item\label{prop45.1} If $\mathfrak{i}_+\neq 0$, then
$\mathfrak{i}'_-=\mathfrak{g}/\mathfrak{b}$.
\item\label{prop45.2} Both $\mathfrak{i}'_+$ and $\mathfrak{i}'_-$
are nonzero.
\item\label{prop45.3}
If $\mathtt{a}_-(\mathfrak{i})$ contains $-\alpha$
for all simple $\alpha\in\Delta_+$, then $\mathfrak{i}'_+=\mathfrak{n}_+$. 
\item\label{prop45.4}
If $\mathfrak{i}_+=\mathfrak{n}_+$, then 
$\mathtt{a}_-(\mathfrak{i})$ equals either
$\Delta_-$ or $\Delta_-\setminus\{-\alpha_{\mathrm{max}}\}$.
If $\mathfrak{i}'_+=\mathfrak{n}_+$, then 
$\mathtt{a}'_-(\mathfrak{i})$ equals either
$\Delta_-$ or $\Delta_-\setminus\{-\alpha_{\mathrm{max}}\}$.
\end{enumerate}
\end{proposition}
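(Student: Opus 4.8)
The plan is to exploit the defining property of a $\hat{\mathfrak{b}}$-ideal: if $\hat{\mathfrak{g}}_\alpha\subset\mathfrak{i}$ and $\hat{\mathfrak{g}}_\gamma\subset\hat{\mathfrak{n}}_+$ with $[\hat{\mathfrak{g}}_\alpha,\hat{\mathfrak{g}}_\gamma]\ne 0$, then $\hat{\mathfrak{g}}_{\alpha+\gamma}\subset\mathfrak{i}$; and that $\hat{\mathfrak{h}}\subset\hat{\mathfrak{b}}$ forces each $\hat{\mathfrak{g}}_\alpha$ to be entirely inside or outside $\mathfrak{i}$ whenever $\alpha$ is real (one-dimensional root space), while for $\alpha=k\delta$ we only know $\mathfrak{i}_{k\delta}\ne 0$ is the relevant condition. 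Throughout, the translation $\beta\mapsto\beta+\delta$ on roots corresponds to applying $\operatorname{ad}(\mathfrak{g}_\delta)$, so Proposition~\ref{prop41} (which gives $\hat{\mathfrak{g}}_{(l+1)\delta}\subset\mathfrak{i}$) will be the engine for moving between levels. For part~\eqref{prop45.1}: if $\mathfrak{i}_+\ne 0$, pick $\alpha\in\mathtt{a}_+(\mathfrak{i})$, so $\alpha+(l-1)\delta\in\mathrm{supp}(\mathfrak{i})$; using the proof technique of Theorem~\ref{thm1}\eqref{thm1.0} (commuting up to $\alpha_{\mathrm{max}}+(l-1)\delta$, then with a root element for $\alpha_0=-\alpha_{\mathrm{max}}+\delta$) one produces $l\delta\in\mathrm{supp}(\mathfrak{i})$, consistent with the level; then commuting $\hat{\mathfrak{g}}_{\alpha+(l-1)\delta}$ with $\hat{\mathfrak{g}}_{-\alpha+\delta}$ (a positive root element, as $\alpha$ is a positive root $\le\alpha_{\mathrm{max}}$) lands in $\hat{\mathfrak{g}}_{l\delta}$, and combining with Proposition~\ref{prop41} to get $(l+1)\delta$, we then commute $\hat{\mathfrak{g}}_{l\delta}$ with $\hat{\mathfrak{g}}_{-\beta+\delta}$ for every positive root $\beta$ to fill all of $\{-\beta+(l+1)\delta:\beta\in\Delta_+\}$ into $\mathrm{supp}(\mathfrak{i})$, i.e. $\mathtt{a}'_-(\mathfrak{i})=\Delta_-$, which says $\mathfrak{i}'_-=\mathfrak{g}/\mathfrak{b}$.

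For part~\eqref{prop45.2}: since $\mathfrak{i}$ is nonzero of level $l$, $l\delta\in\mathrm{supp}(\mathfrak{i})$, and Proposition~\ref{prop41} gives $(l+1)\delta\in\mathrm{supp}(\mathfrak{i})$. Commuting a nonzero element $h\otimes t^{l+1}\in\mathfrak{i}$ with a positive simple root vector $e_\beta$ (note $\beta(h)\ne 0$ for some simple $\beta$, exactly as in the proof of Proposition~\ref{prop41}) gives $\beta+(l+1)\delta\in\mathrm{supp}(\mathfrak{i})$, hence $\mathtt{a}'_+(\mathfrak{i})\ne\varnothing$ and $\mathfrak{i}'_+\ne 0$; dually, commuting $\hat{\mathfrak{g}}_{\beta+(l+1)\delta}$ with $\hat{\mathfrak{g}}_{-\beta}$ (positive root $-\beta+\delta\in\hat{\Delta}_+$) lands in $\hat{\mathfrak{g}}_{(l+1)\delta}$, which we already have—so instead use $l\delta\in\mathrm{supp}(\mathfrak{i})$: commuting $h'\otimes t^l$ with a simple root vector gives $\beta+l\delta\in\mathrm{supp}(\mathfrak{i})$ for some simple $\beta$, hence $\mathtt{a}'_+(\mathfrak{i})\ni\beta$ again, and then commuting $\hat{\mathfrak{g}}_{\beta+l\delta}$ with $\hat{\mathfrak{g}}_{-\beta+\delta}$ gives $(l+1)\delta$, while commuting with $\hat{\mathfrak{g}}_{-\beta+2\delta}$... better: from $(l+1)\delta$ and a simple vector, then commuting down by $\hat{\mathfrak{g}}_{-\gamma}$ for $\gamma$ simple we get $-\gamma+(l+1)\delta\in\mathrm{supp}(\mathfrak{i})$, so $\mathtt{a}'_-(\mathfrak{i})\ne\varnothing$ and $\mathfrak{i}'_-\ne 0$. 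For part~\eqref{prop45.3}: if every $-\alpha$ with $\alpha$ simple lies in $\mathtt{a}_-(\mathfrak{i})$, then $-\alpha+l\delta\in\mathrm{supp}(\mathfrak{i})$ for all simple $\alpha$; commuting $\hat{\mathfrak{g}}_{-\alpha+l\delta}$ with $\hat{\mathfrak{g}}_{\alpha}\subset\hat{\mathfrak{n}}_+$ yields $l\delta$ (consistent), and commuting $\hat{\mathfrak{g}}_{-\alpha+l\delta}$ with $\hat{\mathfrak{g}}_\beta$ for another simple $\beta\ne\alpha$ (connected in the Dynkin diagram) gives $\beta-\alpha+l\delta$, and iterating one reaches $\gamma+l\delta$ for every positive root $\gamma$ (standard connectivity argument on the Dynkin diagram, since the simple roots with their negatives-at-level-$l$ generate enough to hit every $\gamma+l\delta$), i.e. $\mathtt{a}'_+(\mathfrak{i})=\Delta_+$, so $\mathfrak{i}'_+=\mathfrak{n}_+$.

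For part~\eqref{prop45.4}: suppose $\mathfrak{i}_+=\mathfrak{n}_+$, i.e. $\alpha+(l-1)\delta\in\mathrm{supp}(\mathfrak{i})$ for every $\alpha\in\Delta_+$. In particular $\alpha_{\mathrm{max}}+(l-1)\delta\in\mathrm{supp}(\mathfrak{i})$; commuting with $\hat{\mathfrak{g}}_{\alpha_0}=\hat{\mathfrak{g}}_{-\alpha_{\mathrm{max}}+\delta}$ gives $l\delta$, and more usefully, for any simple $\alpha$, commuting $\hat{\mathfrak{g}}_{(\alpha_{\mathrm{max}}-\alpha)+(l-1)\delta}$ (a positive root at level $l-1$, in $\mathfrak{i}$) with $\hat{\mathfrak{g}}_{\alpha_0}$ gives $-\alpha+l\delta\in\mathrm{supp}(\mathfrak{i})$, so $-\alpha\in\mathtt{a}_-(\mathfrak{i})$ for every simple $\alpha$, and then by connectivity (as in~\eqref{prop45.3}) one gets $-\gamma+l\delta\in\mathrm{supp}(\mathfrak{i})$ for every $\gamma\in\Delta_+$ with $\gamma\ne\alpha_{\mathrm{max}}$; the only question is whether $-\alpha_{\mathrm{max}}+l\delta$ (equivalently $\alpha_0+(l-1)\delta$) lies in $\mathrm{supp}(\mathfrak{i})$—this is genuinely optional, whence $\mathtt{a}_-(\mathfrak{i})$ is $\Delta_-$ or $\Delta_-\setminus\{-\alpha_{\mathrm{max}}\}$. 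The second sentence of~\eqref{prop45.4} is proved identically with all levels shifted up by one (using $\mathfrak{i}'_+=\mathfrak{n}_+$, i.e. level-$l$ positive roots all present, and $\mathtt{a}'_-$ at level $l+1$), and Proposition~\ref{prop41} guarantees nothing is lost in the shift. The main obstacle I expect is the connectivity argument showing that having all simple-root-at-level-$l$ negatives (i.e. $\{-\alpha_i+l\delta\}$) in $\mathrm{supp}(\mathfrak{i})$ forces all $-\gamma+l\delta$ for non-maximal $\gamma$: one must check that for any positive root $\gamma\ne\alpha_{\mathrm{max}}$ there is a chain $-\alpha_{i}=-\gamma_0,-\gamma_1,\dots,-\gamma_k=-\gamma$ with each $\gamma_{s}-\gamma_{s-1}$ a simple root (so that the bracket $[\hat{\mathfrak{g}}_{\gamma_s-\gamma_{s-1}},\hat{\mathfrak{g}}_{-\gamma_{s-1}+l\delta}]$ is nonzero by $\mathfrak{sl}_2$-theory, using that these roots are real), together with the subtlety that the step landing on $-\alpha_{\mathrm{max}}$ would require $\alpha_0$ at level $l-1$ or $l$ which may or may not be in the ideal—this is exactly what produces the two-case conclusion rather than a clean one.
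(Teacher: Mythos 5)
Your overall strategy (commutator gymnastics with the $\hat{\mathfrak{b}}$-ideal property and Proposition~\ref{prop41}) matches the paper's in spirit, but several of the concrete steps do not work as written; here are the gaps.

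\textbf{Part~\eqref{prop45.1}.} Your concluding step ``commute $\hat{\mathfrak{g}}_{l\delta}$ with $\hat{\mathfrak{g}}_{-\beta+\delta}$ for every positive root $\beta$'' implicitly assumes $\hat{\mathfrak{g}}_{l\delta}\subset\mathfrak{i}$. We only know $\mathfrak{i}_{l\delta}\neq 0$; this can be a proper (e.g.\ one-dimensional) subspace of $\mathfrak{h}\otimes t^l$, and then $[\mathfrak{i}_{l\delta},\hat{\mathfrak{g}}_{-\beta+\delta}]$ vanishes for all $\beta$ with $\beta(h)=0$, where $h\otimes t^l$ spans $\mathfrak{i}_{l\delta}$. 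The paper avoids this: $\mathfrak{i}_+$ is a nonzero $\mathfrak{b}$-ideal of $\mathfrak{n}_+$, hence $\alpha_{\mathrm{max}}\in\mathtt{a}_+(\mathfrak{i})$; applying $\operatorname{ad}\hat{\mathfrak{g}}_{\alpha_0}$ twice to $\hat{\mathfrak{g}}_{\alpha_{\mathrm{max}}+(l-1)\delta}$ lands (nontrivially, by $\mathfrak{sl}_2$-theory for the real root $\alpha_0$) in $\hat{\mathfrak{g}}_{-\alpha_{\mathrm{max}}+(l+1)\delta}$, so $-\alpha_{\mathrm{max}}\in\mathtt{a}'_-(\mathfrak{i})$, which forces $\mathfrak{i}'_-=\mathfrak{g}/\mathfrak{b}$ because $\mathfrak{g}/\mathfrak{b}$ is generated as a $\mathfrak{b}$-module by its lowest weight space.

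\textbf{Part~\eqref{prop45.2}.} Your step ``commuting down by $\hat{\mathfrak{g}}_{-\gamma}$'' is not available: $\hat{\mathfrak{g}}_{-\gamma}\not\subset\hat{\mathfrak{b}}$, so this bracket is not guaranteed to stay in $\mathfrak{i}$. The correct move is to bracket $h\otimes t^l\in\mathfrak{i}_{l\delta}$ with $x_{-\alpha}\otimes t\in\hat{\mathfrak{g}}_{-\alpha+\delta}\subset\hat{\mathfrak{n}}_+$, giving $-\alpha(h)\,x_{-\alpha}\otimes t^{l+1}\neq 0$; this yields $-\alpha\in\mathtt{a}'_-(\mathfrak{i})$ for the same $\alpha$ with $\alpha(h)\neq 0$ that gave $\alpha\in\mathtt{a}'_+(\mathfrak{i})$.

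\textbf{Parts~\eqref{prop45.3} and~\eqref{prop45.4}.} The ``connectivity'' argument fails. If $\alpha,\beta$ are distinct simple roots, then $\beta-\alpha$ is never a root, so $[\hat{\mathfrak{g}}_{\beta},\hat{\mathfrak{g}}_{-\alpha+l\delta}]=0$ and the chain never starts. More fundamentally, bracketing with elements of $\hat{\mathfrak{b}}$ can only make the $\Delta$-component of a root \emph{larger}, so you cannot go from $-\alpha+l\delta$ (with $\alpha$ simple) to $-\gamma+l\delta$ (with $\gamma$ bigger) inside $\mathfrak{i}$. For~\eqref{prop45.3}, the paper instead notes that $[\hat{\mathfrak{g}}_{\alpha},\hat{\mathfrak{g}}_{-\alpha+l\delta}]$ over all simple $\alpha$ spans the whole of $\hat{\mathfrak{g}}_{l\delta}=\mathfrak{h}\otimes t^l$ (since the coroots span $\mathfrak{h}$), and then commuting with arbitrary $x_\beta$ gives every $\beta+l\delta$. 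For~\eqref{prop45.4}, the paper works directly: for each non-maximal $\beta\in\Delta_+$ choose a simple $\alpha$ with $\alpha+\beta\in\Delta_+$; then $\hat{\mathfrak{g}}_{\alpha+(l-1)\delta}\subset\mathfrak{i}$ (since $\mathfrak{i}_+=\mathfrak{n}_+$) and bracketing with $\hat{\mathfrak{g}}_{-\alpha-\beta+\delta}\subset\hat{\mathfrak{n}}_+$ produces $\hat{\mathfrak{g}}_{-\beta+l\delta}$. There is no need to first pass through negatives of simple roots and then chain; the argument addresses each non-maximal $\beta$ directly.
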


\begin{proof}
If $\mathfrak{i}_+\neq 0$, then $\alpha_{\mathrm{max}}\in
\mathtt{a}_+(\mathfrak{i})$. Applying twice the adjoint action of
$\hat{\mathfrak{g}}_{-\alpha_{\mathrm{max}}+\delta}$ to 
$\hat{\mathfrak{g}}_{\alpha_{\mathrm{max}}+(l-1)\delta}$
we get a nonzero space and thus $-\alpha_{\mathrm{max}}\in
\mathtt{a}'_-(\mathfrak{i})$. The latter implies
$\mathtt{a}'_-(\mathfrak{i})=\Delta_-$ and claim \eqref{prop45.1} follows.

Let $h\otimes t^l$ be a nonzero element in 
$\mathfrak{i}_{l\delta}$. Then there exists $\alpha\in\Delta_+$
such that $\alpha(h)\neq 0$, which implies 
$\alpha\in \mathtt{a}'_+(\mathfrak{i})$ and
$-\alpha\in \mathtt{a}'_-(\mathfrak{i})$. Claim \eqref{prop45.2} follows.
 
If $\mathtt{a}_-(\mathfrak{i})$ contains $-\alpha$
for all simple $\alpha$, then commutations with the corresponding
$\hat{\mathfrak{g}}_{\alpha}$'s imply 
$\hat{\mathfrak{g}}_{l\delta}\subset\mathfrak{i}$. Now claim  
\eqref{prop45.3} follows using commutation with 
$\hat{\mathfrak{g}}_{\beta}$, $\beta\in\Delta_+$.

If $\beta\in\Delta_+$ is not maximal, then there exists a simple
root $\alpha\in\Delta_+$ such that $\alpha+\beta\geq \beta$.
Commuting $\hat{\mathfrak{g}}_{\alpha+(l-1)\delta}$ with
$\hat{\mathfrak{g}}_{-\alpha-\beta+\delta}$ gives that 
$\hat{\mathfrak{g}}_{-\beta+l\delta}\subset \mathfrak{i}$.
This proves the first part of claim \eqref{prop45.4} and the
second part is proved similarly.
\end{proof}

A more detailed description of possible supports in type $A$
is given in Subsection~\ref{s4.3} below.
                                                                          
\subsection{Support equivalent ideals}\label{s4.2}

Two nonzero $\hat{\mathfrak{b}}$-ideals $\mathfrak{i}$
and $\mathfrak{j}$ in $\hat{\mathfrak{n}}_+$ will be called
{\em support equivalent} provided that 
$\mathrm{supp}(\mathfrak{i})=\mathrm{supp}(\mathfrak{j})+k\delta$
for some $k\in\mathbb{Z}$. This notion is motivated by the following
statement:

\begin{proposition}\label{prop51}
For $l\in\mathbb{N}$ let $\mathcal{S}_l$ denote the set of all
possible supports of $\hat{\mathfrak{b}}$-ideals of level $l$
in $\hat{\mathfrak{n}}_+$. Then the map $X\mapsto X+\delta$
is a bijection between $\mathcal{S}_l$ and $\mathcal{S}_{l+1}$. 
\end{proposition}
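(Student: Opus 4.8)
The plan is to lift the shift $X\mapsto X+\delta$ on supports to an honest operation on ideals, namely multiplication by a power of $t$ inside the loop algebra. For $s\in\mathbb{Z}$ let $\mu_s$ be the linear map on $\mathfrak{g}\otimes\mathbb{C}[t,t^{-1}]$ with $\mu_s(x\otimes t^k)=x\otimes t^{k+s}$. First I would record its intertwining property: a direct computation --- in which one checks that the central cocycle never contributes either inside $\hat{\mathfrak{n}}_+$ or in brackets with $\mathfrak{h}$ --- gives $[u,\mu_s(v)]=\mu_s([u,v])$ whenever $u$ lies in $\mathfrak{h}$, in the centre of $\hat{\mathfrak{g}}$, or in a homogeneous component $\mathfrak{g}\otimes t^j$, so that $\mu_s$ commutes with the adjoint action of $\mathfrak{h}$, of the centre, and of $\hat{\mathfrak{n}}_+$. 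It does \emph{not} commute with $\mathrm{ad}\,d$, but this is harmless: any $\hat{\mathfrak{b}}$-ideal $\mathfrak{k}\subseteq\hat{\mathfrak{n}}_+$ is the direct sum of the root spaces it meets (being stable under $\mathrm{ad}\,\mathfrak{h}$ and $\mathrm{ad}\,d$), and $\mu_s$ maps root spaces to root spaces; hence whenever $\mu_s$ is defined on $\mathfrak{k}$ and $\mu_s(\mathfrak{k})\subseteq\hat{\mathfrak{n}}_+$, the image $\mu_s(\mathfrak{k})$ is again root-graded, therefore automatically stable under $\mathrm{ad}\,d$ and under the centre, and by the intertwining property it is then a $\hat{\mathfrak{b}}$-ideal in $\hat{\mathfrak{n}}_+$ with $\mathrm{supp}(\mu_s(\mathfrak{k}))=\mathrm{supp}(\mathfrak{k})+s\delta$ (since $\mu_s$ is injective on each root space).

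Injectivity of $X\mapsto X+\delta$ is then immediate, as this map is already injective on subsets of $\hat{\mathfrak{h}}^*$. For well-definedness, I would take $X=\mathrm{supp}(\mathfrak{i})$ with $\mathfrak{i}$ a $\hat{\mathfrak{b}}$-ideal of level $l$ and set $\mathfrak{j}:=\mu_1(\mathfrak{i})$. Since $\mu_1$ carries $\hat{\mathfrak{n}}_+$ into $\mathfrak{g}\otimes t\,\mathbb{C}[t]\subseteq\hat{\mathfrak{n}}_+$, the previous paragraph applies and $\mathfrak{j}$ is a nonzero $\hat{\mathfrak{b}}$-ideal in $\hat{\mathfrak{n}}_+$ with support $X+\delta$. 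From $l\delta\in X$ and minimality of $l$ one gets that the imaginary roots in $X+\delta$ are precisely the $k\delta$ with $k\geq l+1$, so $\mathfrak{j}$ has level $l+1$ and $X+\delta\in\mathcal{S}_{l+1}$.

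For surjectivity I would run the same construction backwards with $s=-1$. Let $Y=\mathrm{supp}(\mathfrak{j})$ with $\mathfrak{j}$ of level $l+1$. By Proposition~\ref{prop43} every root occurring in $Y$ has the form $\alpha+m\delta$ with $m\geq l$, and the roots of $Y$ realizing the minimal value $m=l$ (if any) are exactly those of the shape $\alpha+l\delta$ with $\alpha\in\Delta_+$. Since $l\geq 1$, the first statement shows $\mathfrak{j}\subseteq\mathfrak{g}\otimes t\,\mathbb{C}[t]$, so $\mu_{-1}$ is defined on $\mathfrak{j}$; the second shows that $\mu_{-1}$ moves the bottom layer of $\mathfrak{j}$ into root spaces for roots $\alpha+(l-1)\delta$ with $\alpha\in\Delta_+$, which lie in $\hat{\Delta}_+$ whether $l=1$ or $l>1$, while every other root of $Y$ merely drops to $t$-degree $\geq l\geq 1$ and so stays positive. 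Hence $\mu_{-1}(\mathfrak{j})\subseteq\hat{\mathfrak{n}}_+$, the first paragraph applies again, and $\mu_{-1}(\mathfrak{j})$ is a nonzero $\hat{\mathfrak{b}}$-ideal in $\hat{\mathfrak{n}}_+$ with support $Y-\delta$ and level $l$ (the smallest imaginary root of $Y$ being $(l+1)\delta$). Therefore $Y-\delta\in\mathcal{S}_l$ and $Y=(Y-\delta)+\delta$ lies in the image, which finishes the proof.

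The one genuinely delicate point is the surjectivity step: I must make sure that dividing an ideal by $t$ does not push it out of $\hat{\mathfrak{n}}_+$, and this is where the hypotheses are really used --- $l\in\mathbb{N}$ forces the level $l+1$ to be at least $2$, and Proposition~\ref{prop43} guarantees that in a level-$(l+1)$ ideal the lowest $t$-degree that occurs is $l\geq 1$ and is attained only along finite positive roots. Everything else, i.e.\ the intertwining identities and the bookkeeping of levels, is routine.
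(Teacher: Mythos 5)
Your proposal is correct and takes essentially the same approach as the paper: the paper defines the very same shift maps (multiply by $t^{\pm 1}$ on the loop-algebra component) and asserts without detail that they are mutually inverse bijections between ideals of level $l$ and level $l+1$, whereas you carefully verify this, in particular spelling out why the cocycle term never contributes, why the failure of $\mu_s$ to commute with $\operatorname{ad}d$ is harmless given root-gradedness, and why dividing by $t$ keeps a level-$(l+1)$ ideal inside $\hat{\mathfrak{n}}_+$ (the one genuinely delicate point, for which you correctly invoke Proposition~\ref{prop43}).
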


\begin{proof}
If $\mathfrak{i}$ is an ideal of level $l$, define the ideal
$\mathfrak{j}$ of level $l+1$ as follows: $\mathfrak{j}$ is generated,
as a vector space, by all elements of the form $x\otimes t^{i+1}$ whenever
$x\in\mathfrak{g}$ is such that and $x\otimes t^{i}\in \mathfrak{i}$.

Conversely, if $\mathfrak{j}$ is an ideal of level $l+1$, define the ideal
$\mathfrak{i}$ of level $l$ as follows: $\mathfrak{i}$ is generated,
as a vector space, by all elements of the form $x\otimes t^{i}$ whenever
$x\in\mathfrak{g}$ is such that and $x\otimes t^{i+1}\in \mathfrak{j}$.

It is easy to check that these maps are mutually inverse bijections
between the sets of ideals of level $l$ and level $l+1$. By
construction, we also have 
$\mathrm{supp}(\mathfrak{j})=\mathrm{supp}(\mathfrak{i})+\delta$.
The claim follows.
\end{proof}

From Corollary~\ref{cor44} it follows that the number of equivalence
classes of support equivalent ideals is finite. It would be interesting
to have an explicit combinatorial formula for this number for all
untwisted affine Lie algebras.

\subsection{Support equivalent ideals in type $A$}\label{s4.3}

We go back to the special case $\hat{\mathfrak{g}}=
\hat{\mathfrak{sl}}_n$. From Proposition~\ref{prop51} it follows
that in order to understand supports of $\hat{\mathfrak{b}}$-ideal
in $\hat{\mathfrak{n}}_+$ it is enough to understand elements
of $\mathcal{S}_1$. We will need the following definition:
for a Dyck path $p\in\mathcal{D}_n$ denote by $\mathtt{m}(p)$
the set of all points $(2m,0)$, $m=1,2,\dots,n-1$, for which
\begin{displaymath}
\{(2m-2,0),(2m,0),(2m+2,0)\}\setminus 
(\mathtt{v}_{(0)}(p)\cup\{(0,0),(2n,0)\})
\neq\varnothing. 
\end{displaymath}

Recall that 
$\alpha_1,\dots,\alpha_{n-1}$ are the usual simple roots of 
$\mathfrak{sl}_n$ (the root $\alpha_i$ corresponds to the matrix
unit $e_{i,i+1}$). 
Let $\{h_1,\dots,h_{n-1}\}$ be the basis of $\mathfrak{h}$,
dual to $\{\alpha_1,\dots,\alpha_{n-1}\}$,
that is $\alpha_j(h_i)=\delta_{i,j}$.
With every ideal $\mathfrak{i}$ of level $1$ we associate the
quadruple $\Psi(\mathfrak{i})=(p,q,p',q')$ of Dyck paths as follows:
$p$ and $p'$ are associated to $\mathfrak{i}_+$ and $\mathfrak{i}'_+$
(or, rather, $\mathtt{a}_+(\mathfrak{i})$ and $\mathtt{a}'_+(\mathfrak{i})$),
respectively, and $q$ and $q'$ are associated to $\mathfrak{i}_-$ 
and $\mathfrak{i}'_-$ (or, rather, $\mathtt{a}_-(\mathfrak{i})$ 
and $\mathtt{a}'_-(\mathfrak{i})$), respectively, as described in Subsection~\ref{s3.3}. The quadruple $(p,q,p',q')$ defines the 
support equivalence class of the ideal
$\mathfrak{i}$ uniquely, however, not every quadruple of Dyck paths
appears as $\Psi(\mathfrak{i})$ for some $\mathfrak{i}$.
For $n=1$ we have a unique quadruple $(p,q,p',q')=
(\mathbf{p},\mathbf{p},\mathbf{p},\mathbf{p})$ and the
corresponding unique equivalence class of support equivalent ideals.
Our main result in this subsection is the following:

\begin{theorem}\label{thm55}
Assume that $n>1$. A quadruple $(p,q,p',q')$ of Dyck paths has the form
$\Psi(\mathfrak{i})$ for some $\hat{\mathfrak{b}}$-ideal
of level $1$ in $\hat{\mathfrak{n}}_+$ if and only if one of 
the following (mutually excluding) conditions is satisfied:
\begin{enumerate}[$($i$)$]
\item\label{thm55.1} $p=q'=\mathbf{p}$, $q=\mathbf{q}$ 
and $\mathbf{v}_{(0)}(p')=1$;
\item\label{thm55.2} $p=\mathbf{p}$, $q=\mathbf{q}$, 
$\mathbf{v}_{(0)}(p')>1$ and $q'\geq \overline{p'}$;
\item\label{thm55.3} $p=\mathbf{p}$, $q\neq\mathbf{q}$, 
$\mathtt{m}(q)\subset\mathbf{v}_{(0)}(p')$, $q'\geq \overline{p'}$
and, additionally, $q'=\mathbf{p}$ if $(2,0)\not\in
\mathtt{v}_{(0)}(q)$ or $(2n-2,0)\not\in
\mathtt{v}_{(0)}(q)$;
\item\label{thm55.4} $p\neq \mathbf{p}$, $q\geq \overline{p}$,
$\mathtt{m}(q)\cup \{(2,0),(2n-2,0)\}\subset\mathbf{v}_{(0)}(p')$, 
and $q'=\mathbf{p}$.
\end{enumerate}
\end{theorem}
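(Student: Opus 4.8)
The plan is to translate each of the algebraic conditions on a level-$1$ ideal $\mathfrak{i}$ into geometric conditions on the quadruple $\Psi(\mathfrak{i})=(p,q,p',q')$, using the matrix picture of Subsection~\ref{s3.3} together with Propositions~\ref{prop43} and~\ref{prop45}. The four sets $\mathtt{a}_+(\mathfrak{i})$, $\mathtt{a}_-(\mathfrak{i})$, $\mathtt{a}'_+(\mathfrak{i})$, $\mathtt{a}'_-(\mathfrak{i})$ sit at three consecutive $\delta$-levels ($(l-1)\delta$, $l\delta$, $(l+1)\delta$ with $l=1$), and by Proposition~\ref{prop43} the support of $\mathfrak{i}$ is completely determined by them. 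The admissibility of the quadruple is therefore equivalent to a list of closure conditions: each of the four sets must be an ideal-closed subset of the appropriate half of the root lattice (which is automatic from the construction of the Dyck paths), \emph{and} the four sets must be mutually compatible under the bracket of $\hat{\mathfrak{g}}$. So the real content is to enumerate all the bracket-compatibility constraints and check that they are exactly conditions \eqref{thm55.1}--\eqref{thm55.4}.

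The key steps, in order, would be the following. First I would record the ``vertical'' constraints coming from the action of $\hat{\mathfrak{g}}_{-\alpha+\delta}$ for $\alpha\in\Delta_+$, which move a root from level $k\delta$ to level $(k+1)\delta$ after subtracting a positive root: this forces, for instance, that $-\alpha_{\max}\in\mathtt{a}'_-(\mathfrak{i})$ whenever $\alpha_{\max}\in\mathtt{a}_+(\mathfrak{i})$ (Proposition~\ref{prop45}\eqref{prop45.1}), which in path language is exactly the statement that $p\neq\mathbf{p}\Rightarrow q'=\mathbf{p}$. Running through all such brackets systematically yields the case split on whether $p=\mathbf{p}$ or not, and whether $q=\mathbf{q}$ or not. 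Second, the ``$\overline{p}\leq q$''-type inequalities are governed by Lemma~\ref{lem9} (in its reformulation $\overline{p}\leq q \iff (p,q)\in\mathfrak{P}_n(k,m)$) applied to the pairs $(p',q')$ and $(p,q)$ at the relevant levels; this produces the conditions $q\geq\overline p$ and $q'\geq\overline{p'}$. Third, the role of the set $\mathtt{m}(q)$ must be identified: a point $(2m,0)$ lies in $\mathtt{m}(q)$ precisely when $q$ fails to have a height-$0$ valley at $m$ or at an immediate neighbour, which in root-theoretic terms says that bracketing $\hat{\mathfrak{g}}_{l\delta}$-type elements (manufactured from $\mathtt{a}_-(\mathfrak{i})$) against the $e_{m,m+1}$-direction lands inside $\mathfrak{i}'_+$; hence $\mathtt{m}(q)\subset\mathbf{v}_{(0)}(p')$ is forced. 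Fourth, one must separately handle the boundary simple roots $\alpha_1$ and $\alpha_{n-1}$ (equivalently the points $(2,0)$ and $(2n-2,0)$), since they behave differently at the extremes — this is the source of the extra clauses ``additionally $q'=\mathbf p$ if \dots'' in \eqref{thm55.3} and of $\{(2,0),(2n-2,0)\}\subset\mathbf{v}_{(0)}(p')$ in \eqref{thm55.4}. Finally I would check that the four displayed cases are mutually exclusive (this is clear from the dichotomies on $p$, $q$, $p'$) and that every quadruple satisfying one of them really does arise, by exhibiting the ideal with the prescribed support and invoking Proposition~\ref{prop43} to see it is well-defined and Proposition~\ref{prop41} to see it is a genuine ideal of finite codimension.

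The main obstacle, I expect, will be the bookkeeping in the third and fourth steps: pinning down precisely which bracket relations in $\hat{\mathfrak{sl}}_n$ are forced once $\mathfrak{i}_{\delta}$ is partially (or fully) filled, and checking that these are captured \emph{exactly} — no more, no less — by the condition $\mathtt{m}(q)\subset\mathbf{v}_{(0)}(p')$ together with the boundary clauses. Because an element $h\otimes t$ of the Cartan part at level $\delta$ acts on $\hat{\mathfrak{g}}_{\beta+\delta}$ by the scalar $\beta(h)$, the filled region of $\mathfrak{i}_\delta$ (encoded by $p'$) must be large enough that \emph{some} $h$ in it pairs nontrivially with every simple root that appears in a bracket forced by $\mathtt{a}_-(\mathfrak{i})$; translating ``some $h$'' into a condition on valleys of $p'$ relative to the combinatorial data $\mathtt m(q)$ of $q$ is where the argument is most delicate, and is where I would spend the bulk of the case-by-case verification. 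The degenerate cases $p=\mathbf p$, $q=\mathbf q$ (ideal supported essentially on imaginary roots and a thin slice) need to be peeled off first, as in \eqref{thm55.1}--\eqref{thm55.2}, so that the generic argument in \eqref{thm55.3}--\eqref{thm55.4} is not cluttered by them.
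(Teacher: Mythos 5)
Your overall strategy does match the paper's: you correctly predict the case split on $p$ versus $\mathbf{p}$ and $q$ versus $\mathbf{q}$, you identify that Lemma~\ref{lem9} (reformulated as $q\geq\overline{p}$, $q'\geq\overline{p'}$) supplies the two path-inequalities, and you see that $\mathtt{m}(q)$ encodes which positive simple directions must be hit when bracketing level-$\delta$ Cartan elements against the $\mathtt{a}_-$-slice. However, there is a genuine gap in the sufficiency direction, and a misuse of the cited propositions. You propose to verify that a candidate subspace is an ideal by ``invoking Proposition~\ref{prop43} to see it is well-defined and Proposition~\ref{prop41} to see it is a genuine ideal of finite codimension''; but Proposition~\ref{prop41} describes properties of an object already assumed to be an ideal, and Proposition~\ref{prop43} records the form the support \emph{must} take \emph{given} that $\mathfrak{i}$ is an ideal. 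Neither result tells you that a subspace you have written down is closed under $[\hat{\mathfrak{b}},-]$. The paper instead explicitly builds $\mathfrak{j}$ as a direct sum of indicated root spaces together with a carefully chosen Cartan part at level $\delta$, then verifies closure under the bracket directly, case by case.

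That last step is where the actual work lies and your plan does not address it. In case~\eqref{thm55.1} the Cartan part must be exactly $\mathbb{C}\,(h_m\otimes t)$ with $(2m,0)$ the unique height-$0$ valley of $p'$, and one must check that $\alpha_{\mathrm{max}}(h_m)\neq 0$ forces $q'=\mathbf{p}$. In case~\eqref{thm55.2} the correct Cartan part is spanned by $(h_m-h_k)\otimes t$ for distinct height-$0$ valleys $(2m,0),(2k,0)$ of $p'$, and the closure computation depends on the observation that $\beta(h_m-h_k)=0$ for every $\beta\in\Delta_-$ whose box lies south-west of $\overline{p'}$; without that vanishing, bracketing the Cartan part against $\mathtt{a}_-$-roots would escape the claimed $\mathfrak{i}'_+$. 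In cases~\eqref{thm55.3} and~\eqref{thm55.4} one additionally has to include the spaces $[\hat{\mathfrak{g}}_{\alpha+\delta},\hat{\mathfrak{g}}_{-\alpha}]$ (the Cartan elements generated by the $\mathtt{a}_-$-slice itself), identify the maximal Dyck path $w$ with $\mathtt{m}(q)\subset\mathbf{v}_{(0)}(w)$, and show $p'\leq w$ implies the relevant bracket images land in $\mathfrak{i}'_+\oplus\mathfrak{a}$. You flag all of this as ``bookkeeping'' you expect to be delicate, but the proposal does not supply it, and the shortcut you suggest (appealing to Propositions~\ref{prop41} and~\ref{prop43}) cannot replace it.
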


\begin{proof}
If $\Psi(\mathfrak{i})=(\mathbf{p},\mathbf{q},p',q')$, then 
$h\otimes t\in \mathfrak{i}$ for some nonzero $h\in\mathfrak{h}$
and hence $\alpha_i(h)\neq 0$ for some $i$, implying
$\alpha_i\in\mathtt{a}'_+(\mathfrak{i})$ and so 
$\mathtt{v}_{(0)}(p')\neq\varnothing$.
Assume first that $\Psi(\mathfrak{i})=(p,q,p',q')$ and that 
\begin{equation}\label{eqn1}
p=\mathbf{p},\, q=\mathbf{q},\,\mathtt{v}_{(0)}(p')=\{(2m,0)\},
m\in\{1,2,\dots,n-1\}.
\end{equation}
Then we have $\mathfrak{i}_+=\mathfrak{i}_-=0$,
moreover, $\mathfrak{i}_{\delta}$ has to be one-dimensional 
and generated by $h_m\otimes t$. As 
$\alpha_{\mathrm{max}}(h_m)=1\neq 0$, it follows that 
$-\alpha_{\mathrm{max}}\in \mathtt{a}'_-(\mathfrak{i})$,
which implies $q'=\mathbf{p}$. On the other hand, given 
$(p,q,p',q')$ satisfying \eqref{thm55.1}, let $\mathfrak{j}$
denote the subspace of $\hat{\mathfrak{n}}_+$, spanned
by $h_m\otimes t$ and all $\hat{\mathfrak{g}}_{\alpha}$
such that $\alpha\in D_i$, $i>1$, or $\alpha\in D_1$ 
is such that $\alpha$ does not have the form 
$\beta+\delta$ for some $\beta\in\Delta_+$ for which the 
corresponding box is to the north-east of  $p'$ 
in the matrix setup of Subsection~\ref{s3.3} (see Figure~\ref{fig2}).
It is easy to see that $\mathfrak{j}$ is an ideal
and that $\Psi(\mathfrak{j})=(p,q,p',q')$, which
proves our theorem for quadruples satisfying
\eqref{eqn1}. The number of such quadruples equals the number
of Dyck paths of semilength $n$ having a unique valley of height $0$. 
It is well-known that this number is $C_{n-1}$.

Assume now that $\Psi(\mathfrak{i})=(p,q,p',q')$ and that
\begin{equation}\label{eqn2}
p=\mathbf{p},\, q=\mathbf{q},\,\mathbf{v}_{(0)}(p')>1.
\end{equation}
Then from the proof of Lemma~\ref{lem9} 
we get $q'\geq \overline{p'}$. 
On the other hand,
assume that $(p,q,p',q')$ satisfies \eqref{thm55.2}.
Let $(2m,0)$ and $(2k,0)$ be two different valleys of
$p'$. Denote by $\mathfrak{j}$ the subspace of $\hat{\mathfrak{n}}_+$, 
spanned by $(h_m-h_k)\otimes t$ and all $\hat{\mathfrak{g}}_{\alpha}$
such that $\alpha\in D_i$, $i>1$, or $\alpha\in D_1$ 
is such that  either  
$\alpha=\beta+\delta$ for some $\beta\in\Delta_+$ for which the 
corresponding box is to the north-east of $p'$ 
in the matrix setup of Subsection~\ref{s3.3} 
or $\alpha=\beta+2\delta$ 
for some $\beta\in\Delta_-$ for which the 
corresponding box is to the north-east of $q'$.
Note that if $\beta\in \Delta_-$ corresponds to a box lying to
the south-west of $\overline{p}$, then $\beta(h_m-h_k)=0$.
Using this it is easy to check that $\mathfrak{j}$ is an ideal
and  $\Psi(\mathfrak{j})=(p,q,p',q')$, which
proves our theorem for quadruples satisfying \eqref{eqn2}.

Assume now that $\Psi(\mathfrak{i})=(p,q,p',q')$ and that
\begin{equation}\label{eqn3}
p=\mathbf{p},\, q\neq \mathbf{q}.
\end{equation}
From the proof of Lemma~\ref{lem9} we get $\overline{p'}\leq q'$. 
If $(2m,0)\not\in\mathtt{v}_{(0)}(q)$ 
for some $m\in\{1,\dots,n-1\}$, then $\hat{\mathfrak{g}}_{-\alpha_m+\delta}\subset
\mathfrak{i}$. Applying twice the adjoint action of
$\hat{\mathfrak{g}}_{\alpha_m}$ we get that 
$\hat{\mathfrak{g}}_{\alpha_m+\delta}\subset
\mathfrak{i}$, which yields 
$(2m,0)\in \mathtt{v}_{(0)}(p')$.
If $(2m-2,0)\not\in\mathtt{v}_{(0)}(q)$ 
for some $m\in\{2,\dots,n-1\}$, then $\hat{\mathfrak{g}}_{-\alpha_{m-1}+\delta}\subset
\mathfrak{i}$. Commuting this with
$\hat{\mathfrak{g}}_{\alpha_{m-1}}$ and then with
$\hat{\mathfrak{g}}_{\alpha_{m}}$ we get that 
$\hat{\mathfrak{g}}_{\alpha_m+\delta}\subset
\mathfrak{i}$, which yields that $(2m,0)\in \mathtt{v}_{(0)}(p')$.
If $(2m+2,0)\not\in\mathtt{v}_{(0)}(q)$ 
for some $m\in\{1,\dots,n-2\}$, then $\hat{\mathfrak{g}}_{-\alpha_{m+1}+\delta}\subset
\mathfrak{i}$. Commuting this with
$\hat{\mathfrak{g}}_{\alpha_{m+1}}$ and then with
$\hat{\mathfrak{g}}_{\alpha_{m}}$ we get that 
$\hat{\mathfrak{g}}_{\alpha_m+\delta}\subset
\mathfrak{i}$, which yields that $(2m,0)\in \mathtt{v}_{(0)}(p')$.
If $(2,0)\not\in\mathtt{v}_{(0)}(q)$, then $\mathfrak{i}$
contains $\hat{\mathfrak{g}}_{-\alpha_{1}+\delta}$ and hence
also 
\begin{displaymath}
[\hat{\mathfrak{g}}_{-\alpha_{1}+\delta},
\hat{\mathfrak{g}}_{-\alpha_{\mathrm{max}}+\alpha_{1}+\delta}]=
\hat{\mathfrak{g}}_{-\alpha_{\mathrm{max}}+2\delta},
\end{displaymath}
which implies $q'=\mathbf{p}$. Similarly for
$(2n-2,0)\not\in\mathtt{v}_{(0)}(q)$. This establishes necessity
of condition \eqref{thm55.3}.

On the other hand, to prove sufficiency
assume that $(p,q,p',q')$ satisfies \eqref{thm55.3}.
Denote by $\mathfrak{j}$ the subspace of $\hat{\mathfrak{n}}_+$, 
spanned by 
\begin{itemize}
\item the space $V_1$ generated by 
$\hat{\mathfrak{g}}_{\alpha+\delta}$ for every 
$\alpha\in \Delta_-$ corresponding to a box lying to the north-east 
of $q$ in the matrix setup of Subsection~\ref{s3.3};
\item the space $V_2$ generated by $[\hat{\mathfrak{g}}_{\alpha+\delta},
\hat{\mathfrak{g}}_{-\alpha}]$ for every $\alpha\in \Delta_-$
corresponding to a box lying to the north-east of $q$;
\item the space $V_3$ generated by 
$\hat{\mathfrak{g}}_{\alpha+\delta}$ for every 
$\alpha\in \Delta_+$ corresponding to a box lying to the north-east 
of $p'$;
\item the space $V_4$ generated by  
$\hat{\mathfrak{g}}_{\alpha+2\delta}$ for every 
$\alpha\in \Delta_-$ corresponding to a box lying to the north-east 
of $q'$;
\item the space $V_5$ generated by $\hat{\mathfrak{g}}_{2\delta}$
and all  $\hat{\mathfrak{g}}_{\beta}$,
$\beta\in D_i$, $i>1$.
\end{itemize}
We claim that  $\mathfrak{j}$ is an ideal. The essential combinatorics 
of the proof is shown in Figure~\ref{fig11}. 
That $V_3\oplus V_4\oplus V_5$ is an ideal follows from the proof
of Lemma~\ref{lem9}. By construction of $V_2$, the adjoint action of
$\mathfrak{n}_+$ applied to $V_1$ or $V_1\oplus V_2$ generates
the same subspace of $\hat{\mathfrak{g}}$, call it $X$. Let
$\mathtt{a}(X)$ be the set of all $\alpha\in\Delta_+$
such that $\hat{\mathfrak{g}}_{\alpha+\delta}\subset X$.
Then $\mathtt{a}(X)$ is the support of an ad-nilpotent ideal
in $\mathfrak{b}$. Let $w$ be the corresponding Dyck path.
Using a computation similar to the one used in
the previous paragraph one checks that
$w$ is the maximum (with respect to $\leq$) path satisfying 
$\mathtt{m}(q)\subset\mathbf{v}_{(0)}(w)$
(this is shown by the line $l$ in Figure~\ref{fig11}). As
$\mathtt{m}(q)\subset\mathbf{v}_{(0)}(p')$, we have $p'\leq w$
and it follows that $V_1\oplus V_2\oplus V_3$ is stable under the 
adjoint action of $\mathfrak{b}$. It remains to show that 
for $\alpha\in\Delta_-$ the adjoint action of 
$\hat{\mathfrak{g}}_{\alpha+\delta}$
maps $V_1$ to $V_4\oplus V_5$. This is clear in the cases
$(2,0)\in\mathtt{v}_{(0)}(q)$ or $(2n-2,0)\in\mathtt{v}_{(0)}(q)$.
In other cases this follows from Lemma~\ref{lem9} 
using the condition $\overline{p'}\leq q'$. 
By construction, $\Psi(\mathfrak{j})=(p,q,p',q')$.
This proves 
our theorem for all quadruples  satisfying \eqref{eqn3}.

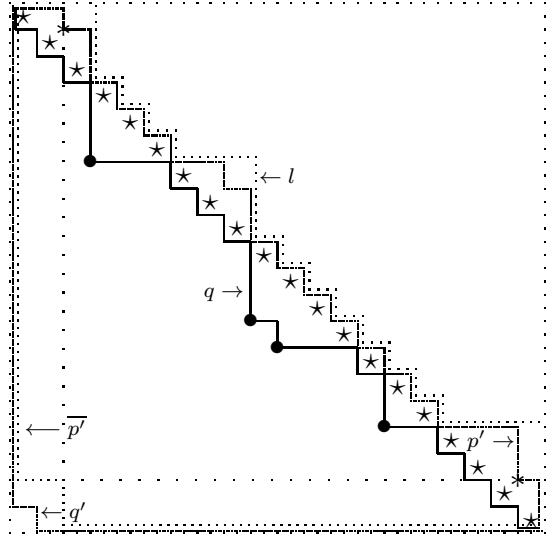
\begin{figure}
\begin{picture}(220.00,220.00)
\dottedline{5}(10.00,10.00)(10.00,210.00)
\dottedline{5}(210.00,210.00)(10.00,210.00)
\dottedline{5}(210.00,210.00)(210.00,10.00)
\dottedline{5}(10.00,10.00)(210.00,10.00)
%%%%%%%%%%%%%%%%%%%%%%%%%%%%%%%%%%
\put(15.00,205.00){\makebox(0,0)[cc]{$\star$}}
\put(25.00,195.00){\makebox(0,0)[cc]{$\star$}}
\put(35.00,185.00){\makebox(0,0)[cc]{$\star$}}
\put(45.00,175.00){\makebox(0,0)[cc]{$\star$}}
\put(55.00,165.00){\makebox(0,0)[cc]{$\star$}}
\put(65.00,155.00){\makebox(0,0)[cc]{$\star$}}
\put(75.00,145.00){\makebox(0,0)[cc]{$\star$}}
\put(85.00,135.00){\makebox(0,0)[cc]{$\star$}}
\put(95.00,125.00){\makebox(0,0)[cc]{$\star$}}
\put(105.00,115.00){\makebox(0,0)[cc]{$\star$}}
\put(115.00,105.00){\makebox(0,0)[cc]{$\star$}}
\put(125.00,95.00){\makebox(0,0)[cc]{$\star$}}
\put(135.00,85.00){\makebox(0,0)[cc]{$\star$}}
\put(145.00,75.00){\makebox(0,0)[cc]{$\star$}}
\put(155.00,65.00){\makebox(0,0)[cc]{$\star$}}
\put(165.00,55.00){\makebox(0,0)[cc]{$\star$}}
\put(175.00,45.00){\makebox(0,0)[cc]{$\star$}}
\put(185.00,35.00){\makebox(0,0)[cc]{$\star$}}
\put(195.00,25.00){\makebox(0,0)[cc]{$\star$}}
\put(205.00,15.00){\makebox(0,0)[cc]{$\star$}}
%%%%%%%%%%%%%%%%%%%%%%%%%%%%%%%%%%
\drawline(12.00,208.00)(12.00,200.00)
\drawline(20.00,200.00)(12.00,200.00)
\drawline(20.00,200.00)(20.00,190.00)
\drawline(30.00,190.00)(20.00,190.00)
\drawline(30.00,190.00)(30.00,180.00)
\drawline(40.00,180.00)(30.00,180.00)
\drawline(40.00,180.00)(40.00,150.00)
\drawline(70.00,150.00)(40.00,150.00)
\drawline(70.00,150.00)(70.00,140.00)
\drawline(80.00,140.00)(70.00,140.00)
\drawline(80.00,140.00)(80.00,130.00)
\drawline(90.00,130.00)(80.00,130.00)
\drawline(90.00,130.00)(90.00,120.00)
\drawline(100.00,120.00)(90.00,120.00)
\drawline(100.00,120.00)(100.00,90.00)
\drawline(100.00,120.00)(100.00,90.00)
\drawline(110.00,90.00)(100.00,90.00)
\drawline(110.00,90.00)(110.00,80.00)
\drawline(140.00,80.00)(110.00,80.00)
\drawline(140.00,80.00)(140.00,70.00)
\drawline(150.00,70.00)(140.00,70.00)
\drawline(150.00,70.00)(140.00,70.00)
\drawline(150.00,70.00)(150.00,50.00)
\drawline(170.00,50.00)(150.00,50.00)
\drawline(170.00,50.00)(170.00,40.00)
\drawline(180.00,40.00)(170.00,40.00)
\drawline(180.00,40.00)(180.00,30.00)
\drawline(190.00,30.00)(180.00,30.00)
\drawline(190.00,30.00)(190.00,20.00)
\drawline(200.00,20.00)(190.00,20.00)
\drawline(200.00,20.00)(200.00,12.00)
\drawline(208.00,12.00)(200.00,12.00)
%%%%%%%%%%%%%%%%%%%%%%%%%%%%%%%
\dottedline{1}(12.00,208.00)(30.00,208.00)
\dottedline{1}(30.00,200.00)(30.00,208.00)
\dottedline{1}(30.00,200.00)(40.00,200.00)
\dottedline{1}(40.00,180.00)(40.00,200.00)
\dottedline{1}(40.00,180.00)(50.00,180.00)
\dottedline{1}(50.00,170.00)(50.00,180.00)
\dottedline{1}(50.00,170.00)(60.00,170.00)
\dottedline{1}(60.00,160.00)(60.00,170.00)
\dottedline{1}(60.00,160.00)(70.00,160.00)
\dottedline{1}(70.00,150.00)(70.00,160.00)
\dottedline{1}(70.00,150.00)(90.00,150.00)
\dottedline{1}(90.00,140.00)(90.00,150.00)
\dottedline{1}(90.00,140.00)(100.00,140.00)
\dottedline{1}(100.00,120.00)(100.00,140.00)
\dottedline{1}(100.00,120.00)(110.00,120.00)
\dottedline{1}(110.00,110.00)(110.00,120.00)
\dottedline{1}(110.00,110.00)(120.00,110.00)
\dottedline{1}(120.00,100.00)(120.00,110.00)
\dottedline{1}(120.00,100.00)(130.00,100.00)
\dottedline{1}(130.00,90.00)(130.00,100.00)
\dottedline{1}(130.00,90.00)(140.00,90.00)
\dottedline{1}(140.00,80.00)(140.00,90.00)
\dottedline{1}(140.00,80.00)(150.00,80.00)
\dottedline{1}(150.00,70.00)(150.00,80.00)
\dottedline{1}(150.00,70.00)(160.00,70.00)
\dottedline{1}(160.00,60.00)(160.00,70.00)
\dottedline{1}(160.00,60.00)(170.00,60.00)
\dottedline{1}(170.00,50.00)(170.00,60.00)
\dottedline{1}(170.00,50.00)(200.00,50.00)
\dottedline{1}(200.00,30.00)(200.00,50.00)
\dottedline{1}(200.00,30.00)(208.00,30.00)
\dottedline{1}(208.00,12.00)(208.00,30.00)
%%%%%%%%%%%%%%%%%%%%%%%%%%%%%%%
\dottedline{1}(11.00,209.00)(11.00,20.00)
\dottedline{1}(20.00,20.00)(11.00,20.00)
\dottedline{1}(20.00,20.00)(20.00,11.00)
\dottedline{1}(209.00,11.00)(20.00,11.00)
%%%%%%%%%%%%%%%%%%%%%%%%%%%%%%%%%%
\dottedline{7}(30.00,200.00)(30.00,30.00)
\dottedline{7}(200.00,30.00)(30.00,30.00)
\dottedline{3}(13.00,208.00)(13.00,30.00)
\dottedline{3}(30.00,30.00)(13.00,30.00)
\dottedline{3}(30.00,30.00)(30.00,13.00)
\dottedline{3}(208.00,13.00)(30.00,13.00)
%%%%%%%%%%%%%%%%%%%%%%%%%%%%%%%%%%
\dottedline{3}(42.00,209.00)(42.00,182.00)
\dottedline{3}(52.00,182.00)(42.00,182.00)
\dottedline{3}(52.00,182.00)(52.00,172.00)
\dottedline{3}(62.00,172.00)(52.00,172.00)
\dottedline{3}(62.00,172.00)(62.00,162.00)
\dottedline{3}(72.00,162.00)(62.00,162.00)
\dottedline{3}(72.00,162.00)(72.00,152.00)
\dottedline{3}(102.00,152.00)(72.00,152.00)
\dottedline{3}(102.00,152.00)(102.00,122.00)
\dottedline{3}(112.00,122.00)(102.00,122.00)
\dottedline{3}(112.00,122.00)(112.00,112.00)
\dottedline{3}(122.00,112.00)(112.00,112.00)
\dottedline{3}(122.00,112.00)(122.00,102.00)
\dottedline{3}(132.00,102.00)(122.00,102.00)
\dottedline{3}(132.00,102.00)(132.00,92.00)
\dottedline{3}(142.00,92.00)(132.00,92.00)
\dottedline{3}(142.00,92.00)(142.00,82.00)
\dottedline{3}(152.00,82.00)(142.00,82.00)
\dottedline{3}(152.00,82.00)(152.00,72.00)
\dottedline{3}(162.00,72.00)(152.00,72.00)
\dottedline{3}(162.00,72.00)(162.00,62.00)
\dottedline{3}(172.00,62.00)(162.00,62.00)
\dottedline{3}(172.00,62.00)(172.00,52.00)
\dottedline{3}(208.00,52.00)(172.00,52.00)
%%%%%%%%%%%%%%%%%%%%%%%%%%%%%%%%%%
\put(90.00,100.00){\makebox(0,0)[cc]{\tiny $q\rightarrow$}}
\put(190.00,45.00){\makebox(0,0)[cc]{\tiny $p'\rightarrow$}}
\put(30.00,18.00){\makebox(0,0)[cc]{\tiny $\leftarrow q'$}}
\put(27.00,50.00){\makebox(0,0)[cc]{\tiny $\longleftarrow \overline{p'}$}}
\put(110.00,145.00){\makebox(0,0)[cc]{\tiny $\leftarrow l$}}
\put(30.00,200.00){\makebox(0,0)[cc]{$\ast$}}
\put(200.00,30.00){\makebox(0,0)[cc]{$\ast$}}
\put(40.00,150.00){\makebox(0,0)[cc]{$\bullet$}}
\put(100.00,90.00){\makebox(0,0)[cc]{$\bullet$}}
\put(110.00,80.00){\makebox(0,0)[cc]{$\bullet$}}
\put(150.00,50.00){\makebox(0,0)[cc]{$\bullet$}}
\end{picture}
\caption{Combinatorics of Theorem~\ref{thm55}\eqref{thm55.3}: 
the path $p'$ must 
contain the area on the upper right side of $l$
(which is determined by $\bullet$ of  $q$) and the path $q'$ 
must contain the area on the upper right side of $\overline{p'}$
(which is determined by $\ast$ of  $p'$)}\label{fig11} 
\end{figure}

Finally, assume that $\Psi(\mathfrak{i})=(p,q,p',q')$ and that
\begin{equation}\label{eqn4}
p\neq \mathbf{p}.
\end{equation}
Then $\overline{p}\leq q$ by Lemma~\ref{lem9} and
$q'=\mathbf{p}$ by Proposition~\ref{prop45}\eqref{prop45.1}.
Further, $\hat{\mathfrak{g}}_{\alpha_{\mathrm{max}}}\subset \mathfrak{i}$,
which implies that 
\begin{displaymath}
[\hat{\mathfrak{g}}_{\alpha_{\mathrm{max}}},
\hat{\mathfrak{g}}_{-\alpha_{\mathrm{max}}+\alpha_{1}+\delta}]= 
\hat{\mathfrak{g}}_{\alpha_{1}+\delta}\subset \mathfrak{i} 
\end{displaymath}
and
\begin{displaymath}
[\hat{\mathfrak{g}}_{\alpha_{\mathrm{max}}},
\hat{\mathfrak{g}}_{-\alpha_{\mathrm{max}}+\alpha_{n-1}+\delta}]= 
\hat{\mathfrak{g}}_{\alpha_{n-1}+\delta}\subset \mathfrak{i}.
\end{displaymath}
Hence $\{(2,0),(2n-2,0)\}\subset \mathtt{v}_{(0)}(p')$.
That $\mathtt{m}(q)\subset \mathtt{v}_{(0)}(p')$
is proved similarly to the previous case.
This establishes necessity of \eqref{thm55.4}.

To prove sufficiency
assume that $(p,q,p',q')$ satisfies \eqref{thm55.4}.
Denote by $\mathfrak{j}$ the subspace of $\hat{\mathfrak{n}}_+$, 
spanned by 
\begin{itemize}
\item the space $V_1$ generated by 
$\hat{\mathfrak{g}}_{\alpha}$ for every 
$\alpha\in \Delta_+$ corresponding to a box lying to the north-east 
of $p$ in the matrix setup of Subsection~\ref{s3.3};
\item the space $V_2$ generated by 
$\hat{\mathfrak{g}}_{\alpha+\delta}$ for every 
$\alpha\in \Delta_-$ corresponding to a box lying to the north-east 
of $q$;
\item the space $V_3$ generated by $[\hat{\mathfrak{g}}_{\alpha+\delta},
\hat{\mathfrak{g}}_{-\alpha}]$ for every $\alpha\in \Delta_-$
corresponding to a box lying to the north-east of $q$ and
by $[\hat{\mathfrak{g}}_{-\alpha+\delta},
\hat{\mathfrak{g}}_{\alpha}]$ for every $\alpha\in \Delta_+$
corresponding to a box lying to the north-east of $p$;
\item the space $V_4$ generated by 
$\hat{\mathfrak{g}}_{\alpha+\delta}$ for every 
$\alpha\in \Delta_+$ corresponding to a box lying to the north-east 
of $p'$;
\item the space $V_5$ generated by 
$\hat{\mathfrak{g}}_{\beta}$, $\beta\in D_i$, $i>1$, or
$\beta\in D_1$, $\beta\neq \gamma+\delta$, $\gamma\in\Delta_+$.
\end{itemize}
Similarly to the previous case one shows that $\mathfrak{j}$ 
is an ideal. By construction, $\Psi(\mathfrak{j})=(p,q,p',q')$. This proves 
our theorem for all quadruples  satisfying \eqref{eqn4}
and completes the proof.
\end{proof}

For small values of $n$ the number of equivalence classes of
support equivalent ideals for $\hat{\mathfrak{sl}}_n$ 
(obtained by a direct calculation) is as follows:
\begin{displaymath}
1,\,4,\,21,\,100,\,455,\dots 
\end{displaymath}
This sequence again seems to be new and does not appear in \cite{OEIS}.

\vspace{1cm}

\noindent
K.B.: Department of Mathematics, ETH Zurich, Raemistrasse 101, CH-8092 Zurich, Switzerland; e-mail: {\tt baur\symbol{64}math.ethz.ch}
\vspace{0.5cm}

\noindent
V.M.: Department of Mathematics, Uppsala University, 
Box 480, SE-751 06, Uppsala, Sweden; e-mail: {\tt mazor\symbol{64}math.uu.se}
\vspace{0.5cm}


\begin{thebibliography}{999999}
\bibitem[An]{An} D.~Andr{\'e}; Solution directe du 
probl{\`e}me r{\'e}solu par M. Bertrand, Comptes Rendus 
de l'Acad{\'e}mie des Sciences, Paris {\bf 105} (1887), 
436--437.
\bibitem[AKOP]{AKOP} G.~Andrews, C.~Krattenthaler, 
L.~Orsina, P.~Papi; {\em ad}-Nilpotent $\mathfrak{b}$-ideals
in ${\mathfrak sl}(n)$ having a fixed class of nilpotence:
combinatorics and enumeration. Trans. Amer. Math. Soc. 
{\bf 354} (2002), no. 10, 3835--3853.
\bibitem[Ba]{Ba} D.~Bailey; Counting Arrangements of 1's 
and -1's. Math. Mag. {\bf 69} (1996), 128--131. 
\bibitem[Ca]{Ca} R.~Carter; Lie algebras of finite and 
affine type. Cambridge Studies in Advanced Mathematics,
{\bf 96}. Cambridge University Press, Cambridge, 2005. 
\bibitem[CP1]{CP1} P.~Cellini, P.~Papi; {\em ad}-Nilpotent 
ideals of a Borel subalgebra. J. Algebra {\bf 225} 
(2000), no. 1, 130--141.
\bibitem[CP2]{CP2} P.~Cellini, P.~Papi; {\em ad}-Nilpotent 
ideals of a Borel subalgebra. II. Special issue in 
celebration of Claudio Procesi's 60th birthday. J. 
Algebra {\bf 258} (2002), no. 1, 112--121.
\bibitem[CDR]{CDR} V.~Chari, R.~Dolbin, T.~Ridenour; Ideals 
in parabolic subalgebras of simple Lie algebras. Symmetry in 
mathematics and physics, 47–60, Contemp. Math., {\bf 490}, 
Amer. Math. Soc., Providence, RI, 2009. 
\bibitem[De]{De} E.~Deutsch; Dyck path enumeration. 
Discrete Math. {\bf 204} (1999), no. 1-3, 167--202.
\bibitem[Gr]{Gr} R.~Grimaldi; Discrete and Combinatorial
Mathematics: An Applied Introduction, 
Addison-Wesley, 2003.
\bibitem[Hu]{Hu} J.~Humphreys; Introduction to Lie 
algebras and representation theory. Graduate Texts in 
Mathematics, Vol. {\bf 9}. Springer-Verlag, New York-Berlin, 
1972.
\bibitem[KOP]{KOP} C.~Krattenthaler, L.~Orsina, P.~Papi;
Enumeration of {\em ad}-nilpotent $\mathfrak{b}$-ideals 
for simple Lie algebras. Special issue in memory of 
Rodica Simion. Adv. in Appl. Math. {\bf 28} (2002), no. 
3-4, 478--522.
\bibitem[Pa]{Pa} D.~Panyushev; {\em ad}-Nilpotent ideals 
of a Borel subalgebra: generators and duality. J. 
Algebra {\bf 274} (2004), no. 2, 822--846.
\bibitem[Sl]{OEIS} N.~Sloane; The online encyclopedia 
of integer sequences. http://oeis.org/
\bibitem[St]{St} R.~Stanley; Enumerative combinatorics. 
Vol. 2. Cambridge Studies in Advanced Mathematics, {\bf 62}. Cambridge University Press, Cambridge, 1999.
\end{thebibliography}
\end{document}